\newcommand{\bdisp}{\begin{displaystyle}}
\newcommand{\edisp}{\end{displaystyle}}
\newcommand{\Exp}{\operatorname*{\mathbb{E}}}
\newcommand{\from}{\leftarrow}
\newcommand{\Normal}{\mathcal{N}}
\renewcommand{\DH}{d_\mathrm{H}}
\newcommand{\Real}{\mathbb{R}}
\renewcommand{\d}{\mathrm{d}}
\newcommand{\tr}{\mathrm{tr}}
\newcommand{\eps}{\epsilon}
\newcommand{\cP}{\mathcal{P}}
\newcommand{\ld}{{\log\frac{1}{\delta}}}
\newcommand{\muhat}{\hat{\mu}}
\crefname{lemma}{lemma}{lemmas}
\crefname{claim}{claim}{claims}
\crefname{theorem}{theorem}{theorems}
\crefname{proposition}{proposition}{propositions}
\crefname{corollary}{corollary}{corollaries}
\crefname{claim}{claim}{claims}
\crefname{remark}{remark}{remarks}
\crefname{definition}{definition}{definitions}
\crefname{fact}{fact}{facts}
\crefname{question}{question}{questions}
\crefname{condition}{condition}{conditions}
\crefname{algorithm}{algorithm}{algorithms}
\crefname{assumption}{assumption}{assumptions}
\crefname{notation}{notation}{notation}
  \newtheorem{theorem}{Theorem}
  \newtheorem{fact}{Fact}
  \newtheorem{lemma}[theorem]{Lemma}
  \newtheorem{proposition}[theorem]{Proposition}
  \newtheorem{corollary}[theorem]{Corollary}
  \newtheorem{definition}[theorem]{Definition}
\newcommand{\qedsymbol}{\square}
 \newenvironment{proof}[1][\unskip]%
  {%
   \par\noindent{\bfseries Proof\ #1.\ }%
  }%
  {%
   \hfill $\qedsymbol$%
  }%
\title{Optimality in Mean Estimation:\\ Beyond Worst-Case, Beyond Sub-Gaussian,\\ and Beyond $1+\alpha$ Moments}
\author{
  Trung Dang \\
  The University of Texas at Austin \\
  \texttt{dddtrung@cs.utexas.edu} \\
  \And
  Jasper C.H. Lee \\
  University of Wisconsin-Madison \\
  \texttt{jasper.lee@wisc.edu} \\
  \And
  Maoyuan Song \\
  Purdue University \\
  \texttt{maoyuanrs@gmail.com} \\
  \And
  Paul Valiant \\
  Purdue University \\
  \texttt{pvaliant@gmail.com} \\
}
\begin{document}

\maketitle

\begin{abstract}
   There is growing interest in improving our algorithmic understanding of fundamental statistical problems such as mean estimation, driven by the goal of understanding the fundamental limits of what we can extract from limited and valuable data.
The state of the art results for mean estimation in $\mathbb{R}$ are 1) the optimal sub-Gaussian mean estimator by [Lee and Valiant, 2022], attaining the optimal sub-Gaussian error constant for all distributions with finite but unknown variance, and 2) the analysis of the median-of-means algorithm by [Bubeck, Cesa-Bianchi and Lugosi, 2013] and a matching lower bound by [Devroye, Lerasle, Lugosi, and Oliveira, 2016], characterizing the big-O optimal errors for distributions that have tails heavy enough that only a $1+\alpha$ moment exists for some $\alpha \in (0,1)$.
Both of these results, however, are optimal only in the worst case.
Motivated by the recent effort in the community to go ``beyond the worst-case analysis'' of algorithms, we initiate the fine-grained study of the mean estimation problem:
Is it possible for algorithms to leverage \emph{beneficial} features/quirks of their input distribution to \emph{beat} the sub-Gaussian rate, without explicit knowledge of these features?

We resolve this question, finding an unexpectedly nuanced answer: ``Yes in limited regimes, but in general no''.
Given a distribution $p$, assuming \emph{only} that it has a finite mean and absent any additional assumptions,
we show how to construct a distribution $q_{n,\delta}$ such that the means of $p$ and $q$ are well-separated, yet $p$ and $q$ are impossible to distinguish with $n$ samples with probability $1-\delta$, and $q$ further preserves the finiteness of moments of $p$.
Moreover, the variance of $q$ is at most twice the variance of $p$ if it exists.
The main consequence of our result is that, no reasonable estimator can asymptotically achieve better than the sub-Gaussian error rate for any distribution, up to constant factors, which matches the worst-case result of [Lee and Valiant, 2022].
More generally, we introduce a new definitional framework to analyze the fine-grained optimality of algorithms, which we call ``neighborhood optimality'', interpolating between the unattainably strong ``instance optimality'' and the trivially weak admissibility/Pareto optimality definitions.
As an application of the new framework, we show that the median-of-means algorithm is neighborhood optimal, up to constant factors.
It is an open question to find a neighborhood-optimal estimator \emph{without} constant factor slackness.
\end{abstract}

\section{Introduction}
Mean estimation over $\Real$ is one of the most fundamental problems in statistics: given $n$ i.i.d.~samples from some unknown distribution $p$, how do we most accurately estimate the mean of $p$, with probability $\geq 1-\delta$, from the $n$ samples?
The conventional approach is to take the sample mean, the empirical average of the samples, as the estimate; this is justified by the law of large numbers, which says that that in the limit of having infinitely many samples, the sample mean will converge to the true mean.
However, it has been long known that while the sample mean is optimal for estimating the mean of well-behaved distributions such as Gaussians, it is sensitive to the presence of \emph{outliers} in samples drawn from heavy-tailed distributions, for which the sample mean estimator can have abysmal performance.

The classic median-of-means estimator~\citep[e.g.,][]{Nemirovsky:1983,Jerrum:1986,Alon:1999}, independently invented several times in the literature, was proposed to mitigate the sensitivity of the sample mean.
Its accuracy on any distribution $p$ with finite variance is---up to constant factors---as good as the accuracy of the sample mean on a Gaussian with the same mean and variance as $p$.
In other words, the error is of \emph{sub-Gaussian rate}.
The past decade has seen renewed interest across computer science and statistics in understanding the limits of the mean estimation problem.
In one dimension, the current state of the art is 1) for distributions $p$ with finite variance $\sigma^2_p$, the recent mean estimator proposed by Lee and Valiant~\cite{Lee:2022} has sub-Gaussian rate $\sigma_p\cdot (\sqrt{2}+o(1))\sqrt{\log\frac{1}{\delta}/n}$, which is tight even in its constants, up to a $1+o(1)$ factor, and 2) the work of Bubeck, Cesa-Bianchi and Lugosi~\cite{Bubeck:2013} showing upper bounds matching the lower bounds of Devroye, Lerasle, Lugosi and Oliveira~\cite{Devroye:2016}, which show that for heavy-tailed distributions having only a $1+\alpha^{\text{th}}$ moment for some $\alpha \in (0, 1)$, the median-of-means algorithm in fact still achieves the optimal accuracy up to a constant factor. 
Both of these results, however, are optimal only in the worst case, meaning that \cite{Lee:2022,Devroye:2016} have optimal performance with respect to the class of distributions with the same $2^\textrm{nd}$ or $1+\alpha^{\text{th}}$ moments respectively; but estimators may do better than these bounds on particular input distributions. 

The natural and immediate next question is, even though Gaussian distributions are the hardest case in mean estimation, and thus sub-Gaussian performance is worst-case optimal: can one do better, at least for \emph{some} ``easier'' distributions?
Can we develop ``instance-dependent'' algorithms and analysis techniques?
Is it possible for algorithms to leverage \emph{beneficial} features/quirks of their input distribution to \emph{beat} the sub-Gaussian rate, without explicit knowledge of these features and without losing robustness to heavy-tails?

We resolve this question and show an unexpectedly nuanced answer: ``Yes in limited regimes, but in general no''.
For some distributions, even median-of-means can beat the sub-Gaussian bound, but only for a limited parameter regime per distribution---namely, if the number of samples is not too large (\Cref{prop:MoM-error}).
In general however, we show a strong and comprehensive negative result.
Our main technical result (\Cref{thm:main}) is a fine-grained indistinguishability construction: given a distribution $p$, assuming \emph{only} that $p$ has a finite mean and absent any further assumptions, we show how to construct a distribution $q_{n,\delta}$---in terms of a sample complexity $n$ and a failure probability $\delta$---such that $p$ and $q$ are impossible to distinguish with $n$ samples, with probability $\geq 1-\delta$, and yet the means of $p$ and $q$ are well-separated by some function $\eps_{n,\delta}(p)$ (stated formally in \Cref{def:trimmed}).
This in particular implies that no $n$-sample estimator with failure rate $\delta$ can simultaneously have error less than $\frac{1}{2}\eps_{n,\delta}(p)$ on both $p$ and $q$.
The function $\eps_{n,\delta}(p)$ is such that, for $p$ with a finite variance, if we take $\log\frac{1}{\delta}/n \to 0$, we have $\eps_{n,\delta}(p) \to \Omega(\sigma_p\sqrt{\log\frac{1}{\delta}/n})$, showing that no estimator can asymptotically outperform the sub-Gaussian rate for both $p$ and $q_{n,\delta}$ simultaneously.
Additionally, as shown in \Cref{sec:dq/dp}, the construction of $q_{n,\delta}$ is conservative, such that $\frac{\d q}{\d p} \le 2$ at all points, meaning that $q$ has a finite $1+\alpha^\text{th}$ moment whenever $p$ does, and furthermore, $\sigma^2_q \le 2 \sigma^2_p$ whenever $\sigma^2_p$ exists.
Thus, the same indistinguishability result still applies even if we further require the existence of higher moments for both $p$ and $q$.

The key message of our paper is that such lower bounds are to be \emph{circumvented}, through identifying additional favorable distribution structure for the mean estimation problem.\footnote{``Favorable distribution structure'', in particular, \emph{cannot} mean just the existence of higher moments, since our indistinguishability construction applies even in this case.}
This observation has already led to further work in the area, guiding the design of a new algorithm that outperforms the sub-Gaussian rate via a symmetry assumption. 
Gupta et al.~\cite{Gupta:2023COLT} show that, assuming the distribution is symmetric about its mean, one can achieve finite-sample and instance-dependent Fisher information rates for mean estimation, which can be significantly better than sub-Gaussian rates.
We view our paper and the~\cite{Gupta:2023COLT}  result together as a \textbf{call to arms}
to explore other structural assumptions that can sidestep our lower bound construction.

Beyond the asymptotic implications in the finite variance setting, our results fully characterize mean estimation in the regimes of 1) finite variance and finite samples, and 2) infinite variance, and indeed, even when no $1+\alpha^\text{th}$ moment exists for any constant $\alpha > 0$.
In particular, we give a simple, yet very general re-analysis of the median-of-means estimator on distributions only assumed to have a finite mean.
Its estimation error on distribution $p$, with probability $1-\delta$ over $n$ samples, is $O(\eps_{n,\delta}(p))$ (\Cref{prop:MoM-error}), matching our main indistinguishability result up to constants (\Cref{thm:main}). %rewrite sentence???

\subsection{Our Results and Techniques}

Our main result is an indistinguishability result, for every distribution $p$, which serves as a mean estimation lower bound.
Ideally, given the motivation earlier in the introduction, we want to show that the sub-Gaussian rate is a lower bound, but such a bound cannot hold in finite samples.
To see this, consider a distribution which has $\ll 1/n$ mass that is extremely far away from the mean, and which contributes the majority of the variance, yet only a minuscule portion of the mean.
Given only $n$ samples, with high probability we will not see any samples from this mass, and so mean estimation can in fact be ``effectively'' performed on the conditional distribution without this outlier mass.
The conditional distribution has essentially the same mean as the original, yet has far smaller variance, thus allowing us to \emph{beat} the (variance-dependent) sub-Gaussian rate.
We thus start this section by defining an error function $\eps_{n,\delta}(p)$ for each distribution $p$ (\Cref{def:trimmed}), capturing the estimation error we expect for $p$, taking into account that we intuitively expect both algorithms and lower bounds to ignore outlier mass.
Our main result will then construct, for every distribution $p$, a new distribution $q$ that is indistinguishable from $p$ using $n$ samples, yet has mean difference at least $\eps_{n,\delta}(p)$ from $p$.

\begin{definition}\label{def:trimmed}
Given a (continuous) distribution $p$ with mean $\mu_p$ and a real number $t \in [0, 1]$, define the \emph{$t$-trimming} operation on $p$ as follows: select a radius $r$ such that the probability mass in $[\mu_p - r, \mu_p + r]$ equals $1-t$; 
then, return the distribution $p$ {\textbf{conditioned}} on lying in $[\mu_p-r,\mu_p+r]$.

Given $n$ and $\delta$, we define a standard trimmed distribution $p^*_{n,\delta}$ to be the $\frac{0.45}{n} \ld$-trimmed version of $p$.
When $\delta$ is implicit, we may denote this as $p^*_n$.
We also define the error function $\eps_{n,\delta}(p) = |\mu_p - \mu_{p^*_n}| + \sigma_{p^*_n}\sqrt{\frac{4.5 \ld}{n}}$.
\end{definition}

\Cref{thm:main} shows that, given any distribution $p$ with a finite mean, it is possible to construct a distribution $q_{n,\delta}$ such that 1) $p$ and $q$ are indistinguishable under $n$ samples with probability $1-\delta$, yet 2) the means of $p$ and $q$ are separated by $\Omega(\eps_{n,\delta}(p))$.
The construction of $q_{n,\delta}$ is also such that $\d q/\d p \le 2$, showing that, in many senses, ``$q$ does not have more extreme tails than $p$''.

\begin{theorem}
\label{thm:main}
Let $n$ be the sample complexity and $\delta$ be the failure probability, and recall the definition of the error function $\eps_{n,\delta}$ from \Cref{def:trimmed}.
Assume that there is a sufficiently small constant which upper bounds both $\frac{\ld}{n}$ and $\delta$.
Then for any distribution $p$ with a finite mean $\mu_p$, 
there exists a distribution $q \neq p$ with mean $\mu_q$ such that:
\begin{itemize}[leftmargin=2em,topsep=0.01em]
\item $|\mu_q-\mu_p| \ge \frac{1}{32}\eps_{n,\delta}(p)$
\item $\log (1-\DH^2(p,q))\geq \frac{1}{2n}\log  4\delta$
\item $\frac{\d q}{\d p} \le 2$.
\end{itemize}

In particular, by a standard fact (\Cref{fact:hellinger}) on the squared Hellinger distance, this implies that $p$ and $q$ are indistinguishable using $n$ samples, with probability $1-\delta$.
Furthermore, since $\frac{\d q}{\d p} \le 2$, we have $\sigma^2_q \le \Exp_q[(X-\mu_q)^2] \le \Exp_q[(X-\mu_p)^2] \le 2\Exp_p[(X-\mu_p)^2] = 2\sigma^2_p$.
\end{theorem}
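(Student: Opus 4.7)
\emph{Setup and case split.}  Let $t = 0.45 \log(1/\delta)/n$ and let $r$ be the $t$-trimming radius, so that the tails $T_\pm = \{x : \pm(x - \mu_p) > r\}$ carry $p$-masses $P_\pm$ summing to $t$ with conditional means $m_\pm$.  From $\mu_p = (1-t)\mu_{p^*_n} + P_+ m_+ + P_- m_-$ one reads off $\mu_p - \mu_{p^*_n} = A - B$, with $A := P_+(m_+ - \mu_{p^*_n}) \geq 0$ and $B := P_-(\mu_{p^*_n} - m_-) \geq 0$, so $\max(A,B) \geq |\mu_p - \mu_{p^*_n}|$.  I will construct $q$ differently depending on which of the two summands in $\eps_{n,\delta}(p) = |\mu_p - \mu_{p^*_n}| + \sigma_{p^*_n}\sqrt{4.5\log(1/\delta)/n}$ is at least $\eps_{n,\delta}(p)/2$.

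\emph{Case 1 (tail-bias term dominates).}  Assume without loss of generality $A \geq B$.  Set $q = p/2$ on $T_+$, $q = p(1 + P_+/(2(1-t)))$ on $[\mu_p - r,\mu_p + r]$, and $q = p$ on $T_-$.  Normalization holds by design, and $\d q/\d p \leq 1 + P_+/(2(1-t)) \leq 1+t \leq 2$.  Integrating $x(q-p)$ gives $\mu_q - \mu_p = -A/2$, so $|\mu_q - \mu_p| \geq (A - B)/2 = |\mu_p - \mu_{p^*_n}|/2 \geq \eps_{n,\delta}(p)/4$.  Splitting $\int\sqrt{pq}$ into its three pieces and Taylor-expanding $\sqrt{1 + c}$ in the small parameter $c = P_+/(2(1-t))$ yields $\DH^2(p, q) = (1 - 1/\sqrt{2} - 1/4)\,P_+ + O(P_+^2) = O(t)$, well inside the allowed $\log(1/(4\delta))/(2n)$.

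\emph{Case 2 (variance term dominates).}  Take $q = p(1 + \alpha h)$ with $h$ supported on $[\mu_p - r, \mu_p + r]$, $|h| \leq 1$, and $\int p \, h = 0$.  Then $\d q/\d p \in [1-\alpha, 1+\alpha] \subseteq [0,2]$ for any $\alpha \leq 1$, the mean shift is $\alpha \int p \, x \, h$, and Taylor-expanding $\sqrt{1 + \alpha h}$ gives $\DH^2(p, q) \leq \alpha^2 \int p \, h^2/8 + O(\alpha^3)$.  The task reduces to finding $h$ with $(\int p \, x \, h)^2 / \int p \, h^2 = \Omega(\sigma_{p^*_n}^2)$; setting $\alpha = \Theta(\sqrt{\log(1/\delta)/n})$ then yields shift $\Omega(\sigma_{p^*_n}\sqrt{\log(1/\delta)/n})$ with $\DH^2 = O(\log(1/\delta)/n)$.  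Natural candidates for $h$ are a sign tilt $\sgn(X - \mu_{p^*_n})$, a clipped linear tilt $\propto \mathrm{clip}((X - \mu_{p^*_n})/(C\sigma_{p^*_n}), -1, 1)$, or a quantile-pair indicator $\1_{X > q^*_{1-\gamma}} - \1_{X < q^*_\gamma}$ with $\gamma$ chosen to maximize $\sqrt{\gamma}\,(q^*_{1-\gamma} - q^*_\gamma)$.

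\emph{Main obstacle.}  The delicate part is Case 2, because no single bounded tilt is universally near-optimal: the sign tilt fails when $\Exp|X - \mu_{p^*_n}| \ll \sigma_{p^*_n}$, the clipped linear tilt becomes slack when $\sigma_{p^*_n} \ll r$, and a single quantile pair can lose a $\log$ factor on heavy-tailed $p^*_n$.  I expect the proof either to sub-split Case 2 and pick among these candidates, or to use a multi-scale combination of sign tilts on dyadic shells $\{|x - \mu_{p^*_n}| \in [2^k, 2^{k+1}]\}$ that amortizes the Hellinger budget across scales; when the relevant variance sits in the $p$-tails themselves, a symmetric tail-swap moving up to $t/2$ mass between $T_+$ and $T_-$ also suffices.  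The constant $1/32$ is comfortable enough to absorb these factors---the case-split factor $1/2$, the $\sqrt{4.5}$ inside $\eps_{n,\delta}(p)$, and the slack from $(1-t)$ and the $\|h\|_\infty$ truncation.
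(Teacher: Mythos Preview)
Your overall architecture—splitting on which summand of $\eps_{n,\delta}(p)$ dominates—matches the paper. Your Case~1 construction (halving one tail and redistributing to the center) differs from the paper's, which simply takes the mixture $q = \tfrac{3}{4}p + \tfrac{1}{4}p^*_n$; both work, and yours is arguably just as clean.

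The genuine gap is Case~2, where you correctly identify the difficulty but leave it unresolved. Two ideas in the paper's proof dissolve your ``main obstacle'' completely, with no sub-splitting or multi-scale combination needed.

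\textbf{First, do not confine the tilt to the trimmed interval.} The paper uses $h(x)=\min(1,\max(-1,ax))$ on all of $\Real$ (centered at $\mu_p$), and shows by the intermediate value theorem that some $a\in\bigl(0,\sigma_{p^*_n}^{-1}\sqrt{\ld/n}\,\bigr]$ makes the mean shift exactly $\tfrac{1}{8}\sigma_{p^*_n}\sqrt{\ld/n}$. At the endpoint $a=\sigma_{p^*_n}^{-1}\sqrt{\ld/n}$, the mean shift is large enough in both sub-regimes: when $1/a\ge r$ the center term $a\int_{-r}^r x^2\,\d p\ge \tfrac{1}{2}a\,\sigma_{p^*_n}^2$ already suffices, while when $1/a<r$—exactly your bad regime ``$\sigma_{p^*_n}\ll r$''—the \emph{trimmed tails} supply the shift, since $\int_{|x|>1/a}|x|\,\d p\ge\int_{|x|>r}|x|\,\d p\ge r\cdot\tfrac{0.45\ld}{n}\ge 0.45\,\sigma_{p^*_n}\sqrt{\ld/n}$. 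By restricting $h$ to $[\mu_p-r,\mu_p+r]$ you threw away precisely this contribution; your proposed ``tail-swap'' patch is groping toward it but is unnecessary as a separate mechanism.

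\textbf{Second, bound the Hellinger distance by the mean shift directly, not through a Cauchy--Schwarz ratio.} For the clipped linear tilt one has the pointwise inequality $x\,h(x)\ge h(x)^2/a$ (check the clipped and unclipped regions separately). Combined with $(1-\sqrt{1+h})^2\le h^2$, this gives
\[
\DH^2(p,q^+)\ \le\ \tfrac{1}{2}\int h^2\,\d p\ \le\ \tfrac{a}{2}\int x\,h\,\d p\ =\ \tfrac{a}{2}\cdot(\text{mean shift}),
\]
so with $a\le\sigma_{p^*_n}^{-1}\sqrt{\ld/n}$ and mean shift $=\tfrac{1}{8}\sigma_{p^*_n}\sqrt{\ld/n}$ the Hellinger bound is immediate. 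This sidesteps your ratio requirement $(\int p\,x\,h)^2/\int p\,h^2=\Omega(\sigma_{p^*_n}^2)$, which, as you observed, no single bounded tilt achieves uniformly.

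A minor point: the paper does not enforce $\int p\,h=0$. Instead it notes that the total masses of $q^+$ and $q^-$ sum to $2$, picks the one with mass $\ge 1$, and downscales it—showing separately that downscaling a super-unit measure toward a distribution can only decrease $\DH^2$.
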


Using a standard testing-to-estimation reduction, it follows from the main theorem that there can be no estimator that can achieve error at most $\frac{1}{64}\eps_{n,\delta}(p)$ simultaneously on $p$ and $q_{n,\delta}$.

\begin{corollary}
Let $n$ be the sample complexity and $\delta$ be the failure probability, and recall the definition of the error function $\eps_{n,\delta}$ from \Cref{def:trimmed}.
Given a distribution $p$ with finite mean, consider the construction of $q$ in \Cref{thm:main}.
Then, there is no estimator that achieves error less than $\frac{1}{64}\eps_{n,\delta}(p)$ over $n$ samples with probability $1-\delta$, for both $p$ and $q$.
\end{corollary}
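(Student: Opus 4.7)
The plan is to prove the corollary by the standard testing-to-estimation reduction, turning any hypothetical accurate estimator into a distinguisher that contradicts the Hellinger bound from \Cref{thm:main}.

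First, I would proceed by contradiction: suppose there exists an estimator $\hat{\mu}$ that, on $n$ samples, outputs a value within $\frac{1}{64}\eps_{n,\delta}(p)$ of the true mean with probability at least $1-\delta$, simultaneously for input distributions $p$ and $q=q_{n,\delta}$. I would then define a tester $T$ as follows: given $n$ samples, compute $\hat{\mu}$ and output ``$p$'' if $|\hat{\mu}-\mu_p| \leq |\hat{\mu}-\mu_q|$, and ``$q$'' otherwise.

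The key step is the triangle-inequality check that $T$ is correct whenever $\hat{\mu}$ is sufficiently accurate. Since \Cref{thm:main} guarantees $|\mu_p-\mu_q|\geq \frac{1}{32}\eps_{n,\delta}(p) = 2\cdot \frac{1}{64}\eps_{n,\delta}(p)$, the two open balls of radius $\frac{1}{64}\eps_{n,\delta}(p)$ around $\mu_p$ and $\mu_q$ are disjoint. Hence, on samples from $p$, if $\hat{\mu}$ lies within $\frac{1}{64}\eps_{n,\delta}(p)$ of $\mu_p$, then $|\hat{\mu}-\mu_q|>\frac{1}{64}\eps_{n,\delta}(p)\geq |\hat{\mu}-\mu_p|$, so $T$ outputs ``$p$''; and symmetrically for $q$. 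Thus $T$ correctly identifies the input distribution with probability at least $1-\delta$ under both hypotheses.

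Finally, I would invoke the Hellinger bound in \Cref{thm:main}, namely $\log(1-\DH^2(p,q))\geq \tfrac{1}{2n}\log(4\delta)$, together with \Cref{fact:hellinger} (the standard tensorization-and-TV conversion for Hellinger distance), to conclude that no $n$-sample test can distinguish $p$ from $q$ with success probability $\geq 1-\delta$ on both. This directly contradicts the existence of $T$, completing the proof. There is essentially no obstacle here, as the argument is a routine constants-check; the only thing to verify carefully is that the factor of $2$ between $\frac{1}{32}$ and $\frac{1}{64}$ is exactly what makes the two decision regions disjoint, so that a single estimation call suffices as a tester.
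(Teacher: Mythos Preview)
Your proposal is correct and is exactly the ``standard testing-to-estimation reduction'' that the paper invokes (without spelling out) immediately before the corollary; the same argument also appears in the paper as the proof of \Cref{prop:indistinguishability}. The constants check you do---that $|\mu_p-\mu_q|\ge \frac{1}{32}\eps_{n,\delta}(p)$ makes the two radius-$\frac{1}{64}\eps_{n,\delta}(p)$ open balls disjoint---is precisely what is needed, and the contradiction with \Cref{fact:hellinger} is the intended conclusion.
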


We contrast this lower bound with more standard impossibility results that have the flavor: ``one cannot estimate the mean to within $\sigma\cdot (\sqrt{2}+o(1))\sqrt{\log\frac{1}{\delta}/n}$, since there are a pair of Gaussian distributions of variance $\sigma$, separated by twice this distance, that are indistinguishable in $n$ samples up to probability $1-\delta$''. As opposed to showing the indistinguishability of ``nice'' distributions like Gaussians, we instead show that for \emph{any} distribution $p$ of interest (with finite mean), we exhibit a generic construction of a hard-to-distinguish ``partner'' distribution $q$, of rather different mean, yet all of whose tails are comparable to those of $p$.

For reference, the definition of Hellinger distance and the standard fact we reference above are:
\begin{fact}
\label{fact:hellinger}
Consider the squared Hellinger distance between two distributions $p$ and $q$, defined as
\[ \DH^2(p, q) = \frac{1}{2} \int (\sqrt{\d p} - \sqrt{\d q})^2 \]
If the two distributions $p$ and $q$ are such that $\log(1-\DH^2(p,q)) \ge \frac{1}{2n}\log 4\delta$, then there is no test that distinguishes $p$ and $q$ with probability $1-\delta$ using $n$ samples.
\end{fact}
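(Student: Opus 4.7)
The plan is to chain together three standard ingredients: (i) the reduction from hypothesis testing to total variation between the $n$-fold product distributions, (ii) Le Cam's inequality relating total variation to the Bhattacharyya coefficient $\mathrm{BC}(p,q) = \int \sqrt{\d p\, \d q} = 1 - \DH^2(p,q)$, and (iii) the tensorization identity $\mathrm{BC}(p^{\otimes n}, q^{\otimes n}) = \mathrm{BC}(p,q)^n$, which follows immediately from the product form of the integrand.

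First I would recall the testing reduction: for any test $T$ attempting to distinguish $p$ from $q$ using $n$ i.i.d.\ samples, the sum of Type I and Type II errors is lower bounded by $1 - \DTV(p^{\otimes n}, q^{\otimes n})$. Consequently, if a test succeeds with probability at least $1-\delta$ under both hypotheses, then $\DTV(p^{\otimes n}, q^{\otimes n}) \ge 1 - 2\delta$. So it suffices to show that the hypothesis $\log(1 - \DH^2(p,q)) \ge \frac{1}{2n}\log 4\delta$ forces $\DTV(p^{\otimes n}, q^{\otimes n}) < 1 - 2\delta$.

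Next I would apply Le Cam's inequality $\DTV(P,Q) \le \sqrt{1 - \mathrm{BC}(P,Q)^2}$, derived from the Cauchy--Schwarz bound $\int |\d P - \d Q| \le \sqrt{\int (\sqrt{\d P} - \sqrt{\d Q})^2 \int (\sqrt{\d P} + \sqrt{\d Q})^2}$. Combining with tensorization,
\[
\DTV(p^{\otimes n}, q^{\otimes n}) \le \sqrt{1 - (1 - \DH^2(p,q))^{2n}}.
\]
Exponentiating the hypothesis gives $(1 - \DH^2(p,q))^{2n} \ge 4\delta$, so the right-hand side is at most $\sqrt{1 - 4\delta}$. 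A one-line check $(1-2\delta)^2 = 1 - 4\delta + 4\delta^2 \ge 1 - 4\delta$ yields $\sqrt{1-4\delta} \le 1 - 2\delta$, completing the chain.

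There is no real obstacle here; this is folklore. The only places to be careful are (a) keeping track of whether the ``$1-\delta$ success'' is required under both hypotheses (which is what permits the $1 - 2\delta$ threshold on total variation) and (b) ensuring the tensorization is applied to $\mathrm{BC}$ rather than directly to $\DH^2$, since the latter does not tensorize multiplicatively. The assumed form $\log(1 - \DH^2) \ge \frac{1}{2n}\log 4\delta$ is precisely tuned so that after tensorization and squaring, one lands at the clean bound $1 - 4\delta$.
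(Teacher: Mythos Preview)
Your proof is correct. The paper does not actually prove \Cref{fact:hellinger}; it is stated as a ``standard fact'' and used as a black box, so there is no proof in the paper to compare against. Your argument via the Bhattacharyya coefficient tensorization and the Le Cam inequality $\DTV \le \sqrt{1 - \mathrm{BC}^2}$ is exactly the standard derivation one would expect, and the arithmetic lands cleanly on the $1-2\delta$ threshold as you note.
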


Complementing the specific construction of $q$ and analysis of its properties, we also introduce a new and general \emph{definition} framework, speaking to the challenge of capturing ``beyond worst case analysis'' in this nuanced setting. In \Cref{sec:neighborhood-optimality} we motivate and introduce this notion.
Broadly, we want to capture the intuition of ``instance-optimal'' algorithms, namely, an algorithm that performs as well on the given distribution $p$ as \emph{any} algorithm customized towards $p$; however, this definition is unattainably strong in our setting. By contrast, weaker notions such as ``admissibility'' or Pareto-efficiency are too weak to rule out trivial and effectively useless estimators. We introduce a new notion which we call ``neighborhood optimality'' that subtly blends between these notions. See \Cref{sec:neighborhood-optimality} for details.
\Cref{app:localminimax} also gives an in-depth discussion comparing neighborhood optimality with the more commonly-used notion of \emph{local minimax}~\cite{Asi:2020Arxiv,Asi:2020NeurIPS,Huang:2021}.
While the two definitions look superficially similar, we show in the appendix---using a general proposition and a concrete example---that our new notion is a stronger and also more robust definition in the context of mean estimation.

As an application of this new framework, we show in \Cref{sec:MoM-optimal} that the standard median-of-means estimator is neighborhood optimal up to constant factors.
It is an open question to design a neighborhood-optimal estimator which does not have such constant factor slackness.

\subsection{Open Questions}

We briefly discuss a few open questions and future research directions raised by our results.

\paragraph{Optimal constants for neighborhood optimality estimators---in theory and in practice}
While median-of-means enjoys neighborhood optimality as we show, the hidden multiplicative constants constants in our analysis are not tight.
The median-of-means estimator is also not recommended in practice due to its large asymptotic variance~\cite{Minsker:2019}.
The goal thus is to show that more modern estimators, for example~\cite{Lee:2022}, also enjoy neighborhood optimality.
Ideally, such analysis would yield optimal constants.

\paragraph{Other distributional structures for avoiding asymptotic sub-Gaussian lower bound}
As mentioned in the introduction, we view our paper as a \emph{call to arms} to investigate distributional structures and assumptions that allow mean estimators to go beyond the sub-Gaussian error rate.
The paper of Gupta et al.~\cite{Gupta:2023COLT} is the first work aiming to circumvent our lower bounds---showing the benefit of \emph{symmetric} distributions for mean estimation.
The hope, and challenge, is to find other realistic settings that enable mean estimation algorithms beyond the sub-Gaussian benchmark.

\paragraph{High-Dimensional Neighborhood Optimality}
Generalizing the framework and results of this paper to the \emph{high-dimensional} setting is a compelling direction for future work.
In high dimensions, for a distribution with covariance $\Sigma$, the corresponding sub-Gaussian rate for mean estimation under the $\ell_2$ norm is $\Theta(\frac{1}{\sqrt{n}}\sqrt{\tr(\Sigma) + \|\Sigma\|\log\frac{1}{\delta}})$.
We conjecture that, like in the 1-dimensional case, mean estimators cannot outperform the sub-Gaussian rate in the asymptotic regime as $n \to \infty$.
More specifically, we conjecture that whenever a distribution $p$ has a finite covariance, a version of \Cref{thm:main} holds for some $\eps_{n,\delta}(p)$ that approaches the high-dimensional sub-Gaussian error as $n \to \infty$, up to constants.
Further, for distributions without a finite covariance matrix, it is open to fully characterize the instance-by-instance error rates in high dimensions, and finding an analog of $\eps_{n,\delta}(p)$ that characterizes both the upper and lower bounds.

\subsection{Related Work}

The mean estimation problem has been extensively studied, even in one dimension.
In the classic setting where the underlying distribution is assumed to have finite but unknown variance, the median-of-means algorithm, independently discovered by different authors~\citep[e.g.,][]{Jerrum:1986,Alon:1999,Nemirovsky:1983}, was the first to achieve sub-Gaussian rate to within a constant factor in the high probability regime.
The seminal work of Catoni~\cite{Catoni:2012} reinvigorated the study of mean estimation, by proposing the first estimator which attains sub-Gaussian rate tight to within a $1+o(1)$ factor, but his estimator requires a-priori knowledge of the variance, or a bounded $4^{\text{th}}$ moment assumption that allows accurate estimation of the variance.
This work further showed that the sub-Gaussian rate is a lower bound on the optimal estimation error.
Subsequent work by Devroye et al.~\cite{Devroye:2016} proposed a different estimator, also attaining $1+o(1)$-tight sub-Gaussian rate under the bounded $4^{\text{th}}$ moment assumption, which has additional structural properties.
Recent work by Lee and Valiant~\cite{Lee:2022} constructs an estimator achieving sub-Gaussian rate to within a $1+o(1)$ factor for any distribution with finite but unknown variance, absent any knowledge assumption or bounded $4^{\text{th}}$ moment assumption.

In the more extreme setting where the underlying distribution may have infinite variance, but is guaranteed to have finite (but unknown) $1+\alpha^{\text{th}}$ moment for some $\alpha \in (0,1)$, Bubeck et al.~\cite{Bubeck:2013} proved an upper bound on the error achieved the median-of-means estimator, and Devroye et al.~\cite{Devroye:2016} showed a matching lower bound up to a constant factor.

See the in-depth survey by Lugosi and Mendelson~\cite{Lugosi:2019mean} on sub-Gaussian mean estimation and regression results prior to 2019.

``Beyond worst-case'' analysis is a theme of much recent work and attention in the computer science literature.
See~\cite{Roughgarden:2021} for examples of ``beyond worst-case'' analyses in various contexts in computer science and statistics including, for example, combinatorial algorithmic problems, auction design, and hypothesis testing.
Both of the above tightness results in mean estimation are in the worst-case, and in this work, we present (to our knowledge) the first ``beyond worst-case analysis'' results for the mean estimation problem.
We emphasize also that our results are applicable even to distributions with a finite mean, but without any finite $1+\alpha^{\text{th}}$ moment for any $\alpha > 0$.

The notion of admissibility in statistics and the analogous concept of Pareto efficiency serve as a main motivation for our definition of neighborhood optimality, introduced in \Cref{sec:neighborhood-optimality}.
They
are well-studied notions in their respective fields, to the extent that they are standard topics in undergraduate courses.
See for example the textbooks by Keener~\cite{Keener:2010} and Starr~\cite{Starr:1997} for expositions. 

The other main definitional motivation is the notion of instance optimality, which falls under the umbrella of ``beyond worst-case analysis'' in the computer science literature.
We highlight some of the uses of instance optimality in statistical contexts.
Valiant and Valiant~\cite{Valiant:2017} gave the first instance optimal algorithm for the identity testing problem (in total variation distance) for discrete distributions.
In later work~\citep[see][]{Valiant:2016}, the same authors showed how to instance-optimally learn discrete distributions (in total variation distance).

A different line of work studies ``instance optimality'' in the context of mean estimation with differential privacy (DP)~\citep[e.g.,][]{Asi:2020Arxiv,Asi:2020NeurIPS,Huang:2021}.
Specifically, these works address the problem of differentially-privately estimating the mean of a data set, where the data set itself (or equivalently, the uniform distribution over the data set) is the instance.
In the DP setting, instance optimality is also not satisfiable by any estimator for the same reason as the non-DP setting, since the hardcoded estimator is always differentially private.
They instead use a \emph{local minimax} (or locally worst-case) notion of optimality (although in these works this notion is sometimes also called ``instance optimality''; we take care to distinguish the two definition styles in this paper), where the particular locality/neighborhood structure they use is restricted to data sets that have Hamming distance 1 from the instance, since this is a key component in the definition of DP.
The local minimax notion of optimality is closely related to our notion of neighborhood optimality.
We compare and contrast the two notions in \Cref{app:localminimax}, and explain why our definition is stronger and more appropriate for our context.

\section{Proof of main results}
\label{sec:proofsketch_main}

This section gives the construction and proof of our main result, \Cref{thm:main}.

Our construction of $q$ from $p$ will have 2 cases, depending on which of the 2 terms in the definition of the error function $\eps_{n,\delta}$ dominates. 
Recall that the error $\eps_{n,\delta}(p)$ is the sum of two terms involving the ``trimmed'' distribution $p^*_n$: (ignoring constants) $|\mu_p-\mu_{p^*_n}|$ and $\sigma_{p^*_n}\sqrt{\ld/n}$; intuitively, the first term measures to what degree $p$ has an ``asymmetric tail'', and the second term measures the variance of $p$ in its central region.

First, consider the case when the first term is larger, namely, $|\mu_p-\mu_{p^*_n}| > \sigma_{p^*_n}\sqrt{4.5\ld/n}$.
Our goal in constructing $q$ is to maximize $|\mu_p-\mu_q|$ subject to $q$ being indistinguishable from $p$.
Given that 1) the mean of $p^*_n$ is already far from the mean of $p$ by assumption in the case analysis, and 2) $p$ and $p^*_n$ are by construction hard to distinguish since only a small amount of probability mass was trimmed from $p$ to make $p^*_n$, we simply need to construct $q$ as a carefully chosen convex combination of $p$ and $p^*_n$, and show that this $q$ indeed satisfies all the properties in the definition of $N_{n,\delta}(p)$.

Next, for the remaining case when the variance term $\sigma_{p^*_n}\sqrt{4.5\ld/n}$ is larger than the remaining term $|\mu_p-\mu_{p^*_n}|$ in $\eps_{n,\delta}(p)$, we now want to construct $q$ such that the mean shift $|\mu_p-\mu_q|$ is large compared to the variance term $\sigma_{p^*_n}\sqrt{4.5\ld/n}$, while ensuring that $q$ is indistinguishable from $p$.
To achieve this, we create $q$ that is a ``skewed'' version of $p$, scaling the probability density by a linear function $1+ax$, where larger $a$ means more mean shift between $q$ and $p$, but also means that it is easier to distinguish $q$ from $p$. 
Technically, we truncate the linear scaling factor $1+ax$ to lie between 0 and 2, so that $q$ will satisfy Condition 3 of the requirements for $q$ presented in our main theorem,  \Cref{thm:main}; 
the probability mass might not be 1 after this skewing, so we might need to normalize; also, we point out that for the purposes of \Cref{thm:main} we do not care whether the mean of $q$ is shifted to the \emph{left} or \emph{right} of $p$ (corresponding to choosing a positive or negative parameter $a$), and the construction makes use of this choice.

\begin{definition}[Construction of indistinguishable pair]
\label{def:q}
Given a distribution $p$, we construct a distribution $q$ in a shift-and-scale invariant manner as follows.
\begin{description}

\item[Case 1: $|\mu_p-\mu_{p^*_n}| > \sigma_{p^*_n}\sqrt{4.5\frac{\ld}{n}}$.] Define $\lambda = \frac{3}{4}$
and construct $q$ to be the weighted average $q=\lambda p + (1-\lambda) p^*_n$.

\item[Case 2: $|\mu_p-\mu_{p^*_n}| \le \sigma_{p^*_n}\sqrt{4.5\frac{\ld}{n}}$.] Without loss of generality, assume that $\mu_p = 0$.
Let parameter $a$ be the solution in the interval $\left(0, \frac{1}{\sigma_{p^*_n}}\sqrt{\frac{\ld}{n}}\right]$ to the below equation characterizing the mean shift, as guaranteed by \Cref{lem:a_asym} in \Cref{app:proof_thm_main}:
\begin{equation*}
    \int_{-\infty}^{-\frac{1}{a}} (-x) \, \d p + a \cdot \int_{-\frac{1}{a}}^{\frac{1}{a}} x^2 \, \d p + \int_{\frac{1}{a}}^{\infty} x \, \d p  = \frac{1}{8} \sigma_{p^*_n}{\sqrt{\frac{\ld}{n}}}
\end{equation*}
We construct two non-unit measures $q^+,q^-$, defined as scaled versions of $p$, as $\frac{\d q^{\pm}}{\d p}(x) = 1+ \min(1,\max(-1,\pm a x))$.
By symmetry the masses of $q^+$ and $q^-$ sum to 2; thus one of $q^+, q^-$ has mass at least 1.
Construct $q$ by choosing that one of $q^+,q^-$, and downscaling it by a factor $b \in [\frac{1}{2}, 1]$ such that the total probability mass is indeed 1. \hfill $\triangleleft$
\end{description}
\end{definition}

In the rest of the subsections, we will prove all the properties of the construction needed by \Cref{thm:main}.
We observe that the non-unit measure $q^-$ is ``symmetric'' to $q^+$, in the sense that its first moment shift from $p$ is identical but of the opposite sign.
Therefore, most of the analysis below will assume the $q^+$ case without loss of generality.

\subsection{Checking that $\frac{\d q}{\d p} \le 2$}
\label{sec:dq/dp}

It is straightforward to check that $\frac{\d q}{\d p} \le 2$ by construction, for both cases of \Cref{def:q}.

\begin{lemma}
Suppose there is a sufficiently small absolute constant that upper bounds $\frac{\ld}{n}$.
Given a distribution $p$, if we construct $q$ as in Case 1 (the large $|\mu_{p^*_n}-\mu_p|$ case) in \Cref{def:q}, then $\frac{\d q}{\d p} \le 2$.
\end{lemma}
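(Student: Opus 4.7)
The plan is to unpack the definition of $p^*_n$ as a trimming (conditioning) of $p$, compute the Radon--Nikodym derivative $\frac{\d q}{\d p}$ explicitly using the convex-combination form of $q$ in Case~1, and check that the hypothesis on $\tfrac{\ld}{n}$ keeps this quantity at most $2$.

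First I would recall from \Cref{def:trimmed} that $p^*_n$ is $p$ conditioned on an interval $I = [\mu_p - r, \mu_p + r]$ with $p$-mass exactly $1-t$, where $t = \frac{0.45}{n}\ld$. Consequently, as a density/Radon--Nikodym derivative against $p$,
\[
\frac{\d p^*_n}{\d p}(x) \;=\; \frac{\1_{x \in I}}{1-t}.
\]
Since $q = \lambda p + (1-\lambda) p^*_n$ with $\lambda = \tfrac{3}{4}$, linearity gives
\[
\frac{\d q}{\d p}(x) \;=\; \lambda + (1-\lambda)\frac{\d p^*_n}{\d p}(x) \;=\; \begin{cases} \tfrac{3}{4} + \tfrac{1}{4(1-t)}, & x \in I, \\ \tfrac{3}{4}, & x \notin I. \end{cases}
\]
The pointwise maximum is therefore $\tfrac{3}{4} + \tfrac{1}{4(1-t)}$, which is at most $2$ as soon as $t \le \tfrac{4}{5}$.

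The final step is to note that the hypothesis ``there is a sufficiently small absolute constant upper bounding $\tfrac{\ld}{n}$'' directly forces $t = \frac{0.45}{n}\ld$ to be well below $\tfrac{4}{5}$ (any constant bound on $\ld/n$ smaller than, say, $1$ already suffices), so the bound $\frac{\d q}{\d p} \le 2$ holds. There is no real obstacle here; the only subtlety is to be careful that trimming yields a conditional distribution (hence the $\tfrac{1}{1-t}$ reweighting on $I$ and $0$ off $I$, not some other normalization), which is what makes the off-$I$ case trivially bounded by $\lambda < 2$ and reduces the whole argument to a one-line inequality on $t$.
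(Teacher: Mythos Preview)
Your proof is correct and follows essentially the same approach as the paper: both split into the on-interval and off-interval cases, compute $\frac{\d q}{\d p}$ explicitly from the convex-combination definition and the conditioning formula $\frac{\d p^*_n}{\d p} = \frac{\1_I}{1-t}$, and then invoke the smallness of $t = \frac{0.45}{n}\ld$. Your version is in fact slightly more explicit, pinning down the threshold $t \le \tfrac{4}{5}$ rather than leaving it as ``sufficiently small''.
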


\begin{proof}
Let the support of $p^*_n$ be $[x_\mathrm{left}, x_\mathrm{right}]$.
At $x \notin [x_\mathrm{left}, x_\mathrm{right}]$, we have $\frac{\d q}{\d p} = \frac{\lambda \, \d p}{\d p} = \frac{3/4 \, \d p}{\d p} \le 2$.
Otherwise, at $x \in [x_\mathrm{left}, x_\mathrm{right}]$, we have 
\begin{align*}
    \frac{\d q}{\d p} = \frac{\lambda \, \d p + (1 -\lambda) \, \d p^*_n}{\d p} 
        = \frac{\d}{\d p} \left(\lambda \, p + \frac{1-\lambda}{1-\frac{0.45\ld}{n}} p\right) \quad \text{(by the definition of $p^*_n$)}
        \le 2
\end{align*}
where the last line follows from the fact that $\lambda$ is a constant in $(0, 1)$ and $\frac{\ld}{n}$ is assumed to be bounded by some sufficiently small absolute constant.
\end{proof}

\begin{lemma}
Suppose there is a sufficiently small absolute constant that upper bounds $\frac{\ld}{n}$.
Given a distribution $p$, if we construct $q$ as in Case 2 (the small $|\mu_{p^*_n}-\mu_p|$ case) in \Cref{def:q}, then $\frac{\d q}{\d p} \le 2$.
\end{lemma}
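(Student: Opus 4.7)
The plan is to read the bound directly off the construction in Case 2; the assumption on $\frac{\ld}{n}$ being small is inherited from the statement of the lemma but is not actually used in this case, since the bound will hold pointwise by design of $q^\pm$.

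First, I would note that the truncation $\min(1,\max(-1,\pm a x))$ takes values in $[-1,1]$ for every $x \in \Real$, regardless of the sign of $a x$. Hence by the definition of $q^\pm$ in \Cref{def:q},
\begin{equation*}
\frac{\d q^{\pm}}{\d p}(x) \;=\; 1 + \min\bigl(1,\max(-1,\pm a x)\bigr) \;\in\; [0,2]
\end{equation*}
for all $x$ in the support of $p$. In particular, $\frac{\d q^\pm}{\d p} \le 2$ pointwise, with no conditions needed on $a$, $n$, or $\delta$. (The fact that the lower bound is $0$ is what makes $q^\pm$ a valid, though possibly non-unit, positive measure.)

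Next, I would recall that $q$ is obtained by selecting whichever of $q^+,q^-$ has mass at least $1$ (one must, since by the symmetry argument in \Cref{def:q} the masses sum to $2$), and then multiplying by the normalizing factor $b \in [\tfrac{1}{2},1]$. Crucially, $b \le 1$: we are downscaling, not upscaling. Therefore
\begin{equation*}
\frac{\d q}{\d p}(x) \;=\; b \cdot \frac{\d q^{\pm}}{\d p}(x) \;\le\; 1 \cdot 2 \;=\; 2,
\end{equation*}
which is exactly the desired bound.

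There is no real obstacle here; the entire content of the lemma is that the truncation range $[-1,1]$ in the scaling factor was chosen precisely so that $\frac{\d q^\pm}{\d p} \le 2$, and that normalization can only shrink this ratio. The only care required is to verify that the chosen branch of $q^\pm$ is downscaled rather than upscaled, which follows from the symmetry observation that one of the two non-unit measures must already have mass at least $1$.
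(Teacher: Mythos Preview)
Your proof is correct and follows essentially the same approach as the paper's: both observe that the truncated scaling factor $1+\min(1,\max(-1,\pm ax))$ lies in $[0,2]$ by construction, and that the subsequent normalization by $b\le 1$ can only decrease $\frac{\d q}{\d p}$. Your additional remark that the hypothesis on $\frac{\ld}{n}$ is not actually needed here is accurate.
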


\begin{proof}
For $q^+$, we have $\frac{\d q^+}{\d p}(x) = 1+\min(1,\max(-1,a^+x))$ for some value of $a^+$, and the right hand side is always between $0$ and $2$; proven similarly, we have $0 \le \frac{\d q^-}{\d p} \le 2$. Noting that $q$ is constructed by scaling down one of $q^-$ and $q^+$, we also have $\frac{\d q}{\d p} \le 2$.
\end{proof}

\subsection{Bounding the squared Hellinger distance}
\label{sec:hellinger_distance}

We consider each case of the construction in \Cref{def:q} separately.
We first bound the Hellinger distance between $p$ and the $q$ constructed in Case 1 of \Cref{def:q} (when $|\mu_p - \mu_{p^*_n}|$ is large).

\begin{restatable}[]{lemma}{HellingerDistanceCaseOne}
\label{lem:hellinger-distance-case-1}
Suppose there is a sufficiently small constant that upper bounds both $\frac{\ld}{n}$ and $\delta$.
Given a distribution $p$, if we construct $q$ as in Case 1 (the large $|\mu_{p^*_n}-\mu_p|$ case) in \Cref{def:q}, then $\log(1-\DH^2(p,q)) \ge \frac{1}{2n}\log 4\delta$.
\end{restatable}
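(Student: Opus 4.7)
The plan is to exploit the joint convexity of the squared Hellinger distance, so that we can bypass computing the Bhattacharyya coefficient for the mixture $q = \tfrac{3}{4}p + \tfrac{1}{4}p^*_n$ directly. Since $\DH^2$ is an $f$-divergence with convex generator $f(t) = (1-\sqrt{t})^2$, it is jointly convex in its two arguments. Writing the left argument as $p = \tfrac{3}{4}p + \tfrac{1}{4}p$ and applying joint convexity yields
\[
\DH^2(p, q) \;\le\; \tfrac{3}{4}\DH^2(p, p) + \tfrac{1}{4}\DH^2(p, p^*_n) \;=\; \tfrac{1}{4}\DH^2(p, p^*_n),
\]
so it suffices to bound $\DH^2(p, p^*_n)$.

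By definition $p^*_n$ is $p$ conditioned on an interval $S$ with $p(S) = 1 - t$, where $t = \tfrac{0.45}{n}\ld$. Hence $\tfrac{\d p^*_n}{\d p}$ equals $\tfrac{1}{1-t}$ on $S$ and $0$ off $S$, so
\[
\int \sqrt{\d p \cdot \d p^*_n} \;=\; \int_S \frac{\d p}{\sqrt{1-t}} \;=\; \sqrt{1-t},
\]
giving $\DH^2(p, p^*_n) = 1 - \sqrt{1-t} \le t$. Combining with the convexity step, $\DH^2(p,q) \le t/4$. To convert this into the desired logarithmic form, I would use the elementary inequality $\log(1-x) \ge -2x$ valid for $x \in [0, \tfrac{1}{2}]$ (applicable once $\ld/n$ is a small enough constant), obtaining
\[
\log\!\bigl(1 - \DH^2(p,q)\bigr) \;\ge\; -\tfrac{t}{2} \;=\; -\tfrac{0.225}{n}\ld.
\]
The target $\tfrac{1}{2n}\log 4\delta = \tfrac{\log 4 - \ld}{2n}$ is no larger than this quantity precisely when $0.55\,\ld \ge \log 4$, i.e.~whenever $\delta$ is below an absolute constant, which the hypothesis of the lemma guarantees.

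The main obstacle I anticipate is purely numerical rather than conceptual: tracking the constants $\tfrac{3}{4}$ (the mixing weight), $0.45$ (the trimming constant), the $\sqrt{1-t}$ factor from the Hellinger calculation, and $\log 4$ carefully enough that the final inequality holds under the assumed smallness of $\delta$ and $\ld/n$. The joint-convexity step does introduce some slack---a direct computation of $\int\sqrt{\d p \cdot \d q}$, using that $\tfrac{\d q}{\d p}$ takes only the two values $\tfrac{3}{4}+\tfrac{1}{4(1-t)}$ on $S$ and $\tfrac{3}{4}$ off $S$, yields the sharper expansion $\DH^2(p,q) = \tfrac{7/4 - \sqrt{3}}{2}\,t + O(t^2)$, roughly an order of magnitude tighter---but this slack is comfortably within the budget, so the cleaner convexity route will suffice.
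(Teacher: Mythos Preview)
Your proposal is correct and takes a genuinely different route from the paper. The paper computes $1-\DH^2(p,q)$ directly: noting that $\frac{\d q}{\d p}$ is piecewise constant (equal to $\lambda=\tfrac{3}{4}$ off the trimming interval and $\tfrac{1-\lambda t}{1-t}$ on it, with $t=\tfrac{0.45}{n}\ld$), one gets the closed form $1-\DH^2(p,q)=\sqrt{\lambda}\,t+\sqrt{(1-\lambda t)(1-t)}$, and the authors then prove a small convexity lemma establishing $\sqrt{\lambda}\beta+\sqrt{(1-\lambda\beta)(1-\beta)}\ge e^{(\lambda-1)\beta}$ for $\lambda\in[\tfrac{3}{4},1]$, which yields $\log(1-\DH^2(p,q))\ge(\lambda-1)t$ and hence the claim after routine numerics.

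Your joint-convexity reduction bypasses that auxiliary lemma entirely: $\DH^2(p,q)\le\tfrac{1}{4}\DH^2(p,p^*_n)=\tfrac{1}{4}(1-\sqrt{1-t})\le\tfrac{t}{4}$, and the remaining arithmetic is essentially the same. The paper's direct route is sharper (its leading term is $(1-\lambda)t=\tfrac{t}{4}$ for the \emph{log}, whereas you get $\tfrac{t}{4}$ for $\DH^2$ and then lose a factor $2$ in $\log(1-x)\ge-2x$; and as you note, the true leading coefficient of $\DH^2$ is $\tfrac{7-4\sqrt{3}}{8}\approx 0.009$), but that extra precision is not needed here. What your approach buys is conceptual cleanliness and generality: the same one-line convexity step would handle any mixing weight $\lambda$ without re-proving an ad hoc inequality.
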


Since Case 1 of the construction of $q$ in \Cref{def:q} linearly interpolates between $p$ and---a very slightly trimmed version of $p$, namely---$p^*_n$, the resulting distribution $q$ remains close to $p$; the calculation is in \Cref{app:proof_hellinger_distance}.
The next lemma bounds the squared Hellinger distance of $p$ and $q$ in Case 2 of \Cref{def:q} (when $|\mu_p - \mu_{p^*_n}|$ is small).

\begin{restatable}[]{lemma}{HellingerDistanceCaseTwo}
\label{lem:hellinger-distance-case-2}
Suppose there is a sufficiently small constant that upper bounds both $\frac{\ld}{n}$ and $\delta$.
Given a distribution $p$, if we construct $q$ as in Case 2 (the small $|\mu_{p^*_n}-\mu_p|$ case) in \Cref{def:q}, then $\log(1-\DH^2(p,q)) \ge \frac{1}{2n}\log 4\delta$.
\end{restatable}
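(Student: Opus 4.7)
The plan is to estimate $1 - \DH^2(p, q) = \Exp_p[\sqrt{\d q/\d p}]$ directly. Since $\d q/\d p = b \cdot (1 + h)$ with $h(X) = \min(1, \max(-1, \pm aX))$ and $b = 1/\Exp_p[1 + h]$, this rewrites as $1 - \DH^2(p, q) = \Exp_p[\sqrt{g}]/\sqrt{\Exp_p[g]}$ where $g = 1 + h$. Without loss of generality we pick the sign of $a$ so that $\Exp_p[h] \ge 0$ (which is precisely the mass-based choice between $q^+$ and $q^-$), and also assume $\mu_p = 0$. The proof then reduces to bounding a Jensen-gap-type quantity.

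First, I would establish a second-moment upper bound on $\DH^2(p, q)$ by Taylor expansion. Using the piecewise lower bounds $\sqrt{1+y} \ge 1 + y/2 - y^2/8$ for $y \in [0, 1]$ and $\sqrt{1+y} \ge 1 + y/2 - y^2/2$ for $y \in [-1, 0]$, combined with the simple upper bound $\sqrt{1 + \Exp_p[h]} \le 1 + \Exp_p[h]/2$ (valid since $\Exp_p[h] \ge 0$), straightforward manipulation yields
\[
\DH^2(p, q) \;\le\; \tfrac{1}{8}\, \Exp_p\!\left[h^2\, \1_{h \ge 0}\right] + \tfrac{1}{2}\, \Exp_p\!\left[h^2\, \1_{h < 0}\right].
\]

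Next, I would bound the right-hand side as $O(\ld/n)$, splitting into two subcases based on whether $1/a \ge r$ or $1/a < r$, where $[\mu_p - r, \mu_p + r]$ is the support of $p^*_n$. In the first subcase, the constraint $a \le \sqrt{\ld/n}/\sigma_{p^*_n}$ together with $\int_{|X| \le r} X^2\, \d p \le \sigma_{p^*_n}^2(1 + O(\ld/n))$ (using the Case~2 hypothesis $|\mu_{p^*_n}| \le \sigma_{p^*_n}\sqrt{4.5\ld/n}$) controls the interior contribution, while tails beyond $r$ contribute at most $0.45\ld/n$. In the second subcase, the defining moment equation for $a$ directly provides $a \int_{|X| \le 1/a} X^2\, \d p \le \tfrac{1}{8}\sigma_{p^*_n}\sqrt{\ld/n}$ and $(1/a) \Pr[|X| > 1/a] \le \tfrac{1}{8}\sigma_{p^*_n}\sqrt{\ld/n}$; combined with $a\sigma_{p^*_n} \le \sqrt{\ld/n}$, each yields an $O(\ld/n)$ bound on the corresponding contribution to $\Exp_p[h^2]$.

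The main obstacle lies in achieving a small enough absolute constant: for the conclusion $\log(1 - \DH^2) \ge \tfrac{1}{2n} \log 4\delta$ to follow from $\log(1-x) \ge -x - x^2$, we need $\DH^2 \le \ld/(2n) \cdot (1 - o(1))$. The $1/2$ coefficient on $\Exp_p[h^2\, \1_{h<0}]$ above is individually tight (as $y \to -1$), so a naive sum could give $\DH^2 \lesssim \Exp_p[h^2]/2$, which is too large by a constant factor. Avoiding this requires leveraging structural constraints: the ``worst-case'' mass near $h = -1$ only lives on $\{X \le -1/a\}$, whose probability is bounded by $\Pr[|X| > r] = 0.45\ld/n$ (when $1/a \ge r$) or by the moment equation bound (when $1/a < r$); and the $\mu_p = 0$ constraint together with the mass-based sign choice prevents the remaining ``middle'' $h^2$ mass from concentrating entirely on the $h<0$ side. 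Carefully accounting for these constraints yields $\DH^2(p, q) \le c \cdot \ld/n$ for some $c < 1/2$, completing the proof after the $\log(1-x)$ inequality and the assumed smallness of $\ld/n$ and $\delta$.
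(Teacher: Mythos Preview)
Your overall strategy is sound and would succeed, but you are making the constant-tracking harder than necessary by splitting into subcases based on whether $1/a \ge r$. The defining equation for $a$ holds \emph{regardless} of which subcase you are in, so the argument you sketch for ``subcase 2'' works uniformly: since each term on the left of the defining equation is nonnegative, you always have
\[
a\int_{|x|\le 1/a} x^2\,\d p \le \tfrac{1}{8}\sigma_{p^*_n}\sqrt{\tfrac{\ld}{n}}
\quad\text{and}\quad
\tfrac{1}{a}\Pr\bigl[|X|>\tfrac{1}{a}\bigr]\le \tfrac{1}{8}\sigma_{p^*_n}\sqrt{\tfrac{\ld}{n}},
\]
and hence $\Exp_p[h^2]=\int\min(1,(ax)^2)\,\d p \le a\cdot\tfrac{1}{4}\sigma_{p^*_n}\sqrt{\ld/n}\le \tfrac{1}{4}\ld/n$. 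Plugging this into your own Taylor bound (which is at most $\tfrac12\Exp_p[h^2]$) already gives $\DH^2\le \ld/(8n)$, comfortably below the threshold; your ``main obstacle'' paragraph is then unnecessary.

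For comparison, the paper takes a slightly different and shorter path. Rather than handle the normalization $b$ via the ratio $\Exp_p[\sqrt{g}]/\sqrt{\Exp_p[g]}$ and piecewise Taylor bounds, it first proves a small monotonicity lemma showing $\DH^2(p,bq^+)\le \DH^2(p,q^+)$ whenever $q^+$ has mass $\ge 1$, thereby removing $b$ entirely. It then uses the single pointwise inequality $(1-\sqrt{1+y})^2\le y^2$ for $|y|\le 1$ to get $\DH^2(p,q^+)\le \tfrac12\int\min(1,(ax)^2)\,\d p$, and finishes by factoring out $a$ and recognizing the defining equation directly (no case split on $r$), yielding $\DH^2\le \ld/(16n)$. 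Your route is more hands-on with the square-root expansion but recovers the same bound once you apply the defining-equation step globally; the paper's monotonicity reduction is what lets it avoid your sign-split and Jensen-gap bookkeeping altogether.
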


The proof (see \Cref{app:proof_hellinger_distance}) uses a technical lemma to relate $d_H(p,q)$ to $d_H(p,q^+)$ or $d_H(p,q^-)$, and then uses a linearization of the definition of Hellinger distance to relate it to the mean shift between $p$ and $q^+$ or $q^-$, which is bounded by \Cref{def:q}.

\subsection{Lower bounding $|\mu_q - \mu_p|$}

We show the lower bound separately for the two cases in the construction of $q$ in \Cref{def:q}.
The small $|\mu_{p^*_n}-\mu_p|$ case is a direct corollary of \Cref{lem:a_asym}, which bounds the mean shift in this case.

\begin{lemma}
Suppose there is a sufficiently small constant that upper bounds both $\frac{\ld}{n}$ and $\delta$.
Given a distribution $p$, if we construct $q$ as in Case 1 (the large $|\mu_{p^*_n}-\mu_p|$ case) in \Cref{def:q}, then $|\mu_q-\mu_p| \ge \frac{1}{8} \eps_{n,\delta}(p)$.
\end{lemma}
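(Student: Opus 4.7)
The plan is to verify the claim by direct computation, since Case 1 defines $q$ as the explicit mixture $q = \lambda p + (1-\lambda) p^*_n$ with $\lambda = 3/4$, and the mean of a mixture is the corresponding convex combination of the component means.

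First, I would compute $\mu_q$. Because $q$ is a weighted average of $p$ and $p^*_n$, we have
\[ \mu_q = \lambda \mu_p + (1-\lambda)\mu_{p^*_n} = \tfrac{3}{4}\mu_p + \tfrac{1}{4}\mu_{p^*_n}, \]
so
\[ |\mu_q - \mu_p| = (1-\lambda)\,|\mu_{p^*_n} - \mu_p| = \tfrac{1}{4}\,|\mu_{p^*_n} - \mu_p|. \]

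Next, I would use the Case 1 hypothesis $|\mu_p - \mu_{p^*_n}| > \sigma_{p^*_n}\sqrt{4.5\,\ld/n}$ to bound $\eps_{n,\delta}(p)$ from above in terms of $|\mu_p - \mu_{p^*_n}|$. By definition,
\[ \eps_{n,\delta}(p) = |\mu_p - \mu_{p^*_n}| + \sigma_{p^*_n}\sqrt{\tfrac{4.5\,\ld}{n}} < 2\,|\mu_p - \mu_{p^*_n}|. \]
Combining the two displays gives
\[ |\mu_q - \mu_p| = \tfrac{1}{4}\,|\mu_{p^*_n} - \mu_p| > \tfrac{1}{4} \cdot \tfrac{1}{2}\,\eps_{n,\delta}(p) = \tfrac{1}{8}\,\eps_{n,\delta}(p), \]
which is the claimed bound.

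There is no genuine obstacle here: the computation is a one-line consequence of the mixture definition of $q$ together with the defining inequality of Case 1. The smallness assumption on $\ld/n$ and $\delta$ is not needed for this particular lemma (it was needed for the $\frac{\d q}{\d p} \le 2$ bound and the Hellinger bound); we only rely on $\lambda = 3/4$ being a fixed constant strictly less than $1$, which is what makes $1-\lambda = 1/4$ yield the stated constant $1/8$.
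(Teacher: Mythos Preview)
Your proof is correct and follows essentially the same approach as the paper's own proof: compute $|\mu_q-\mu_p|=\tfrac{1}{4}|\mu_{p^*_n}-\mu_p|$ from the mixture definition, then use the Case~1 inequality to bound $\eps_{n,\delta}(p)\le 2|\mu_{p^*_n}-\mu_p|$ and combine. Your observation that the smallness hypothesis is not actually used here is also accurate.
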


\begin{proof}
Without loss of generality, assume that $\mu_p = 0$.
Recall that $q = \lambda p + (1 - \lambda) p^*_n$ and $\lambda = \frac{3}{4}$ from Case 1 of \Cref{def:q}.
Thus, $|\mu_q - \mu_p| = \frac{1}{4}|\mu_{p^*_n} - \mu_p|$.
Furthermore, we have $\eps_{n,\delta}(p) = |\mu_{p^*_n} - \mu_p| + \sigma_{p^*_n}\sqrt{4.5\frac{\ld}{n}}$ from the definition of $\eps_{n, \delta}(p)$ and $|\mu_{p^*_n} - \mu_p| > \sigma_{p^*_n}\sqrt{4.5\frac{\ld}{n}}$ from the lemma assumption and Case 1 of \Cref{def:q}, which then imply that $\eps_{n, \delta}(p) \le 2 |\mu_{p^*_n} - \mu_p|$.
Combining the two inequalities yields $|\mu_q - \mu_p| \ge \frac{1}{8} \eps_{n,\delta}(p)$ as desired.
\end{proof}

\begin{lemma} \label{lem:mean-shift-bound-case-small}
Suppose there is a sufficiently small constant that upper bounds both $\frac{\ld}{n}$ and $\delta$.
Given a distribution $p$, if we construct $q$ as in Case 2 (the small $|\mu_{p^*_n}-\mu_p|$ case) in \Cref{def:q}, then $|\mu_q-\mu_p| \ge \frac{1}{32} \eps_{n,\delta}(p)$.
\end{lemma}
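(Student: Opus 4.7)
The plan is to compute the mean shift of $q$ from $p$ by directly unpacking the Case 2 construction and invoking the defining equation for the parameter $a$. Without loss of generality I assume $\mu_p = 0$, and by the symmetry between $q^+$ and $q^-$ I may further assume the construction selects $q = b\cdot q^+$ for some downscaling factor $b \in [\tfrac12, 1]$ chosen so that $q$ has total mass $1$.

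First I would compute the unnormalized first moment of $q^+$ directly from its definition. Since $\frac{\d q^+}{\d p}(x) = 1 + \min(1,\max(-1,ax))$, I have
\begin{equation*}
\int x \, \d q^+ \;=\; \int x \,\d p \;+\; \int x\,\min(1,\max(-1,ax))\,\d p.
\end{equation*}
The first term is $\mu_p = 0$. Splitting the second integral into the three regions $\{x < -1/a\}$, $\{-1/a \le x \le 1/a\}$, and $\{x > 1/a\}$ (where the truncation clamps at $-1$, $ax$, and $+1$ respectively), I recover exactly the left-hand side of the equation in \Cref{def:q} defining $a$, and hence
\begin{equation*}
\int x \, \d q^+ \;=\; \tfrac{1}{8}\,\sigma_{p^*_n}\sqrt{\tfrac{\ld}{n}}.
\end{equation*}

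Next, since $q = b\cdot q^+$ and $b \ge \tfrac12$, the mean of $q$ satisfies $|\mu_q - \mu_p| = b\cdot \big|\int x\,\d q^+\big| \ge \tfrac{1}{16}\,\sigma_{p^*_n}\sqrt{\ld/n}$. I then convert this into a bound in terms of $\eps_{n,\delta}(p)$ using the Case 2 hypothesis: $|\mu_p - \mu_{p^*_n}| \le \sigma_{p^*_n}\sqrt{4.5\,\ld/n}$, which implies $\eps_{n,\delta}(p) = |\mu_p - \mu_{p^*_n}| + \sigma_{p^*_n}\sqrt{4.5\,\ld/n} \le 2\sigma_{p^*_n}\sqrt{4.5\,\ld/n}$. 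Rearranging gives $\sigma_{p^*_n}\sqrt{\ld/n} \ge \eps_{n,\delta}(p)/(2\sqrt{4.5})$, and plugging this in yields the claimed lower bound of the form $\tfrac{1}{c}\eps_{n,\delta}(p)$ for some absolute constant $c$.

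There is essentially no substantive obstacle: the argument is a single direct computation, once one sees that the three-region decomposition of $\int x\,\d q^+$ matches the defining equation of $a$ term by term. The only care needed is in the bookkeeping of the normalization $b$ (ensuring that choosing the measure of mass $\ge 1$ yields $b\ge \tfrac12$ and preserves the sign of the mean shift) and in the final constant tracking when dividing through by $\sqrt{4.5}$; any looseness here is absorbed into the stated constant $\tfrac{1}{32}$.
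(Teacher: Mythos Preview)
Your proposal is correct and follows essentially the same approach as the paper: both use the defining equation for $a$ to identify $\int x\,\d q^+ = \tfrac{1}{8}\sigma_{p^*_n}\sqrt{\ld/n}$, apply the bound $b\ge\tfrac12$ to get $|\mu_q-\mu_p|\ge\tfrac{1}{16}\sigma_{p^*_n}\sqrt{\ld/n}$, and then invoke the Case~2 hypothesis to conclude $\eps_{n,\delta}(p)\le 2\sigma_{p^*_n}\sqrt{4.5\,\ld/n}$. Your additional remark that the three-region decomposition exactly matches the defining equation, and your observation about the final constant tracking, are both apt and mirror the paper's reasoning.
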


\begin{proof}
Without loss of generality, let $q^+$ be the non-unit measure used to construct $q$, that is $q = b q^+$ for some $b \in [\frac{1}{2},1]$.
By the bound on $b$ as well as the construction of $q^+$ in \Cref{def:q}, we have $|\mu_q - \mu_p| = b |\int x \, \d q^+ - \int x \, \d p| \ge \frac{1}{16} \sigma_{p_n^*} \sqrt{\frac{\ld}{n}}$.
Additionally, we both have $\eps_{n,\delta}(p) = |\mu_{p^*_n} - \mu_p| + \sigma_{p^*_n}\sqrt{4.5\frac{\ld}{n}}$ from the definition of $\eps_{n, \delta}(p)$ and $|\mu_{p^*_n} - \mu_p| \le \sigma_{p^*_n}\sqrt{4.5\frac{\ld}{n}}$ from the lemma assumption and Case 2 of \Cref{def:q}, both of which imply $\eps_{n, \delta}(p) \le 2\sigma_{p^*_n}\sqrt{4.5\frac{\ld}{n}}$.
Combining the two inequalities above, we have that $|\mu_q-\mu_p| \ge \frac{1}{32} \eps_{n,\delta}(p)$.
\end{proof}

\section{Neighborhood Optimality: A New Definition Framework}\label{sec:neighborhood-optimality}

Our main result is a specific and technical indistinguishability result.
This section aims to clarify, through a new definition framework, the optimality notion that our technical result implies.

\subsection{Neighborhood Optimality}

Usual notions of ``beyond worst-case optimality'' include ``instance optimality'' which is unattainably strong, and ``admissibility''/``Pareto efficiency'' from the statistics and economics literature, which is too weak.
In particular, the latter notion is too weak in the sense that a trivial estimator that always outputs the same hardcoded mean estimate is actually admissible, despite being algorithmically ``vacuous''.
We define and explain these notions formally in \Cref{app:optdefn}.

Given that neither of the usual definitions are suitable for mean estimation, in this work we give a new optimality definition, which we call \emph{neighborhood optimality}.
We state its definition in this section, and explore its basic properties and intuition in \Cref{app:neighborhoodoptimality}, including how the definition smoothly interpolates between instance optimality and admissibility.
Our definition is also related to the notion of local minimax optimality, which we compare with in \Cref{app:localminimax}.
The differences are subtle, yet, as we show in \Cref{app:localminimax}, local minimax is \emph{too weak} a notion and, when instantiated inappropriately, allows for absurd bounds to be proven.
We thus advocate for this new optimality definition, which correctly rejects such absurd bounds.
As an application of our new framework, we prove in \Cref{sec:MoM-optimal} that the median-of-means estimator is neighborhood optimal up to constant factors.
It is an open question to find a neighborhood optimal estimator \emph{without} the constant factor slackness.

Let $\cP_1$ be the entire set of all distributions with a finite first moment over $\Real$.
We say that $N$ is a neighborhood function (defined over $\cP_1$) if $N$ maps a distribution $p \in \cP_1$ to a set of distributions $N(p) \subseteq \cP_1$.
For the purposes of the rest of the definitions, it will not matter whether $p \in N(p)$.
Similarly, an error function $\eps$ maps distributions to non-negative numbers, like $\eps_{n,\delta}$ in our main result, \Cref{thm:main}.
In the later definitions, we use the notations $N_{n,\delta}$ and $\eps_{n,\delta}$ to denote their dependence on the sample complexity $n$ and failure probability $\delta$. 

Given these two notions, we can now define \emph{neighborhood Pareto bounds with respect to $N_{n,\delta}$}, which imposes admissibility structure within the local neighborhood $N_{n,\delta}(p)$ of every distribution $p \in \cP_1$.
\begin{definition}[Neighborhood Pareto bounds with respect to $N_{n,\delta}$]
\label{def:lb}
Let $n$ be the number of samples and $\delta$ be the failure probability.
Given a neighborhood function $N_{n,\delta} : \cP_1 \to 2^{\cP_1}$, we say that the error function $\eps_{n,\delta}(p) : \cP_1 \to \mathbb{R}_0^+$ is a neighborhood Pareto bound for $\cP_1$ with respect to $N_{n,\delta}$ if for all distributions $p \in \cP_1$, \emph{no} estimator $\muhat$ taking $n$ i.i.d.~samples can simultaneously achieve the following two conditions:
\begin{itemize}[leftmargin=*,itemsep=0.05em,listparindent=1.5em]
    \item For all $q \in N_{n,\delta}(p)$, with probability $1-\delta$ over the $n$ i.i.d.~samples from $q$, $|\muhat-\mu_q| \le \eps_{n,\delta}(q)$.
    \item With probability $1-\delta$ over the $n$ i.i.d.~samples from $p$, $|\muhat-\mu_p| < \eps_{n,\delta}(p)$.
\end{itemize}
\end{definition}

Note the strict inequality in the second bullet: namely, it is impossible to ``beat'' the error function over an entire neighborhood, where ``beating'' is defined as attaining the error function over the neighborhood, and performing strictly better than the error function for $p$.
The above two bullet points essentially capture admissibility within the local neighborhood $N_{n,\delta}(p) \cup \{p\}$---compare with \Cref{def:admissibility}---and the definition requires admissibility within every such local neighborhood, over every possible $p$.

The neighborhood $N_{n,\delta}(p)$ in a neighborhood Pareto bound can be interpreted as the set of distributions ``near $p$'' which, if an estimator performs well on distribution $p$, then we should reasonably expect or want it to perform well also on all the distributions in the local neighborhood $N_{n,\delta}(p)$.

Using the notion of neighborhood Pareto bounds, we can now define $\kappa$-neighborhood optimal estimators, which are estimators whose performances are matched by neighborhood Pareto bounds.

\begin{definition}[$(\kappa,\tau)$-Neighborhood optimal estimators]
\label{def:neighborhoodoptimality}
Let $\kappa > 1$ be a multiplicative loss factor in estimation error, and $\tau > 1$ be a multiplicative loss factor in sample complexity.
Given the parameters $\kappa,\tau > 1$, sample complexity $n$, failure probability $\delta$ and neighborhood function $N_{n,\delta}$, a mean estimator $\muhat$ is $(\kappa,\tau)$-neighborhood optimal with respect to $N_{n,\delta}$ if there exists an error function $\eps_{n,\delta}(p)$ such that $\min(\eps_{ n/\tau,\delta}(p),\eps_{n,\delta}(p))$ is a neighborhood Pareto bound\footnote{While it is intuitive to expect that an error function decreases in $n$, it might not be true in general.
Indeed, the definition of $\eps_{n,\delta}(p)$ we use in the main result is not necessarily monotonic.
This is why we use a $\min$ in the neighborhood Pareto bound requirement.}, and $\muhat$ gives estimation error at most $\kappa\cdot \eps_{n,\delta}(p)$ with probability at least $1-\delta$ when taking $n$ i.i.d.~samples from any distribution $p \in \cP_1$.
\end{definition}

As a basic example and sanity check, in \Cref{app:sanity-check}, we show that any trivial estimator that outputs a hardcoded mean estimate cannot be $\kappa$-neighborhood optimal with respect to our chosen neighborhood function (\Cref{def:neighborhood} in \Cref{sec:MoM-optimal}) for any $\kappa$.

\subsection{Indistinguishability implies a neighborhood Pareto bound}
\label{sec:indistinguishability}

Even though it might not look obvious how we can prove a neighborhood Pareto bound from its definition, we show that our main indistinguishability result essentially implies such a bound.
The proof essentially follows the straightforward estimation-to-testing reduction intuition, and we give it formally in \Cref{app:indistinguishability}.

\begin{restatable}[``Local'' indistinguishability bounds imply neighborhood Pareto bounds]{proposition}{Indistinguishability}
\label{prop:indistinguishability}
The error function $\eps_{n,\delta}$ is a neighborhood Pareto bound with respect to the neighborhood function $N_{n,\delta}$ if for every distribution $p \in \cP_1$, there exists a distribution $q \in N_{n,\delta}(p)$, with $q \ne p$, such that $|\mu_p - \mu_q| \ge \eps_{n,\delta}(p)+\eps_{n,\delta}(q)$ and it is information-theoretically impossible to distinguish $p$ and $q$ with probability $1-\delta$ using $n$ samples.
\end{restatable}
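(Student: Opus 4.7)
The plan is to proceed by contradiction via a standard testing-to-estimation reduction. Suppose $\eps_{n,\delta}$ fails to be a neighborhood Pareto bound with respect to $N_{n,\delta}$. Unpacking \Cref{def:lb}, this means there exist a distribution $p \in \cP_1$ and an estimator $\muhat$ that simultaneously achieves: (a) for every $q' \in N_{n,\delta}(p)$, $|\muhat-\mu_{q'}| \le \eps_{n,\delta}(q')$ with probability $\ge 1-\delta$ on $n$ i.i.d.\ samples from $q'$, and (b) $|\muhat-\mu_p| < \eps_{n,\delta}(p)$ with probability $\ge 1-\delta$ on $n$ samples from $p$. I would then invoke the hypothesis of the proposition at this $p$ to select a single ``witness'' distribution $q \in N_{n,\delta}(p)$ with $q \neq p$, with $|\mu_p-\mu_q| \ge \eps_{n,\delta}(p)+\eps_{n,\delta}(q)$, and with $p,q$ information-theoretically indistinguishable at confidence $1-\delta$ using $n$ samples.

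The next step is to convert the estimator into a tester $T$ and derive a contradiction with indistinguishability. Specifically, I would define $T$ to take $n$ samples, run $\muhat$ on them, and output ``$p$'' if $|\muhat-\mu_p| < \eps_{n,\delta}(p)$ and ``$q$'' otherwise. Under samples from $p$, condition (b) directly yields that $T$ outputs ``$p$'' with probability $\ge 1-\delta$. Under samples from $q$, condition (a) applied to the specific $q$ gives $|\muhat-\mu_q| \le \eps_{n,\delta}(q)$ with probability $\ge 1-\delta$; combining this with the triangle inequality and the separation bound yields
\[
|\muhat-\mu_p| \;\ge\; |\mu_p-\mu_q|-|\muhat-\mu_q| \;\ge\; \bigl(\eps_{n,\delta}(p)+\eps_{n,\delta}(q)\bigr) - \eps_{n,\delta}(q) \;=\; \eps_{n,\delta}(p),
\]
so $T$ outputs ``$q$'' with probability $\ge 1-\delta$. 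Thus $T$ distinguishes $p$ from $q$ at confidence $\ge 1-\delta$ using $n$ samples, contradicting the assumed indistinguishability and completing the proof.

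There is no real obstacle; this is a textbook estimation-to-testing reduction. The only point requiring a bit of care is the interplay between the strict inequality in bullet (b) of \Cref{def:lb} and the non-strict inequality in bullet (a): precisely because the separation in the hypothesis uses the \emph{sum} $\eps_{n,\delta}(p)+\eps_{n,\delta}(q)$ of both local errors rather than a single radius, the two ``acceptance regions'' for $\muhat$ under $p$ and under $q$ are disjoint, and the tester is well-defined and correct on both sides regardless of whether the comparison is strict or not.
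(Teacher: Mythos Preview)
Your proposal is correct and essentially identical to the paper's own proof: both argue by contradiction, extract the offending $p$ and $\muhat$, pick the witness $q$ from the hypothesis, and build the distinguisher that outputs ``$p$'' when $\muhat$ lands within $\eps_{n,\delta}(p)$ of $\mu_p$ and ``$q$'' otherwise. Your write-up is in fact slightly more explicit than the paper's, spelling out the triangle inequality step and the strict/non-strict inequality interplay.
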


\section{Median-of-Means is Neighborhood Optimal}\label{sec:MoM-optimal}
To apply our new definitional framework, we choose a reasonable neighborhood function $N_{n,\delta}$ and show that the median-of-means algorithm is neighborhood optimal with respect to this choice.

In \Cref{sec:MoM}, we give the following (straightforward) re-analysis of median-of-means, which will form the upper bound part for neighborhood optimality.

\begin{restatable}[]{proposition}{MoMerror}
\label{prop:MoM-error}
Consider a distribution $p$ with mean $\mu_p$, a sample size $n$, and a median-of-means group count $4.5 \ld$.
Let $p^*_n$ be the $\frac{0.45}{n} \ld$-trimmed distribution from \Cref{def:trimmed}, 
and $\mu_{p^*_n}$ and $\sigma_{p^*_n}$ be the mean and standard deviation of $p^*_n$ respectively. 
Then, the median-of-means estimator has error $|\mu_p - \mu_{p^*_n}| + 3 \sigma_{p^*_n}\sqrt{\frac{4.5 \ld}{n}}$ except with probability at most $\delta$.
\end{restatable}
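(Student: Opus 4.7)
The plan is to carry out the classical median-of-means analysis, but with the crucial twist of bounding each per-group concentration against the \emph{trimmed} distribution $p^*_n$ rather than $p$ itself.  This lets us apply Chebyshev's inequality with the finite variance $\sigma_{p^*_n}^2$ even when $p$ has infinite (or undefined) variance, at the cost of a $|\mu_p - \mu_{p^*_n}|$ bias that is absorbed by the triangle inequality at the end.

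First I would partition the $n$ samples into $k := 4.5\ld$ groups of size $m := n/k$, and define $\muhat$ as the median of the $k$ group sample means.  Call a sample an \emph{outlier} if it lies outside the trimming window defining $p^*_n$; by \Cref{def:trimmed}, each sample is independently an outlier with probability exactly $t := 0.45\ld/n$, so $mt = 0.1$ and a union bound shows any given group contains an outlier with probability at most $0.1$.  Conditioned on a group being \emph{clean} (no outliers), its samples are i.i.d.\ from $p^*_n$, by the very definition of $p^*_n$ as the conditional distribution of $p$ on the trimming window; hence the group mean has mean $\mu_{p^*_n}$ and variance $\sigma_{p^*_n}^2 / m = \sigma_{p^*_n}^2 \cdot 4.5\ld/n$, and Chebyshev's inequality at threshold $3$ bounds the conditional probability of the group mean deviating from $\mu_{p^*_n}$ by more than $3\sigma_{p^*_n}\sqrt{4.5\ld/n}$ by $1/9$.

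Calling a group \emph{bad} if its sample mean deviates from $\mu_{p^*_n}$ by more than $3\sigma_{p^*_n}\sqrt{4.5\ld/n}$, the decomposition $P(\text{bad}) \le P(\text{outlier}) + P(\text{bad} \mid \text{clean}) \cdot P(\text{clean}) \le 0.1 + (1/9)(0.9) = 0.2$ bounds the per-group bad probability.  Since the $k$ groups use disjoint samples, the badness indicators are mutually independent, so Chernoff's KL bound gives $P(\#\{\text{bad groups}\} \ge k/2) \le \exp(-k \cdot \DKL(1/2 \,\|\, 0.2))$, which one checks is at most $\delta$ since $4.5 \cdot \DKL(1/2 \,\|\, 0.2) \ge 1$.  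On the complementary event, strictly more than half the group means lie in the interval $[\mu_{p^*_n} - 3\sigma_{p^*_n}\sqrt{4.5\ld/n},\, \mu_{p^*_n} + 3\sigma_{p^*_n}\sqrt{4.5\ld/n}]$, forcing the median $\muhat$ to lie in the same interval; the triangle inequality against $|\mu_p - \mu_{p^*_n}|$ then yields the claimed error bound.

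The main obstacle is just constant-tracking: the statement's numbers---$4.5\ld$ groups, trimming fraction $0.45\ld/n$, and Chebyshev threshold $3$---have been tuned simultaneously so that expected outliers per group equals a small constant ($0.1$), the per-group bad probability sits strictly below $1/2$ by the margin needed for the final Chernoff exponent to exceed $\ld$, and the per-group Chebyshev deviation is the promised $3\sigma_{p^*_n}\sqrt{4.5\ld/n}$.  Everything else---partition, Chebyshev, KL-Chernoff, median trick, triangle inequality---is entirely standard.
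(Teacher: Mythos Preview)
Your proposal is correct and follows essentially the same approach as the paper: a per-group union bound over outliers (probability $\le 0.1$), Chebyshev conditionally on the clean event (probability $\le 1/9$), combining to a per-group failure probability of $1/5$, then a Chernoff bound on the number of bad groups among $k=4.5\ld$. Your KL-Chernoff bound $\exp(-k\,\DKL(1/2\,\|\,1/5))$ is in fact identical to the paper's explicit MGF optimization, since $\DKL(1/2\,\|\,1/5)=\log(5/4)$ and hence both give $(4/5)^k \le \delta$.
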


We can now discuss the neighborhood choice for the corresponding neighborhood Pareto bound.
Recall that, intuitively, neighborhood optimality is asking ``how well can our algorithm do on $p$ given that we also want our algorithm to do similarly well on a neighborhood of $p$''; and thus, the smaller we choose the neighborhood, the stronger the resulting theorem. We thus define the neighborhood of $p$ to consist of distributions that are similar or similarly nice to $p$ in 4 different ways:

\begin{definition}[Choice of neighborhood function $N_{n,\delta}$ in \Cref{thm:MoMopt}]
\label{def:neighborhood}
Define $N_{n,\delta}(p)$ to be the set of distributions $q \in \cP_1$ such that
\begin{enumerate}[leftmargin=3em,topsep=0.01em,itemsep=0.01em]
    \item $\eps_{n/3,\delta}(q) \le 100\eps_{n,\delta}(p)$
    \item $\log (1-\DH^2(p,q)) \ge \frac{1}{2n}\log 4\delta$
    \item $|\mu_q - \mu_p| \le \eps_{n,\delta}(p)$
    \item For all $x \in \Real$, $\frac{\d q}{\d p}(x) \le 2$.
\end{enumerate}
\end{definition}
As a basic sanity check, we show in \Cref{app:sanity-check} that a trivial, hardcoded estimator cannot be neighborhood optimal---this is mostly a consequence of Property 3 above.
See \Cref{app:sanity-check} for a formal statement and proof.
We then show:

\begin{theorem}
\label{thm:MoMopt}
Let $n$ be the number of samples and $\delta$ be the failure probability.
Assume that there is a sufficiently small constant which upper bounds both $\frac{\ld}{n}$ and $\delta$.

Consider the neighborhood function $N_{n,\delta}$ of \Cref{def:neighborhood}.
Recall the error function defined in \Cref{def:trimmed} as $\eps_{n,\delta}(p) = |\mu_p - \mu_{p^*_n}| + \sigma_{p^*_n}\sqrt{\frac{4.5 \ld}{n}}$.
Then, for some sufficiently large constant $\kappa$, the error function $\frac{1}{\kappa}\min(\eps_{n/3,\delta},\eps_{n,\delta})$ is a neighborhood Pareto bound with respect to $N_{n,\delta}$.

Combined with \Cref{prop:MoM-error} stating that the median-of-means estimator has error function $O(\eps_{n,\delta})$, this implies the median-of-means estimator is $(\kappa,3)$-neighborhood optimal with respect to $N_{n,\delta}$.
\end{theorem}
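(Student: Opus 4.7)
The plan is to prove the two claims in the theorem separately. The neighborhood optimality of median-of-means follows immediately from the neighborhood Pareto bound and Proposition \ref{prop:MoM-error}: the latter gives median-of-means error $|\mu_p - \mu_{p^*_n}| + 3\sigma_{p^*_n}\sqrt{4.5\ld/n} \le 3\eps_{n,\delta}(p) \le \kappa\,\eps_{n,\delta}(p)$ with probability $1-\delta$, matching Definition \ref{def:neighborhoodoptimality} with the sample-complexity loss factor $\tau = 3$. The core task is therefore to establish the Pareto bound for the error function $\frac{1}{\kappa}\min(\eps_{n/3,\delta},\eps_{n,\delta})$.

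To do so, I would invoke Proposition \ref{prop:indistinguishability}: for every $p \in \cP_1$, take the distribution $q$ from Theorem \ref{thm:main} as the witness, and verify (i) $q \in N_{n,\delta}(p)$ per Definition \ref{def:neighborhood}, and (ii) the mean-separation inequality $|\mu_p - \mu_q| \ge \frac{1}{\kappa}\min(\eps_{n/3,\delta}(p),\eps_{n,\delta}(p)) + \frac{1}{\kappa}\min(\eps_{n/3,\delta}(q),\eps_{n,\delta}(q))$. Of the four conditions in Definition \ref{def:neighborhood}, conditions (2) and (4) are immediate from the corresponding conclusions of Theorem \ref{thm:main}, and condition (3) is obtained by direct inspection of Definition \ref{def:q}: in Case 1, $|\mu_q - \mu_p| = \frac{1}{4}|\mu_{p^*_n} - \mu_p| \le \frac{1}{4}\eps_{n,\delta}(p)$; in Case 2, the mean shift equals $b\cdot\frac{1}{8}\sigma_{p^*_n}\sqrt{\ld/n}$ with $b \le 1$, which is at most $\frac{1}{8}\eps_{n,\delta}(p)$ since $\eps_{n,\delta}(p) \ge \sigma_{p^*_n}\sqrt{4.5\ld/n}$.

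The main obstacle is verifying condition (1), $\eps_{n/3,\delta}(q) \le 100\,\eps_{n,\delta}(p)$, which amounts to controlling both $|\mu_q - \mu_{q^*_{n/3}}|$ and $\sigma_{q^*_{n/3}}$ in terms of quantities associated to $p$. The key tool is $\d q/\d p \le 2$: for any set $A$, $q(A) \le 2\,p(A)$, so the $q$-mass outside the $p$-trimming interval $[\mu_p - r_p, \mu_p + r_p]$ is at most $2\cdot\frac{0.45\ld}{n} = \frac{0.9\ld}{n} < \frac{1.35\ld}{n}$, which implies the support of $q^*_{n/3}$ sits inside that interval shifted by at most $|\mu_p - \mu_q| \le \eps_{n,\delta}(p)$ (using condition (3)). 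Combining this containment with the density bound $\d q \le 2\,\d p$, one obtains $\sigma_{q^*_{n/3}}^2 \le \frac{2}{1-1.35\ld/n}\int_{I}(x-\mu_p)^2\,\d p$ for a slightly enlarged interval $I$, and then relates this to $\sigma_{p^*_n}^2 + O(\eps_{n,\delta}(p)^2)$ by a parallel-axis-style estimate. An analogous argument handles $|\mu_q - \mu_{q^*_{n/3}}|$. After absorbing the $\sqrt{3}$ factor introduced by the change $n \mapsto n/3$ inside the $\sqrt{4.5\ld/n}$ term, these estimates yield condition (1) with substantial slack; minor casework between the two branches of Definition \ref{def:q} may be needed, since Case 1 injects $p^*_n$ as an explicit mixture component while Case 2 reshapes the density of $p$.

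With condition (1) in hand, the mean-separation calculation is immediate: Theorem \ref{thm:main} gives $|\mu_p - \mu_q| \ge \frac{1}{32}\eps_{n,\delta}(p)$, while the required lower bound is at most $\frac{1}{\kappa}\eps_{n,\delta}(p) + \frac{1}{\kappa}\eps_{n/3,\delta}(q) \le \frac{101}{\kappa}\eps_{n,\delta}(p)$, using condition (1) to bound the $q$-term. Any $\kappa \ge 32\cdot 101$ then makes the required inequality hold, completing the proof via Proposition \ref{prop:indistinguishability}.
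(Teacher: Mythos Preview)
Your proposal is correct and follows essentially the same route as the paper: reduce via \Cref{prop:indistinguishability}, take the $q$ of \Cref{def:q}, verify the four conditions of \Cref{def:neighborhood} (with condition~(1) being the substantive one), and then close with the mean-separation inequality using $\eps_{n/3,\delta}(q)\le 100\,\eps_{n,\delta}(p)$. Your check of condition~(3) and the final $\kappa$-calculation match the paper exactly.

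One inaccuracy in your sketch of condition~(1) is worth flagging. The implication ``$q$-mass outside $[\mu_p-r_p,\mu_p+r_p]$ is $<\frac{1.35}{n}\ld$, hence the support of $q^*_{n/3}$ sits inside that interval (shifted by $|\mu_p-\mu_q|$)'' is not true in general: because the $q$-trimming is centered at $\mu_q\neq\mu_p$, the interval $[\mu_q-r_q,\mu_q+r_q]$ can genuinely extend outside $[\mu_p-r_p,\mu_p+r_p]$ on one side (the paper shows $r_q$ can be as large as $\tfrac{5}{4}r_p$). The paper handles this via an explicit second case (see \Cref{lem:q-one-interval-vs-true-interval} and \Cref{lem:sigma-q-star-bound}), whose key step is to observe that in this regime at least $\frac{0.225}{n}\ld$ of $p^*_n$'s mass lies at distance $\ge r_p/2$ from $\mu_p$, which lower-bounds $\Exp_{p^*_n}[X^2]$ and thereby absorbs the small overflow contribution. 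This is a bit more than the ``minor casework'' you anticipate, but your overall strategy---compare against the $p$-trimming interval using $\d q/\d p\le 2$ and a parallel-axis bound---is exactly the paper's.
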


The main component of the proof is our construction of $q$, and the accompanying analysis of \Cref{thm:main} showing that $q$ is well behaved in several senses. We specifically show \Cref{lem:MoMopt}, a slight extension of \Cref{thm:main}:

\begin{lemma}
\label{lem:MoMopt}
Let $n$ be the sample complexity and $\delta$ be the failure probability, and recall the definition of $\eps_{n,\delta}$ from \Cref{def:trimmed}.
Assume that there is a sufficiently small constant which upper bounds both $\frac{\ld}{n}$ and $\delta$.
Then for any distribution $p$, 
there exists a distribution $q \neq p$ such that the mean of $q$ is $\frac{1}{32}\eps_{n,\delta}(p)$ different from the mean of $p$,
and $\log (1-\DH^2(p,q))\geq \frac{1}{2n}\log  4\delta$, and $q\in  N_{n,\delta}(p)$. 

\end{lemma}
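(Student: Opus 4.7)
The plan is to adopt the distribution $q$ constructed in Theorem \ref{thm:main} (via Definition \ref{def:q}) and verify it meets the four conditions of $N_{n,\delta}(p)$. Theorem \ref{thm:main} immediately delivers three items: the mean-shift lower bound $|\mu_q - \mu_p| \ge \frac{1}{32}\eps_{n,\delta}(p)$ (the first claim of the lemma), the Hellinger bound $\log(1-\DH^2(p,q)) \ge \frac{1}{2n}\log 4\delta$ (matching both the second claim and condition (ii) of $N_{n,\delta}$), and $\frac{\d q}{\d p} \le 2$ (condition (iv) of $N_{n,\delta}$). What remains is to verify conditions (i) and (iii).

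Condition (iii), $|\mu_q - \mu_p| \le \eps_{n,\delta}(p)$, follows by direct inspection of Definition \ref{def:q}. In Case 1, $q = \tfrac{3}{4}p + \tfrac{1}{4}p^*_n$ yields $|\mu_q - \mu_p| = \tfrac{1}{4}|\mu_{p^*_n}-\mu_p| \le \tfrac{1}{4}\eps_{n,\delta}(p)$. In Case 2, the equation defining the parameter $a$ pins the first-moment shift between $p$ and each $q^\pm$ to $\tfrac{1}{8}\sigma_{p^*_n}\sqrt{\ld/n}$, and the downscaling by $b \in [\tfrac{1}{2}, 1]$ can only shrink this; the result is bounded by $\sigma_{p^*_n}\sqrt{4.5\ld/n} \le \eps_{n,\delta}(p)$.

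The heart of the proof lies in condition (i), $\eps_{n/3,\delta}(q) \le 100\,\eps_{n,\delta}(p)$. The strategy is to transfer control of $q$'s trimmed statistics back to $p$'s, using the density ratio bound $\frac{\d q}{\d p} \le 2$ together with $\Delta := |\mu_q - \mu_p| \le \eps_{n,\delta}(p)$. Writing $r_p, r_q$ for the trimming radii of $p^*_n$ and $q^*_{n/3}$, the density bound together with $|X-\mu_q|>r \Rightarrow |X-\mu_p|>r-\Delta$ gives $\Pr_q[|X-\mu_q|>r_p+\Delta] \le 2 \cdot \tfrac{0.45\ld}{n} < \tfrac{1.35\ld}{n}$, so $r_q \le r_p + \Delta$ and $q^*_{n/3}$ is confined to $\{|X-\mu_p| \le r_p+2\Delta\}$. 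Then I would bound $\sigma_{q^*_{n/3}}^2 \le \Exp_{q^*_{n/3}}[(X-\mu_q)^2]$ by pushing its defining integral over to $p$ via the factor-$2$ density bound, expanding $(X-\mu_q)^2 \le 2(X-\mu_p)^2 + 2\Delta^2$, and splitting the integration region into the support of $p^*_n$ (contributing $O(\sigma_{p^*_n}^2 + (\mu_p-\mu_{p^*_n})^2)$) plus a thin annulus of width $\le 2\Delta$ just outside. The trimmed mean shift $|\mu_q - \mu_{q^*_{n/3}}|$, expressible as $|\Exp_q[(X-\mu_q)\mathbf{1}_{|X-\mu_q|>r_q}]|/(1-\pi)$, is bounded analogously.

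The hardest step is controlling the annulus contribution, since for heavy-tailed $p$ the radius $r_p$ can substantially exceed $\sigma_{p^*_n}$, making the crude bound $(r_p+2\Delta)^2 \cdot (\text{annulus mass})$ too lossy. To overcome this, I would analyze Cases 1 and 2 of Definition \ref{def:q} separately, exploiting the fact that in both cases $q$'s tails are lighter than $p$'s in the relevant sense: in Case 1 we have $\frac{\d q}{\d p} = \tfrac{3}{4}$ outside the support of $p^*_n$ and $\frac{\d q}{\d p} \approx 1$ inside, so $r_q$ is essentially no larger than $r_p$ and the annulus effectively does not appear; in Case 2 the skew factor $ax$ is truncated to $[-1,1]$ and the density can drop to zero on one side, analogously lightening the relevant tail. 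Combined with Chebyshev on $p^*_n$ (which bounds the internal mass near radius $r_p$ by $O(\sigma_{p^*_n}^2/r_p^2)$) to handle the variance-expansion step cleanly, and tracking constants through all estimates, including the factor $\sqrt{13.5/4.5}=\sqrt{3}$ arising from the decreased sample count in $\eps_{n/3,\delta}$ versus $\eps_{n,\delta}$, we arrive at $\eps_{n/3,\delta}(q) \le 100\,\eps_{n,\delta}(p)$.
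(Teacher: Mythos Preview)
Your overall architecture is exactly the paper's: take the $q$ of Definition~\ref{def:q}, read off the mean-shift lower bound, the Hellinger bound, and $\frac{\d q}{\d p}\le 2$ from Theorem~\ref{thm:main}, verify condition~(iii) directly from the two cases, and then work for condition~(i) by bounding $|\mu_q-\mu_{q^*_{n/3}}|$ and $\sigma_{q^*_{n/3}}$ separately. Your handling of (ii), (iii), (iv) and the mean-shift lower bound is correct and matches the paper.

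The gap is in your treatment of the annulus for condition~(i). Your claim that ``in Case~1 \ldots\ $r_q$ is essentially no larger than $r_p$ and the annulus effectively does not appear'' does not follow from $\frac{\d q}{\d p}=\tfrac{3}{4}$ outside $[-r_p,r_p]$: the trimming interval for $q^*_{n/3}$ is centered at $\mu_q$, not $\mu_p$, so even if $r_q\le r_p$ one still can have $\mu_q+r_q>r_p$ (and $|\mu_q|$ can be as large as $r_p/4$). Your Chebyshev suggestion gives an \emph{upper} bound on tail mass of $p^*_n$, whereas what is needed to absorb an annulus term of size $\Theta(r_p^2\cdot\tfrac{1}{n}\ld)$ is a \emph{lower} bound on $\Exp_{p^*_n}[X^2]$ of the same order; Chebyshev does not supply this. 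The paper's resolution (Lemma~\ref{lem:sigma-q-star-bound}) is different and avoids case analysis entirely: since $q$ has at most $\tfrac{0.9}{n}\ld$ mass outside $[-r_p,r_p]$ but $q^*_{n/3}$ trims $\tfrac{1.35}{n}\ld$, whenever the annulus is nonempty at least $\tfrac{0.45}{n}\ld$ mass of $q$ must be trimmed from \emph{inside} $[-r_p,r_p]$; that mass lies at distance $\ge r_p/2$ from $\mu_p$, which forces $\Exp_{p^*_n}[X^2]\ge\Omega(r_p^2\cdot\tfrac{1}{n}\ld)$ and absorbs the annulus contribution. The same ``must trim from the inside, hence large second moment'' idea drives the second case of Lemma~\ref{lem:q-one-interval-vs-true-interval} for the mean-shift term. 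Supplying this step would complete your argument; the rest of your sketch is sound.
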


See \Cref{app:proof_lem_MoMopt} for the proof of \Cref{lem:MoMopt}. We will now use \Cref{lem:MoMopt} to prove \Cref{thm:MoMopt}.

\begin{proof}[of \Cref{thm:MoMopt}]
By \Cref{prop:indistinguishability}, it suffices to show that, for every distribution $p \in \cP_1$, there exists a distribution $q \in N_{n,\delta}(p)$ with $q\neq p$ such that $|\mu_p-\mu_q| \ge \frac{1}{\kappa}(\min(\eps_{n/3,\delta},\eps_{n,\delta})(p) + \min(\eps_{n/3,\delta},\eps_{n,\delta})(q))$ for some large constant $\kappa$, and no tester can distinguish $p$ and $q$ with probability $1-\delta$ using $n$ samples.

Given a distribution $p \in \cP_1$, consider the distribution $q \in N_{n,\delta}(p)$ guaranteed by \Cref{lem:MoMopt}.
Since $q$ satisfies $\log (1-\DH^2(p,q))\geq \frac{1}{2n}\log 4\delta$, by \Cref{fact:hellinger} we know that $p$ and $q$ are indistinguishable with probability $1-\delta$ using $n$ samples.

It remains to check that $|\mu_p-\mu_q| \ge \frac{1}{\kappa}(\min(\eps_{n/3,\delta},\eps_{n,\delta})(p) + \min(\eps_{n/3,\delta},\eps_{n,\delta})(q))$ for some sufficiently large constant $\kappa$.
By \Cref{lem:MoMopt}, we have
\begin{align*}
    |\mu_p-\mu_q| &\ge \Omega(\eps_{n,\delta}(p))
    \ge \Omega(\eps_{n,\delta}(p) + \eps_{n,\delta}(p))
    \ge \Omega(\eps_{n/3,\delta}(q) + \eps_{n,\delta}(p)) \quad \text{by \Cref{lem:MoMopt}}\\
    &\ge \Omega(\min(\eps_{n/3,\delta},\eps_{n,\delta})(p) + \min(\eps_{n/3,\delta},\eps_{n,\delta})(q))
\end{align*}
which completes the proof of \Cref{thm:MoMopt}.
\end{proof}

\section*{Acknowledgements}

We thank the anonymous reviewers for insightful comments and suggestions on this work.
Jasper C.H.~Lee is supported in part by the generous funding of a Croucher Fellowship for Postdoctoral Research, NSF award DMS-2023239, NSF Medium Award CCF-2107079 and NSF AiTF Award CCF-2006206.
Maoyuan Song is supported in part by NSF award CCF-1910411, NSF award CCF-2228814, and NSF award CCF-2127806.
Paul Valiant is supported by NSF award CCF-2127806.

\printbibliography

\appendix
\newpage

\section{Interpreting Neighborhood Optimality}
\label{app:neighborhoodoptimality}

In this section, we derive basic properties and intuitions about the definitions of neighborhood Pareto bounds and neighborhood optimality.

In particular, we show that neighborhood optimality is a notion that smoothly interpolates between instance optimality and admissibility, depending on what neighborhood function is used to instantiate the definition.
We also show a sufficient condition for proving that an error function $\eps$ is a neighborhood Pareto bound (\Cref{prop:indistinguishability}).
Lastly, we discuss and compare neighborhood optimality and the notion of local minimax optimality that has appeared in the literature.

\subsection{As an interpolation between instance optimality and admissibility}

Recall that \Cref{def:lb,def:neighborhoodoptimality} both require specifying the neighborhood function.
Our first observation is the \emph{monotonicity} in the definition of neighborhood Pareto bounds, in the sense of the following straightforward proposition which we state without proof.

\begin{proposition}
\label{prop:lbmonotonic}
Suppose the neighborhood functions $N_{n,\delta}$ and $N'_{n,\delta}$ are such that $N_{n,\delta}(p) \subseteq N'_{n,\delta}(p)$ for all $p$.
Then, if a given error function $\eps_{n,\delta}$ is a neighborhood Pareto bound with respect to $N_{n,\delta}$, then $\eps_{n,\delta}$ is also a neighborhood Pareto bound with respect to $N'_{n,\delta}$.
\end{proposition}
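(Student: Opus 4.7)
The plan is to argue by contrapositive: starting from the hypothesis that some estimator witnesses the failure of $\eps_{n,\delta}$ as a neighborhood Pareto bound with respect to the larger $N'_{n,\delta}$, I would exhibit the same estimator as a witness for the failure with respect to the smaller $N_{n,\delta}$. Concretely, unpacking \Cref{def:lb}, ``$\eps_{n,\delta}$ fails to be a neighborhood Pareto bound with respect to $N'_{n,\delta}$'' means there is some $p \in \cP_1$ and some estimator $\muhat$ that, on $n$ i.i.d.~samples, simultaneously (i) achieves $|\muhat - \mu_q| \le \eps_{n,\delta}(q)$ with probability $1-\delta$ for every $q \in N'_{n,\delta}(p)$, and (ii) achieves $|\muhat - \mu_p| < \eps_{n,\delta}(p)$ with probability $1-\delta$.

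The key step is to observe that the first bullet is a \emph{universally} quantified statement over the neighborhood, so shrinking the domain of quantification to a subset can only weaken the condition. Since $N_{n,\delta}(p) \subseteq N'_{n,\delta}(p)$ by hypothesis, bullet (i) instantly implies the analogous statement with $N'_{n,\delta}(p)$ replaced by $N_{n,\delta}(p)$. Bullet (ii) refers only to $p$ and $\eps_{n,\delta}(p)$, neither of which depend on the neighborhood function, so it transfers verbatim. Thus the same pair $(p,\muhat)$ witnesses the failure of $\eps_{n,\delta}$ as a neighborhood Pareto bound with respect to $N_{n,\delta}$, contradicting the hypothesis of the proposition.

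The main (and essentially only) obstacle is a sanity check that no clause of \Cref{def:lb} hides an existential quantifier over elements of the neighborhood that would reverse the direction of inclusion; inspection confirms both bullets are either universal over the neighborhood or wholly independent of it, so no such subtlety arises. This is why the authors state the proposition without proof: once the definitions are unfolded, the argument collapses to a one-line set-theoretic manipulation about the monotonicity of universal quantification under subset containment.
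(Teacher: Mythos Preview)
Your proposal is correct and matches the paper's intent: the paper explicitly states this proposition without proof, calling it ``straightforward,'' and your contrapositive argument via the monotonicity of the universal quantifier in the first bullet of \Cref{def:lb} is exactly the one-line reasoning the authors have in mind.
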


We can extend this monotonicity observation from neighborhood Pareto bounds to neighborhood optimal estimators.

\begin{proposition}
\label{prop:nboptmonotonic}
Suppose the neighborhood functions $N_{n,\delta}$ and $N'_{n,\delta}$ are such that $N_{n,\delta}(p) \subseteq N'_{n,\delta}(p)$ for all $p$.
Then, if $\muhat$ is a $\kappa$-neighborhood optimal estimator with respect to $N_{n,\delta}$, then $\muhat$ is also $\kappa$-neighborhood optimal with respect to $N'_{n,\delta}$.
\end{proposition}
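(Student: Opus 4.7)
The plan is to derive \Cref{prop:nboptmonotonic} as a direct corollary of the monotonicity statement for neighborhood Pareto bounds (\Cref{prop:lbmonotonic}), by simply re-using the same witness error function. Unpacking \Cref{def:neighborhoodoptimality}, saying that $\muhat$ is $(\kappa,\tau)$-neighborhood optimal with respect to $N_{n,\delta}$ amounts to the existence of an error function $\eps_{n,\delta}$ satisfying two conditions: (i) $\min(\eps_{n/\tau,\delta},\eps_{n,\delta})$ is a neighborhood Pareto bound with respect to $N_{n,\delta}$, and (ii) $\muhat$ has error at most $\kappa\cdot\eps_{n,\delta}(p)$ with probability $1-\delta$ on every $p\in\cP_1$. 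Only condition (i) mentions the neighborhood function at all, so everything hinges on transferring (i) from $N_{n,\delta}$ to $N'_{n,\delta}$.

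First, I would invoke \Cref{prop:lbmonotonic}: since $N_{n,\delta}(p)\subseteq N'_{n,\delta}(p)$ for every $p$, any error function that is a neighborhood Pareto bound with respect to $N_{n,\delta}$ is automatically a neighborhood Pareto bound with respect to $N'_{n,\delta}$. Applying this to $\min(\eps_{n/\tau,\delta},\eps_{n,\delta})$ immediately upgrades condition (i) from $N_{n,\delta}$ to $N'_{n,\delta}$. Second, condition (ii) is a statement purely about $\muhat$'s per-distribution performance and is unaffected by the change of neighborhood function, so it transfers verbatim with the same $\eps_{n,\delta}$. Together, the same witness $\eps_{n,\delta}$ certifies that $\muhat$ is $(\kappa,\tau)$-neighborhood optimal with respect to $N'_{n,\delta}$.

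There is essentially no technical obstacle here; the only thing to be careful about is the direction of the inclusion in \Cref{prop:lbmonotonic}. Intuitively enlarging the neighborhood makes the ``no estimator can beat $\eps$ on a whole neighborhood of $p$'' statement \emph{easier} to satisfy, because any estimator that would have violated it on the larger $N'_{n,\delta}(p)$ would a fortiori have violated it on the smaller $N_{n,\delta}(p)$; this is precisely why lower bounds (and hence neighborhood optimality) propagate from smaller neighborhoods to larger ones, and I would briefly note this to reassure the reader that the direction is correct before concluding.
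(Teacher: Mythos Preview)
Your proposal is correct and matches the paper's intended reasoning: the paper treats this as an immediate extension of \Cref{prop:lbmonotonic} and does not even write out a proof, and your argument---reusing the same witness $\eps_{n,\delta}$, applying \Cref{prop:lbmonotonic} to transfer the Pareto-bound condition, and noting the upper-bound condition is neighborhood-independent---is exactly the straightforward derivation the paper has in mind.
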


To understand how the set of neighborhood optimal estimators vary as we change the neighborhood function, we examine the two extreme examples as special cases, where $N_{n,\delta} \equiv \emptyset$ and $N_{n,\delta} \equiv \cP_1$.

Let us first consider the case when the neighborhoods are all empty.
In this case, \Cref{def:lb} simplifies to requiring a neighborhood Pareto bound $\eps_{n,\delta}$ to be such that, for every distribution $p \in \cP_1$, no estimator $\muhat$ (specialized to $p$) can have error strictly less than $\eps_{n,\delta}(p)$ with probability $1-\delta$ over $n$ samples from $p$.
It is straightforward to check that any estimator that is $\kappa$-neighborhood optimal with respect to empty neighborhoods is equivalent to being \emph{instance optimal} (\Cref{def:instanceoptimality} in \Cref{app:optdefn}) up to a $\kappa$ factor, which as explained before, is impossible to achieve.
The only possible neighborhood Pareto bound with respect to empty neighborhoods is $\eps_{n,\delta} \equiv 0$, since every distribution has a trivial estimator that outputs its mean hardcoded.

At the other extreme, consider the case where all the neighborhoods contain all the distributions in $\cP_1$.
Here, \Cref{def:lb} simplifies (after taking a contrapositive) to requiring a neighborhood Pareto bound $\eps$ to be such that, for every distribution $p \in \cP_1$, if an estimator has error at most $\eps_{n,\delta}(q)$ with probability $1-\delta$ for all distributions $q \ne p \in \cP_1$, then it must have error at least $\eps_{n,\delta}(p)$ with probability $1-\delta$ for distribution $p$.
It is again straightforward to check that, by definition, an estimator is $1$-neighborhood optimal with respect to the constant-$\cP_1$ neighborhood function if and only if the estimator is admissible in $\cP_1$ (\Cref{def:admissibility} in \Cref{app:optdefn}).
As explained before, admissibility is a somewhat weak notion on estimators, and includes trivial hardcoded estimators.

Combining these two extreme cases and the monotonicity propositions, we have shown that neighborhood optimality is a notion which interpolates between instance optimality and admissibility.
More technically, neighborhood optimality can be viewed as a homomorphism mapping the partial ordering of neighborhood functions (the ordering induced by set inclusion) to the partial ordering of sets of estimators (also ordered by set inclusion).

We remark that, while it may be tempting to view neighborhood Pareto bounds as \emph{lower bounds}, and neighborhood optimality as exhibiting an estimator whose error function matches a lower bound up to a constant factor, such an interpretation is actually not valid.
The reason is that, depending on the choice of the neighborhood function, given an estimator with error function $\eps^{\mathrm{upper}}_{n,\delta}$ and a neighborhood Pareto bound $\eps^{\mathrm{pareto}}_{n,\delta}$, it can be the case that on some distributions $p \in \cP_1$, we have $\eps^{\mathrm{pareto}}_{n,\delta}(p) > \eps^{\mathrm{upper}}_{n,\delta}(p)$; namely, these Pareto bounds should \emph{not} be viewed as lower bounds.
A simple example is the previous case where the neighborhood function is $\cP_1$, in which case it is straightforward to check that for any (hardcoded) \emph{constant} $\muhat \in \Real$, the error function $\eps^{\mathrm{pareto}}_{n,\delta}(p) = |\muhat - \mu_p|$ is a neighborhood Pareto bound. 
For any distribution $p$ whose mean is extremely far from the constant $\muhat$, the median-of-means algorithm will have accuracy better than $|\muhat-\mu_p|$.
Because of such examples, we take care \emph{not} to refer to our bounds as lower bounds, but instead call them Pareto bounds in this paper.

\subsection{Indistinguishability implies a neighborhood Pareto bound}
\label{app:indistinguishability}
We restate \Cref{prop:indistinguishability} from \Cref{sec:indistinguishability} here for clarity, and give the proof of it below.

\Indistinguishability*

\begin{proof}
Suppose the ``if'' condition in the proposition is true, yet, for the sake of contradiction, that $\eps_{n,\delta}$ is not a neighborhood Pareto bound.
Then, there exists a distribution $p \in \cP_1$ and an estimator $\muhat$ taking $n$ i.i.d.~samples such that
\begin{itemize}
    \item For all distributions $q \in N_{n,\delta}(p)$, with probability $1-\delta$ over the $n$ i.i.d.~samples from $q$, $|\muhat-\mu_q| \le \eps_{n,\delta}(q)$.
    \item With probability $1-\delta$ over the $n$ i.i.d.~samples from $p$, $|\muhat-\mu_{p}| < \eps_{n,\delta}(p)$.
\end{itemize}

By the proposition condition, there must exist some distribution $q \in N_{n,\delta}(p)$ such that $|\mu_{p} - \mu_{q}| \ge \eps_{n,\delta}(p)+\eps_{n,\delta}(q)$ and it is information-theoretically impossible to distinguish $p$ and $q$ with probability $1-\delta$ using $n$ samples.
However, we can construct the following distinguisher: compute a mean estimate $\muhat$, return $p$ if $\muhat$ is within $\eps_{n,\delta}(p)$ of $\mu_{p}$ and return $q$ otherwise.
By our assumption on $\muhat$, this distinguisher will succeed with probability at least $1-\delta$, thus contradicting the proposition statement.
\end{proof}

We remark that the proof of \Cref{prop:indistinguishability} actually establishes a stronger result, that the error function $\eps_{n,\delta}$ is a neighborhood Pareto bound for the singleton neighborhood $N^*_{n,\delta}(p) = \{q(p)\}$ where $q(p)$ is the $q$ constructed from $p$ according to \Cref{prop:indistinguishability}.
Recall that the monotonicity property of \Cref{prop:lbmonotonic} says that neighborhood Pareto bounds are stronger for smaller neighborhoods; thus this bound for singleton neighborhoods implies the corresponding bound for any larger neighboorhoods $N_{n,\delta}\supset N^*_{n,\delta}$. 

\subsection{Comparing with local minimax optimality}
\label{app:localminimax}

Here, we compare our definition of neighborhood optimality with the notion of local minimax optimality from prior literature.
At a high level, neighborhood optimality imposes admissibility within each local neighborhood, whereas local minimax optimality imposes minimax optimality within each local neighborhood.
We argue that local minimax is \emph{too} sensitive to the choice of neighborhood structure, although the two definitions are different in a rather subtle way and perhaps difficult to see at first glance.
We concretely illustrate their differences via 1) a proposition showing that for practical purposes, local minimax bounds are a weaker notion than neighborhood Pareto bounds and 2) a simple example choice of an ``inappropriate'' neighborhood structure, such that an intuitively absurd local minimax bound holds for this neighborhood structure, but the same bound fails to satisfy the definition of a neighborhood Pareto bound.
Together, these results show that neighborhood optimality is a stronger and more robust notion than local minimax optimality.

For ease of comparison, we first phrase the local minimax definition in the same form as our definition of neighborhood optimality.

\begin{definition}[Local minimax bounds with respect to $N_{n,\delta}$]
\label{def:localminimaxlb}
Let $n$ be the number of samples and $\delta$ be the failure probability.
Given a neighborhood function $N_{n,\delta} : \cP_1 \to 2^{\cP_1}$, we say that the error function $\eps_{n,\delta}(p) : \cP_1 \to \mathbb{R}_0^+$ is a local minimax bound for $\cP_1$ with respect to $N_{n,\delta}$ if for all distributions $p \in \cP_1$, there is \emph{no} estimator $\muhat$ such that for all distributions $q \in N_{n,\delta}(p) \cup \{p\}$, when given $n$ i.i.d.~samples from $q$, the estimator $\muhat$ achieves $|\muhat - \mu_q| < \eps_{n,\delta}(p)$ with probability $1-\delta$.
\end{definition}

\begin{definition}[$\kappa$-locally minimax estimators]
\label{def:localminimax}
For a parameter $\kappa > 1$, sample complexity $n$, failure probability $\delta$ and neighborhood function $N_{n,\delta}$, a mean estimator $\muhat$ is $\kappa$-locally minimax with respect to $N_{n,\delta}$ if there exists an error function $\eps_{n,\delta}(p)$ such that $\eps_{n,\delta}(p)$ is a local minimax bound, and $\muhat$ gives estimation error at most $\kappa\cdot \eps_{n,\delta}(p)$ with probability at least $1-\delta$ when taking $n$ i.i.d.~samples from any distribution $p \in \mathcal{P}$.
\end{definition}

The notions of local minimax bounds and locally minimax estimators also have the same monotonicity properties as neighborhood Pareto bounds and neighborhood optimal estimators, analogous to \Cref{prop:lbmonotonic,prop:nboptmonotonic}.
When the neighborhood function $N_{n,\delta}$ is constantly equal to the empty set, local minimax optimality again coincides with instance optimality.
Furthermore, local minimax bounds can be proved using indistinguishability arguments, via a reduction analogous to \Cref{prop:indistinguishability} up to minor changes in parameters.

However, one key difference between the two styles of definition arises when we consider proving neighborhood Pareto bounds or local minimax bounds using such indistinguishability arguments.
Recall from \Cref{sec:indistinguishability} that, by relying on \Cref{prop:indistinguishability}, the main technical result of this paper is to show that for every distribution $p$, there exists a ``neighbor'' $q(p)$ whose mean is far from $p$, with $q(p)$ satisfying suitable structural properties (so that $q(p) \in N_{n,\delta}(p)$ in our eventual choice of $N_{n,\delta}$ in \Cref{def:neighborhood}), such that $p$ and $q$ cannot be distinguished with probability $1-\delta$ using $n$ samples.
As explained in \Cref{sec:indistinguishability}, this proof strategy effectively proves a neighborhood Pareto bound over the singleton neighborhood function $N^*_{n,\delta}(p) = \{q(p)\}$.
Thus, it is meaningful to compare neighborhood Pareto bounds and local minimax bounds when the neighborhoods are singletons.
The following proposition shows that, up to a constant factor of 2 in the error, a neighborhood Pareto bound on singleton neighborhoods implies a potentially much larger local minimax bound.

\begin{proposition}{\bf (For singleton neighborhoods, local minimax bounds are weaker than neighborhood Pareto bounds)}
\label{prop:local_minimax_weak}
Let $n$ be the number of samples and $\delta$ be the failure probability.
Consider a neighborhood function $N^*_{n,\delta}$ such that for all $p \in \cP_1$, $N^*_{n,\delta}(p)$ is a singleton set containing a distribution $q(p) \neq p$.
Suppose $\eps_{n,\delta}$ is a neighborhood Pareto bound with respect to $N^*_{n,\delta}$, and that $\eps_{n,\delta}(p) > 0$ for all $p \in \cP_1$.
Then, the function $\frac{1}{2}(\eps_{n,\delta}(p)+\eps_{n,\delta}(q(p))$ is a local minimax bound with respect to $N^*_{n,\delta}$.
Note that the above function is lower bounded by $\Omega(\max(\eps_{n,\delta}(p),\eps_{n,\delta}(q(p)))$ and can be much larger than $\eps_{n,\delta}(p)$.
\end{proposition}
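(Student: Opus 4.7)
The plan is to proceed by contradiction, deducing from the Pareto hypothesis two structural consequences and then using them to turn any hypothetical local-minimax-beating estimator into a distinguisher. Write $\eps'(p) := \frac{1}{2}(\eps_{n,\delta}(p) + \eps_{n,\delta}(q(p)))$, and suppose for contradiction that $\eps'$ is not a local minimax bound: then there exist a distribution $p$ and an estimator $\muhat$ with $|\muhat - \mu_r| < \eps'(p)$ holding with probability $\ge 1-\delta$ over $n$ samples from $r$, for both $r = p$ and $r = q(p)$.

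The first fact I extract from the Pareto hypothesis is the mean-gap lower bound~(a): $|\mu_p - \mu_{q(p)}| \ge \eps_{n,\delta}(p) + \eps_{n,\delta}(q(p)) = 2\eps'(p)$. To prove it, I would consider the one-parameter family of sample-independent (constant) estimators $\muhat_\alpha = (1-\alpha)\mu_p + \alpha \mu_{q(p)}$ for $\alpha \in [0,1]$, whose errors under $p$ and $q(p)$ are respectively $\alpha |\mu_p - \mu_{q(p)}|$ and $(1-\alpha)|\mu_p - \mu_{q(p)}|$. Were $|\mu_p - \mu_{q(p)}|$ strictly less than $\eps_{n,\delta}(p)+\eps_{n,\delta}(q(p))$, I would pick $\alpha = 0$ when $|\mu_p - \mu_{q(p)}| \le \eps_{n,\delta}(q(p))$, and otherwise $\alpha = 1 - \eps_{n,\delta}(q(p))/|\mu_p - \mu_{q(p)}| \in (0,1)$; in either case both Pareto clauses---non-strict error $\le \eps_{n,\delta}(q(p))$ on $q(p)$ and strict error $< \eps_{n,\delta}(p)$ on $p$---hold simultaneously (with $\eps_{n,\delta}(p) > 0$ supplying the strictness in the $\alpha = 0$ edge case), violating the Pareto bound.

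The second fact~(b) is that $p$ and $q(p)$ must be information-theoretically indistinguishable using $n$ samples with probability $\ge 1-\delta$. Its proof is by contrapositive: given any test $T$ that succeeds with probability $\ge 1-\delta$ under each distribution, the hardcoded estimator $\muhat = \mu_p\,\1[T = p] + \mu_{q(p)}\,\1[T = q(p)]$ achieves error exactly $0$ with probability $\ge 1-\delta$ under each distribution, which satisfies both Pareto clauses (since $\eps_{n,\delta} > 0$), yielding a Pareto violation. With (a) and (b) in hand, I would close the loop as follows: assume WLOG $\mu_p < \mu_{q(p)}$ and set $m = (\mu_p + \mu_{q(p)})/2$. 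By (a), $m - \mu_p \ge \eps'(p)$ and $\mu_{q(p)} - m \ge \eps'(p)$, so under $p$-samples $\muhat < \mu_p + \eps'(p) \le m$ with probability $\ge 1-\delta$, and under $q(p)$-samples $\muhat > \mu_{q(p)} - \eps'(p) \ge m$ with probability $\ge 1-\delta$; hence the threshold test ``declare $p$ iff $\muhat < m$'' distinguishes $p$ from $q(p)$ with probability $\ge 1-\delta$ on each side, contradicting (b).

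I expect the main obstacle to be arranging fact~(a) cleanly, since it is where the asymmetry between the strict inequality on $p$ and the non-strict inequality on $q(p)$ in the Pareto definition interacts delicately with the interval constraint $\alpha \in [0,1]$; the hypothesis $\eps_{n,\delta} > 0$ is precisely what lets the strict inequality on $p$ be attained in the $\alpha = 0$ boundary case. Once (a) is established, both (b) and the final thresholding argument are short calculations.
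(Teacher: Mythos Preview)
Your proof is correct and follows essentially the same approach as the paper's: both derive the mean-gap inequality $|\mu_p-\mu_{q(p)}|\ge \eps_{n,\delta}(p)+\eps_{n,\delta}(q(p))$ from a hardcoded-estimator argument, then use the hypothetical local-minimax-beating estimator to build a distinguisher, and finally convert that distinguisher into a zero-error estimator contradicting the Pareto bound. The only difference is organizational---you factor the indistinguishability claim out as a standalone fact~(b) before invoking it, whereas the paper chains these steps inline---and your treatment of the strict/non-strict asymmetry in fact~(a) is slightly more careful than the paper's.
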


\begin{proof}
    Suppose for the sake of contradiction that $\frac{1}{2}(\eps_{n,\delta}(p)+\eps_{n,\delta}(q(p))$ is \emph{not} a local minimax bound with respect to $N^*_{n,\delta}$ but $\eps_{n,\delta}$ is a neighborhood Pareto bound.

    We observe that, since $\eps_{n,\delta}$ is a neighborhood Pareto bound, it must be the case that for every distribution $p \in \cP_1$, we have $|\mu_p - \mu_q| \ge \eps_{n,\delta}(p)+\eps_{n,\delta}(q(p))$.
    Otherwise, for any distribution $p$ not satisfying the above, there is a trivial hardcoded estimator that outputs a number $\muhat$ such that $|\muhat - \mu_q| < \eps_{n,\delta}(q(p))$ and $|\muhat-\mu_p| < \eps_{n,\delta}(p)$.

    Since $\frac{1}{2}(\eps_{n,\delta}(p)+\eps_{n,\delta}(q(p))$ is not a local minimax bound, there exists some distribution $p$ and some estimator $\muhat$ such that 1) with probability at least $1-\delta$ over $n$ samples from $p$, $|\muhat - \mu_p| < \frac{1}{2}(\eps_{n,\delta}(p)+\eps_{n,\delta}(q(p)))$ and 2) the same for $q$.
    However, we already know that $|\mu_p - \mu_q| \ge \eps_{n,\delta}(p)+\eps_{n,\delta}(q(p))$, which implies that we can use the mean estimator $\muhat$ to distinguish $p$ and $q$ with probability $1-\delta$ over $n$ samples.
    Using this distinguisher, we construct a new estimator $\muhat'$ which outputs $\mu_p$ if the distinguisher thinks the distribution is $p$, and $\mu_q$ otherwise.
    This new estimator $\muhat'$ has 0 error with probability $1-\delta$ over $n$ samples, which contradicts the assumption that $\eps_{n,\delta}$ is a neighborhood Pareto bound and that $\eps_{n,\delta}(p) > 0$.
\end{proof}

The notion of local minimax bounds is therefore (potentially much) weaker than the notion of a neighborhood Pareto bound we introduce in this work.
We now show that this \emph{can} actually happen, if we do not choose the neighborhood structure carefully.
We give a concrete example of a neighborhood structure in which an absurdly large local minimax bounds holds, but this bad bound is (rightfully) rejected by the definition of neighborhood Pareto bounds.

As a representative simple example, consider $p = \Normal(0,1)$ and define its neighborhood as a singleton set $N(p) = \{q = \Normal(\eta,1)\}$ where $\eta \ll 1$ is small enough that $p$ and $q$ are indistinguishable with $n$ samples.
Now define the neighborhood of $q$ to also be a singleton set $N(q) = \{q'\}$, where $q'$ is constructed by moving a tiny bit of mass of $q$ such that $\mu_{q'} = \eta+10^6$, but $q$ and $q'$ are indistinguishable.
Consider an absurd error function $\eps(p) = \eta/2$, $\eps(q) = 10^6/2$, which is far too large for $q$ since we expect $O(\sqrt{\log\frac{1}{\delta}/n}) \ll 1$ estimation error for $q$ (e.g.~by using a standard sub-Gaussian mean estimator).
Yet, $\eps$ is a local minimax bound (c.f.~\Cref{def:localminimax}) under the neighborhood function $N$, since, given two indistinguishable distributions $p$ and $q$, no estimator can get error less $|\mu_p-\mu_q|/2$.
On the other hand, we can also check that~\Cref{def:neighborhoodoptimality} \emph{rejects} this absurd $\eps$ error function from being a neighborhood Pareto bound.
To see this, consider the neighborhood of $p$, consisting only of $q$.
Consider the hardcoded estimator $\muhat$ always outputting 0: $\muhat$ violates the condition of neighborhood Pareto bounds since its error for $p$ is $|\muhat - \mu_p| = 0 < \eta/2$ and for $q$ is $|\muhat-\mu_q| = \eta \ll 10^6/2$.

This example, together with \Cref{prop:local_minimax_weak}, show that neighborhood optimality is a more robust notion than local minimax optimality when being applied to inappropriately chosen neighborhood structures.
While we believe our paper uses an ``appropriate'' neighborhood structure, we still emphasize the importance of introducing definitions that are properly \emph{resilient} to absurd instantiations.
For this reason, we have chosen to present our results in this paper as a neighborhood Pareto bound and optimality.

\section{Remaining proofs for \Cref{thm:main}}
\label{app:proof_thm_main}

In \Cref{def:q}, we claimed that there exists a parameter $a$ satisfying certain conditions in Case 2 of the construction. We formally show that this parameter exists in the following lemma.

\begin{lemma}
\label{lem:a_asym}
Let $n$ be the number of samples and and $\delta$ be the failure probability, and assume that $\frac{\ld}{n}$ is bounded by some sufficiently small absolute constant.
Let $p$ be any distribution such that $|\mu_p-\mu_{p^*_n}| \le \sigma_{p^*_n}\sqrt{c\frac{\ld}{n}}$, and we assume that $\mu_p = 0$ without loss of generality.
Then, the equation
\begin{equation*}
    \int_{-\infty}^{-\frac{1}{a}} (-x) \, \d p + a \cdot \int_{-\frac{1}{a}}^{\frac{1}{a}} x^2 \, \d p + \int_{\frac{1}{a}}^{\infty} x \, \d p  = \frac{1}{8} \sigma_{p^*_n}{\sqrt{\frac{\ld}{n}}}
\end{equation*}
always has solution $a \in \left(0, \frac{1}{\sigma_{p^*_n}}\sqrt{\frac{\ld}{n}}\right]$.
\end{lemma}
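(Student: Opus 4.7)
The plan is to recognize the left-hand side of the equation as the function
\[ f(a) \;:=\; \int \min(|x|, a x^2) \, \d p, \]
since on $\{|x| \le 1/a\}$ the integrand reduces to $ax^2$ and on $\{|x| > 1/a\}$ it reduces to $|x|$. First I would establish the basic analytic properties of $f$: finiteness of the mean of $p$ gives $\int |x| \, \d p < \infty$, and $0 \le \min(|x|, a x^2) \le |x|$ pointwise, so dominated convergence shows $f$ is continuous on $[0, \infty)$; for each fixed $x$, the integrand is non-decreasing in $a$, so $f$ is non-decreasing, and clearly $f(0) = 0$. Write $a_0 := \tfrac{1}{\sigma_{p^*_n}}\sqrt{\ld/n}$ and $T := \tfrac{1}{8}\sigma_{p^*_n}\sqrt{\ld/n}$ for the right endpoint of the target interval and the target value respectively. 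Since $f(0) = 0 < T$, the intermediate value theorem reduces the lemma to the single inequality $f(a_0) \ge T$, which automatically yields some $a \in (0, a_0]$ with $f(a) = T$.

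To bound $f(a_0)$ from below, let $r$ denote the trimming radius of $p^*_n$, so that (using $\mu_p = 0$) $p([-r, r]) = 1 - \frac{0.45 \ld}{n}$, $p(|x| > r) = \frac{0.45\ld}{n}$, and $\int_{|x| \le r} x^2 \, \d p = (1 - \tfrac{0.45\ld}{n})(\sigma_{p^*_n}^2 + \mu_{p^*_n}^2) \ge (1 - \tfrac{0.45\ld}{n})\sigma_{p^*_n}^2$. I would then split into two cases based on the relative size of $1/a_0$ and $r$. In the case $1/a_0 \ge r$, we have $\{|x|\le r\} \subseteq \{|x|\le 1/a_0\}$, so keeping only the $ax^2$ portion of the integrand gives
\[ f(a_0) \;\ge\; a_0 \int_{|x| \le 1/a_0} x^2 \, \d p \;\ge\; a_0 \int_{|x| \le r} x^2 \, \d p \;\ge\; \left(1 - \tfrac{0.45\ld}{n}\right) \sigma_{p^*_n} \sqrt{\ld/n}, \]
which exceeds $T$ once $\ld/n$ is a small enough absolute constant. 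In the case $1/a_0 < r$, the region $\{|x| > r\}$ is contained in $\{|x| > 1/a_0\}$, so keeping only the $|x|$ portion of the integrand gives
\[ f(a_0) \;\ge\; \int_{|x|>r} |x| \, \d p \;\ge\; r \cdot p(|x| > r) \;=\; \tfrac{0.45\ld}{n}\, r \;>\; 0.45\,\sigma_{p^*_n}\sqrt{\ld/n} \;\ge\; T, \]
where the strict inequality uses that this case means exactly $r > 1/a_0 = \sigma_{p^*_n}/\sqrt{\ld/n}$.

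The main obstacle I anticipate is the case $1/a_0 < r$, where $p^*_n$'s support $[-r,r]$ may be much wider than the central window $[-1/a_0, 1/a_0]$ on which the $ax^2$ piece of the integrand is active, so one cannot directly recover the full variance $\sigma_{p^*_n}^2$ by integrating $x^2$ over the central piece alone. The observation that rescues the bound is that this case forces the trimmed outlier mass $\frac{0.45\ld}{n}$ to live at radius at least $\sigma_{p^*_n}/\sqrt{\ld/n}$, so this outlier piece by itself already drives $f(a_0)$ above the threshold $T$.
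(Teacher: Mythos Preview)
Your proposal is correct and follows essentially the same approach as the paper: both recognize the left-hand side as a continuous, nondecreasing function of $a$ vanishing at $a=0$, reduce via the intermediate value theorem to showing the value at $a_0 = \tfrac{1}{\sigma_{p^*_n}}\sqrt{\ld/n}$ exceeds $T$, and then split into the same two cases according to whether $1/a_0 \gtrless r$, bounding by the $ax^2$ piece over $[-r,r]$ in one case and by the $|x|$ piece over $\{|x|>r\}$ in the other. Your presentation is slightly cleaner in two places (you invoke dominated convergence explicitly for continuity, and in the $1/a_0 \ge r$ case you use $\int_{[-r,r]} x^2\,\d p = (1-\tfrac{0.45\ld}{n})(\sigma_{p^*_n}^2+\mu_{p^*_n}^2)$ directly rather than routing through the mean-squared-error inequality), but these are cosmetic differences.
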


\begin{proof}
We first point out that, for the (potentially) non-unit measure $q^+$ defined by $\frac{\d q^+}{\d p}(x) = 1+ \min(1,\max(-1,a x))$, the left hand side of the equation equals the difference between the first moments of $q^+$ and $p$. 
Namely,
\begin{align*}
    \int_{-\infty}^\infty x (\d q^+ - \d p)
    = \int_{-\infty}^{\infty} x \min(1,\max(-1,a x)) \, \d p
    =     \int_{-\infty}^{-\frac{1}{a}} (-x) \, \d p + a \cdot \int_{-\frac{1}{a}}^{\frac{1}{a}} x^2 \, \d p + \int_{\frac{1}{a}}^{\infty} x \, \d p 
\end{align*}

Our goal is to find $a \in \left(0, \frac{1}{\sigma_{p^*_n}}\sqrt{\frac{\ld}{n}}\right]$ such that this first moment shift equals $\frac{1}{8} \sigma_{p^*_n}{\sqrt{\frac{\ld}{n}}}$.

Because $\min(1,\max(-1,a x))$ is increasing in $a$ and continuous in $a$, the first moment shift is also increasing and continuous in $a$.
When $a=0$ then $q^+=p$, and the shift clearly equals 0.
Thus it suffices for us to show that the first moment shift is at least $ \frac{1}{8} \sigma_{p^*_n}{\sqrt{\frac{\ld}{n}}}$ when $a = \frac{1}{\sigma_{p^*_n}}\sqrt{\frac{\ld}{n}}$.
The lemma then follows from the intermediate value theorem.

Consider the construction of $p^*_n$ in \Cref{def:trimmed}, and let $r$ be the trimming radius of $p$. 
We do a case analysis, either $\frac{1}{\sigma_{p^*_n}}\sqrt{\frac{\ld}{n}} \le 1/r$ or $\frac{1}{\sigma_{p^*_n}}\sqrt{\frac{\ld}{n}} \ge 1/r$.
\begin{description}
    \item[Case $\frac{1}{\sigma_{p^*_n}}\sqrt{\frac{\ld}{n}} \le 1/r$:] At $a = \frac{1}{\sigma_{p^*_n}}\sqrt{\frac{\ld}{n}}$, we have
    \begin{align*}
        a \cdot \int_{-\frac{1}{a}}^{\frac{1}{a}} x^2 \, \d p &\ge a \cdot \int_{-r}^{r} x^2 \, \d p \quad \text{since $a \le 1/r$}\\
        &\ge a \cdot \int_{-r}^r (x - \mu_{p_n^*})^2\,\d p \quad \text{since the mean squared error is minimized at the mean}\\
        &\geq\frac{1}{2} a\cdot \sigma^2_{p^*_n} \quad \text{since over $[-r,r]$ we have $\frac{\d p^*_n}{\d p} = \frac{1}{1-\frac{0.45}{n}\ld}\leq 2$ for suff. small $\frac{\ld}{n}$}\\
        &=\frac{1}{2} \frac{1}{\sigma_{p^*_n}}\sqrt{\frac{\ld}{n}} \cdot \sigma^2_{p^*_n}\quad \text{ by definition of $a$}\\
        &=\frac{1}{2} \sigma_{p^*_n} \sqrt{\frac{\ld}{n}}
    \end{align*}
    \item[Case $\frac{1}{\sigma_{p^*_n}}\sqrt{\frac{\ld}{n}} \ge 1/r$:] At $a = \frac{1}{\sigma_{p^*_n}}\sqrt{\frac{\ld}{n}}$, we have
    \begin{align*}
        \int_{-\infty}^{-\frac{1}{a}} (-x) \, \d p+\int_{\frac{1}{a}}^{\infty} x \, \d p &\ge \int_{-\infty}^{-r} (-x) \, \d p+\int_{r}^{\infty} x \, \d p \quad \text{since $a \ge \frac{1}{r}$}\\
        &\ge r\cdot\int_{\mathbb{R}\setminus[-r,r]} 1 \, \d p\\
        &= r \cdot \frac{0.45\ld}{n} \quad \text{by the definition of $p^*_n$ and $r$}\\
        &= 0.45r \cdot a \cdot \sigma_{p^*_n} \sqrt{\frac{\ld}{n}}\\
        &\ge 0.45\sigma_{p^*_n} \sqrt{\frac{\ld}{n}} \quad \text{since $a \ge \frac{1}{r}$}
    \end{align*}
\end{description}

Summarizing, in either case, at $a = \frac{1}{\sigma_{p^*_n}}\sqrt{\frac{\ld}{n}}$, we have that the first moment shift between $q^+$ and $p$ is
\[ \int_{-\infty}^{-\frac{1}{a}} (-x) \, \d p + a \cdot \int_{-\frac{1}{a}}^{\frac{1}{a}} x^2 \, \d p + \int_{\frac{1}{a}}^{\infty} x \, \d p \geq \min\left(\frac{1}{2},0.45\right) \sigma_{p^*_n} \sqrt{\frac{\ld}{n}} \ge \frac{1}{8} \sigma_{p^*_n} \sqrt{\frac{\ld}{n}} \]
yielding the lemma.
\end{proof}

\subsection{Bounding the squared Hellinger distance}
\label{app:proof_hellinger_distance}
We restate \Cref{lem:hellinger-distance-case-1,lem:hellinger-distance-case-2} from \Cref{sec:hellinger_distance} for clarity, then give the proofs of them below.

\HellingerDistanceCaseOne*

\begin{proof}[of \Cref{lem:hellinger-distance-case-1}]
Since Case 1 in the construction of $q$ in \Cref{def:q} linearly interpolates between $p$ and $p^*_n$, with interpolation constant $\lambda=\frac{3}{4}$, thus the Hellinger distance between $p$ and $q$ can be exactly computed as a function of 1) $\frac{0.45\ld}{n}$, which determines the masses of $p,q$ inside and outside of $p$'s trimming interval, along with 2) the ratio $\frac{\d q}{\d p}$ inside and outside of $p$'s trimming interval, which also depends only on $\frac{0.45}{n}$ and $\lambda$.
We then bound this (essentially) univariate expression.

Without loss of generality, assume that $\mu_p = 0$, and let the support of $p^*_n$ be $[-r, r]$.
Recall the construction of $q$ in Case 1 of \Cref{def:q}: $q = \lambda p + (1 - \lambda) p^*_n$.
At $x \notin [-r, r]$, we have $\frac{\d q}{\d p}(x) = \lambda$.
Otherwise, at $x \in [-r, r]$, we have $\frac{\d q}{\d p}(x) = \frac{\d}{\d p} \left(\lambda \, p + \frac{1-\lambda}{1-\frac{0.45\ld}{n}} p\right)(x)$ by the definition of $p^*_n$, which is in turn equal to $(1 - \lambda \cdot \frac{0.45 \ld}{n})/(1 - \frac{0.45 \ld}{n})$.

Given the above equalities, we can explicitly calculate $1-\DH^2(p,q)$ as follows.
\begin{align*}
    1 - \DH^2(p, q) &= \int \sqrt{\d p\, \d q} \\
            &= \int_{\Real \setminus [-r,r]} \sqrt{\d p\, \d q} + \int_{[-r,r]} \sqrt{\d p\, \d q} \\
            &= \int_{\Real \setminus [-r,r]} \sqrt{\lambda} \, \d p + \int_{[-r,r]} \sqrt{\frac{1 - \lambda \cdot \frac{0.45 \ld}{n}}{1 - \frac{0.45 \ld}{n}}} \, \d p \\
            &= \sqrt{\lambda} \cdot \frac{0.45 \ld}{n} + \sqrt{\frac{1 - \lambda \cdot \frac{0.45 \ld}{n}}{1 - \frac{0.45 \ld}{n}}} \cdot \left(1 - \frac{0.45 \ld}{n}\right) \quad \text{by the definition of $p^*_n$}\\
            &= \sqrt{\lambda} \cdot \frac{0.45 \ld}{n} + \sqrt{\left(1 - \lambda \cdot \frac{0.45 \ld}{n}\right)\left(1 - \frac{0.45 \ld}{n}\right)}
\end{align*}

We now show a technical lemma to lower bound the quantity in the last line.

\begin{lemma}
For any $\lambda \in [\frac{3}{4},1]$ and $\beta \in [0, 1]$, we have
\[ \sqrt{\lambda}\beta + \sqrt{(1-\lambda\beta)(1-\beta)} \ge e^{(\lambda-1)\beta} \]
\end{lemma}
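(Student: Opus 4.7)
The plan is to reinterpret the left-hand side as the Bhattacharyya coefficient between two Bernoulli distributions, and then bound its logarithm by Jensen's inequality and a direct estimate of the KL divergence.

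First, set $\mu = \lambda \beta$, so that $\sqrt{\lambda}\beta = \sqrt{\beta \mu}$ and the inequality becomes
\[ \sqrt{\beta \mu} + \sqrt{(1-\beta)(1-\mu)} \ge e^{\mu - \beta}, \]
where $\mu \in [\tfrac{3}{4}\beta, \beta]$. The left-hand side is exactly the Bhattacharyya coefficient $BC(p, q) = \sum_x \sqrt{p(x) q(x)} = \Exp_p[\sqrt{q/p}]$ for $p = \Bernoulli(\beta)$ and $q = \Bernoulli(\mu)$. Taking logarithms, it suffices to show $\log BC(p, q) \ge \mu - \beta$.

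Next, apply Jensen's inequality using the concavity of $\log$:
\[ \log BC(p, q) = \log \Exp_p[\sqrt{q/p}] \ge \Exp_p[\tfrac{1}{2}\log(q/p)] = -\tfrac{1}{2}\DKL(p \,\Vert\, q), \]
so it suffices to show $\DKL(p \,\Vert\, q) \le 2(\beta - \mu)$.

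For this last step, write $\DKL(p\,\Vert\,q) = \beta \log(\beta/\mu) + (1-\beta)\log\frac{1-\beta}{1-\mu}$. The second term is non-positive because $\mu \le \beta$ gives $(1-\beta)/(1-\mu) \le 1$. For the first term, use $\log x \le x - 1$ to get $\log(\beta/\mu) \le (\beta-\mu)/\mu$; combining with $\beta/\mu = 1/\lambda \le 4/3$ from the hypothesis $\lambda \ge \tfrac{3}{4}$ gives
\[ \beta \log(\beta/\mu) \le \frac{\beta}{\mu}(\beta - \mu) \le \tfrac{4}{3}(\beta - \mu). \]
Hence $\DKL(p\,\Vert\,q) \le \tfrac{4}{3}(\beta - \mu) \le 2(\beta - \mu)$, and feeding this back into the Jensen bound yields $\log BC(p, q) \ge -\tfrac{2}{3}(\beta - \mu) \ge \mu - \beta$ as desired, where the final inequality uses $\beta \ge \mu$.

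The main obstacle is spotting the reformulation: once the LHS is recognized as a Bhattacharyya coefficient and the RHS as $e^{\mu - \beta}$, the rest is a standard Jensen/KL chain. The hypothesis $\lambda \ge \tfrac{3}{4}$ enters precisely at the step $\beta/\mu \le 4/3$, giving a constant $\tfrac{4}{3}$ that is comfortably below the factor $2$ we ultimately need.
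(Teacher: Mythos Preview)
Your proof is correct and takes a genuinely different route from the paper. The paper argues by calculus: it computes the second derivative of the left-hand side in $\beta$, finds it negative, notes the right-hand side is convex in $\beta$, and concludes that the difference is concave so it suffices to check the endpoints $\beta \in \{0,1\}$; the endpoint $\beta=1$ reduces to $\sqrt{\lambda}\ge e^{\lambda-1}$ on $[\tfrac34,1]$. Your approach instead recognizes the left-hand side as the Bhattacharyya coefficient between $\Bernoulli(\beta)$ and $\Bernoulli(\lambda\beta)$, uses the general Jensen bound $\log BC(p,q)\ge -\tfrac12\DKL(p\,\Vert\,q)$, and then bounds the KL divergence directly via $\log x\le x-1$ together with $\beta/\mu = 1/\lambda \le 4/3$. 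The calculus proof is shorter and fully elementary; your information-theoretic proof is more conceptual, avoids differentiating the square root, and in fact yields the slightly stronger conclusion $\log\!\big(\sqrt{\lambda}\beta+\sqrt{(1-\lambda\beta)(1-\beta)}\big)\ge \tfrac{2}{3}(\lambda-1)\beta$. One minor point: the Jensen step as written needs $p>0$ on $\{0,1\}$, i.e.\ $\beta\in(0,1)$; the boundary cases $\beta\in\{0,1\}$ should be verified directly (they hold, as in the paper).
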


\begin{proof}
Take the second derivative of the left hand side with respect to $\beta$, giving $-\frac{(\lambda-1)^2}{4(1-\beta)^{3/2}(1-\lambda\beta)^{3/2}}$, which is negative.
On the other hand, the right hand side $e^{(\lambda-1)\beta}$ is an exponential in $\beta$ and hence convex in $\beta$.
Therefore, left hand side minus right hand side is concave, meaning that the difference is minimized at either $\beta = 0$ or $\beta = 1$.
At $\beta = 0$, both sides are equal to 1.
At $\beta = 1$, the left hand side is $\sqrt{\lambda}$ whereas the right hand side is $e^{\lambda-1}$.
The inequality $\sqrt{\lambda} \ge e^{\lambda-1}$ is true for any $\lambda \in [\frac{3}{4},1]$.
\end{proof}

Using this lemma, we have shown that
\begin{align*}
    \log (1 - \DH^2(p, q)) &= \log\left(\sqrt{\lambda} \cdot \frac{0.45 \ld}{n} + \sqrt{\left(1 - \lambda \cdot \frac{0.45 \ld}{n}\right)\left(1 - \frac{0.45 \ld}{n}\right)}\right)\\
    &\ge (\lambda-1) \frac{0.45 \ld}{n} = (1-\lambda)\frac{4.5\log \delta}{10 n}\\
    &\ge (1-\lambda)\frac{0.9 \log 4\delta}{n} \quad \text{since $\delta$ is sufficiently small}\\
    &\ge \frac{1}{2n}\log 4\delta \quad \text{by the definition of $\lambda = 3/4$ and that $\log 4\delta < 0$}
\end{align*}

\end{proof}

\HellingerDistanceCaseTwo*

\begin{proof}[of \Cref{lem:hellinger-distance-case-2}]
Recall that given a distribution $p$, in Case 2 of \Cref{def:q}, we construct $q$ by picking one of the two non-unit measures $q^+$ and $q^-$ which has mass $\frac{1}{b} \ge 1$.
Without loss of generality (via an appropriate reflection of $p$ with respect to $\mu_p$), let $q^+$ be this non-unit measure, then use $q = b q^+$.
We use $a$ to denote the corresponding solution to the equation in \Cref{def:q} (note that $a > 0$).

To relate $\DH^2(p, q)$ to $\DH^2(p, q^+)$, we will need to use the following lemma concerning the generalization of squared Hellinger distance between a distribution and a non-negative measure with mass bigger than 1.

\begin{lemma}
\label{lem:hellinger-monotonic}
Given a distribution $p$, and a non-negative measure $q$ with $\frac{1}{b}\geq 1$ probability mass, define the (extended) squared Hellinger distance  as $\DH^2(p,q)= \frac{1}{2} \int (\sqrt{\d p}-\sqrt{\d q})^2$.
Then, we have \[\DH^2(p,q)\geq \DH^2(p,bq)\]
\end{lemma}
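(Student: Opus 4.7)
The plan is to expand both sides of the inequality using the defining formula
\[ \DH^2(p, \mu) = \tfrac{1}{2}\!\int \d p + \tfrac{1}{2}\!\int \d\mu - \int \sqrt{\d p\, \d\mu}, \]
which is valid for any non-negative measure $\mu$ (not just probability measures) and follows from expanding the square. Writing $I := \int \sqrt{\d p\,\d q}$ and using $\int \d q = 1/b$, $\int \d(bq) = 1$, and $\sqrt{\d(bq)} = \sqrt{b}\sqrt{\d q}$, I would compute
\[ \DH^2(p,q) = \tfrac{1}{2} + \tfrac{1}{2b} - I, \qquad \DH^2(p, bq) = 1 - \sqrt{b}\, I. \]

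Subtracting and simplifying, the difference factors as
\[ \DH^2(p,q) - \DH^2(p, bq) = \frac{1-b}{2b} - (1-\sqrt{b})\, I = (1-\sqrt{b})\left[\frac{1+\sqrt{b}}{2b} - I\right], \]
using $1 - b = (1-\sqrt{b})(1+\sqrt{b})$. Since $b \in (0, 1]$, the prefactor $1 - \sqrt{b}$ is non-negative, so it suffices to show $I \le (1+\sqrt{b})/(2b)$.

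For this, I would apply Cauchy-Schwarz to $I = \int \sqrt{\d p}\sqrt{\d q}$ to get $I \le \sqrt{\int \d p}\sqrt{\int \d q} = 1/\sqrt{b}$, and then verify the clean inequality $1/\sqrt{b} \le (1+\sqrt{b})/(2b)$, which after multiplying by $2b$ reduces to $2\sqrt{b} \le 1 + \sqrt{b}$, i.e.\ $\sqrt{b} \le 1$. This holds by assumption, completing the proof. I do not anticipate any serious obstacle here: the only two ingredients are the bilinear expansion of Hellinger distance and Cauchy-Schwarz, and the $(1-\sqrt{b})$ factorization is what makes the argument collapse to a one-line inequality.
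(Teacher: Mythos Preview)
Your proof is correct. Both your argument and the paper's rely on the same key estimate, namely $I = \int \sqrt{\d p\,\d q} \le 1/\sqrt{b}$ (equivalently, that the Bhattacharyya coefficient between the probability measures $p$ and $bq$ is at most $1$), which is just Cauchy--Schwarz. The packaging differs, however: you subtract the two Hellinger values directly and factor out $(1-\sqrt{b})$, reducing to the one-line check $\sqrt{b}\le 1$. The paper instead treats $b' \mapsto \DH^2(p,b'q)$ as a function, computes its derivative $\tfrac{1}{2}(\tfrac{1}{b} - I/\sqrt{b'})$, and shows this is nonnegative for all $b' \in [b,1]$, giving monotonicity along the whole scaling path rather than just comparing the endpoints. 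Your route is a bit more direct for the stated inequality; the paper's route yields the slightly stronger statement that $\DH^2(p,b'q)$ is increasing on $[b,1]$, though only the endpoint comparison is used downstream.
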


\begin{proof}
For any $b' \ge 0$, we have
\begin{align*}
    \frac{1}{2} \DH^2(p,b'q) &= \frac{1}{2} \int (\sqrt{\d p}-\sqrt{b' \d q})^2= \frac{1}{2} \left(\int 1 \, \d p - 2\sqrt{b'}\int \sqrt{\d p \, \d q} + b'\int 1 \, \d q\right)\\
    &= \frac{1}{2} \left(1 - 2\sqrt{b'}\int \sqrt{\d p \, \d q} + \frac{b'}{b}\right)
\end{align*}

The derivative in $b'$ is therefore
\[ \frac{1}{2}\left(\frac{1}{b} - \frac{\int \sqrt{\d p \, \d q}}{\sqrt{b'}} \right)\]

The derivative is greater than 0 if and only if

\[ \sqrt{\frac{b'}{b}} \ge \int \sqrt{\d p \, \d (bq)}\]

The right hand side is the Bhattacharya coefficient between two distributions, $p$ and $bq$, and hence is upper bounded by 1 as a standard fact.
The left hand side on the other hand is at least 1 for all $b' \in [b,1]$.
Therefore, $\DH^2(p, b'q)$ is an increasing function in $b'$ for the range $b' \in [b,1]$, meaning that $\DH^2(p,q) \ge \DH^2(p,bq)$, as desired.
\end{proof}

We can now upper bound the squared Hellinger distance as follows.
\allowdisplaybreaks
\begin{align*}
    \DH^2(p, q) &= \DH^2(p, bq^+)\\
        &\leq \DH^2(p,q^+) \quad \text{by \Cref{lem:hellinger-monotonic}, since $\frac{1}{b} \ge 1$}\\ 
        &=\frac{1}{2} \int_{-\infty}^\infty (\sqrt{\d p} - \sqrt{\d q^+})^2\\
        &=\frac{1}{2} \int_{-\infty}^\infty \left(1-\sqrt{1+\min(1,\max(-1, a x))}\right)^2 \, \d p \quad \text{by definition of $q^+$}\\
        &\leq \frac{1}{2} \int_{-\infty}^{\infty} \min (1, (ax)^2) \, \d p\quad\text{since the inequality holds pointwise}\\
        &= \frac{1}{2} \left(\int_{-\infty}^{-\frac{1}{a}} 1 \, \d p + a^2 \int_{-\frac{1}{a}}^{\frac{1}{a}} x^2 \, \d p + \int_{\frac{1}{a}}^{\infty} 1 \, \d p \right)\\
        &= \frac{a}{2} \cdot \left(\int_{-\infty}^{-\frac{1}{a}} \frac{1}{a} \, \d p + a \int_{-\frac{1}{a}}^{\frac{1}{a}} x^2 \, \d p + \int_{\frac{1}{a}}^{\infty} \frac{1}{a} \, \d p \right)\\
        &\leq \frac{a}{2} \cdot \left(\int_{-\infty}^{-\frac{1}{a}} (-x) \, \d p + a \int_{-\frac{1}{a}}^{\frac{1}{a}} x^2 \, \d p + \int_{\frac{1}{a}}^{\infty} x \, \d p \right)\\
        &= \frac{a}{2} \cdot \frac{1}{8} \cdot \sigma_{p^*_n}\sqrt{\frac{\ld}{n}} \quad \text{since $a$ satisfies the equation of \Cref{def:q}}\\
        &\le \sqrt{\frac{\ld}{n}} \cdot \frac{1}{\sigma_{p^*_n}} \cdot \frac{1}{16} \cdot \sigma_{p^*_n}\sqrt{\frac{\ld}{n}} \quad \text{since $a$ is upper bounded by \Cref{def:q}}\\
        &= \frac{\ld}{16n} \le  \frac{\log\frac{1}{4\delta}}{4n} \quad \text{since $\delta$ is sufficiently small}
\end{align*}

Observe that for sufficiently small $z > 0$, we have $\log(1-z) \ge -2z$.
Since $\delta$ and $\frac{\ld}{n}$ are assumed to be sufficiently small, we have
\begin{align*}
    \log (1-\DH^2(p,q)) &\ge - \frac{\log\frac{1}{4\delta}}{2n}
    = \frac{1}{2n} \log 4\delta
\end{align*}
\end{proof}

\section{Error analysis for median-of-means}
\label{sec:MoM}
In this section, we present the matching upper bound result of the performance of the median-of-means estimator.
We restate the estimator and the proposition for the sake of clarity.

\begin{algorithm}
\caption{Standard Median-of-Means Estimator}\label{alg:MoMAlgo}
\vspace*{.5em}
Inputs: $n$ independent samples $\{x_i\}$ from an unknown distribution $p$; and confidence parameter $\delta$
\begin{enumerate}
    \item Divide the samples into $4.5 \ld$ groups with equal size.
    \item Compute the mean of each group.
    \item Return the median of these $4.5 \ld$ means.
\end{enumerate}
\end{algorithm}

\MoMerror*

To prove this proposition, we use the following lemma:
\begin{lemma}
\label{lem:MoM-singlegroup}
Defining $p^*_n$ to be the $\frac{0.45}{n} \ld$-trimmed version of $p$, then the empirical mean of $n' = \frac{n}{4.5 \ld}$ samples from $p$ is within $3 \frac{\sigma_{p^*_n}}{\sqrt{n'}}$ of $\mu_{p^*_n}$, except with probability at most $\frac{1}{5}$.
\end{lemma}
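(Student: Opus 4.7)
The plan is a short coupling-and-Chebyshev argument. I would let $r$ be the trimming radius so that $p^*_n$ equals $p$ conditioned on $[\mu_p-r,\mu_p+r]$, and define $E$ to be the event that all $n'$ samples from $p$ land in $[\mu_p-r,\mu_p+r]$. By the definition of $p^*_n$, a single sample from $p$ falls outside the trimming interval with probability exactly $\frac{0.45\ld}{n}$, so a union bound over the $n'=\frac{n}{4.5\ld}$ samples gives $\Pr(E^c) \le n'\cdot\frac{0.45\ld}{n} = \frac{1}{10}$.

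Next, I would use the fact that, conditional on $E$, the $n'$ samples are i.i.d.\ from $p^*_n$, which has a finite variance $\sigma_{p^*_n}^2$ since it is supported on a bounded interval. Chebyshev's inequality applied to the empirical mean $\bar X$ of these conditioned samples then yields $\Pr(|\bar X - \mu_{p^*_n}| > 3\sigma_{p^*_n}/\sqrt{n'} \mid E) \le \frac{1}{9}$.

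Combining these, I would write
\[
  \Pr\!\left(|\bar X - \mu_{p^*_n}| > \tfrac{3\sigma_{p^*_n}}{\sqrt{n'}}\right) \le \tfrac{1}{9}\Pr(E) + \Pr(E^c) = \tfrac{1}{9} + \tfrac{8}{9}\Pr(E^c) \le \tfrac{1}{9} + \tfrac{8}{9}\cdot\tfrac{1}{10} = \tfrac{1}{5}.
\]
The subtle point---and essentially the only non-routine step---is that a naive union bound on the two failure probabilities would yield $\frac{1}{9}+\frac{1}{10} = \frac{19}{90} > \frac{1}{5}$, so to land exactly at the stated $\frac{1}{5}$ one must avoid double-counting by weighting the conditional Chebyshev bound by $\Pr(E)=1-\Pr(E^c)$ rather than by $1$. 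Beyond this piece of bookkeeping, the argument is entirely routine: one just invokes the defining trimming probability of $p^*_n$ and applies Chebyshev to a bounded-support distribution. The main ``obstacle'' is simply noticing that this tighter decomposition is what makes the constants in \Cref{prop:MoM-error} line up.
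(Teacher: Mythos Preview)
Your proposal is correct and essentially identical to the paper's proof: both use the union bound to get $\Pr(E^c)\le\tfrac{1}{10}$, apply Chebyshev conditionally on $E$ to get $\tfrac{1}{9}$, and combine via the law of total probability to reach exactly $\tfrac{1}{5}$. The only cosmetic difference is that the paper writes the final sum as $\tfrac{1}{10}+\tfrac{9}{10}\cdot\tfrac{1}{9}$ whereas you write the algebraically equivalent $\tfrac{1}{9}+\tfrac{8}{9}\cdot\tfrac{1}{10}$.
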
 
\begin{proof}
Recall $p^*_n$ is $p$ but with $\frac{0.45}{n}\ld$ probability mass trimmed (and then scaled up to have total mass 1). Thus the probability that any of the $n'=\frac{n}{4.5 \ld}$ samples from $p$ are not in the support of $p^*_n$ is at most $\frac{1}{10}$ by the union bound.

Conditioned on the event stated above not happening, we can view the sampling process as drawing $n'$ samples from $p^*_n$, with mean $\mu_{p^*_n}$ and standard deviation $\sigma_{p^*_n}$. 
The standard deviation of the \emph{mean} of these $n'$ samples is thus $\frac{\sigma_{p^*_n}}{\sqrt{n'}}$.
By the Chebyshev inequality, the probability of the sample mean being more than $3$ times its standard deviation from the true mean is at most $\frac{1}{9}$.
Thus the empirical mean is within $3 \frac{\sigma_{p^*_n}}{\sqrt{n'}}$ of $\mu_{p^*_n}$, except with probability at most $\frac{1}{9}$.

Combining these two case, where the event happens and where it does not, the overall probability of the empirical mean of $n'$ samples being more than $3 \frac{\sigma_{p^*_n}}{\sqrt{n}}$ away from the true mean $\mu_{p^*_n}$ is at most $\frac{1}{10} + \frac{9}{10} \cdot \frac{1}{9} = \frac{1}{5}$.
\end{proof}

\begin{proof}[of \Cref{prop:MoM-error}]
Substituting in $n' = \frac{n}{4.5\ld}$ to \Cref{lem:MoM-singlegroup} for each group of the estimator, and with the fact that the mean of $p$ is $\mu_p$, we arrive at the conclusion that median-of-means will have error $|\mu_p - \mu_{p^*_n}| + 3 \sigma_{p^*_n}\sqrt{\frac{4.5\ld}{n}}$ except when at least half of the $4.5 \ld$ groups have error $>3 \sigma_{p^*_n}\sqrt{\frac{4.5\ld}{n}}$.
Since the probability of each mean having a big error is at most $\frac{1}{5}$ by \Cref{lem:MoM-singlegroup}, the overall failure probability is thus at most the probability that $4.5\ld$ coins each of bias $\frac{1}{5}$ will yield majority heads. 
Letting $k=4.5\ld$, we bound this probability with a Chernoff bound, which for our choice of $t\geq 0$, yields $(\frac{4}{5}e^0+\frac{1}{5}e^t)^k e^{-\frac{tk}{2}}$.
The upper bound is minimized at $t=\log 4$, attaining its minimum of $(\frac{4}{5})^k$.
Since $k=4.5\ld$, this probability is less than $\delta$.
\end{proof}

\section{Definition of instance optimality and admissibility}
\label{app:optdefn}

This paper needs a suitable notion of optimality beyond the worst case.
One natural definition to consider is the notion of \emph{instance optimality} (see \Cref{def:instanceoptimality} in \Cref{app:optdefn}) from the computer science literature~\cite{fagin2001optimal}, which intuitively states that ``our algorithm performs at least as well on any instance $p$ as \emph{any} algorithm customized to $p$''.
However, it is immediate that \emph{no} algorithm $A$ can satisfy such a definition in our setting---for every distribution $p$, there is a trivial estimator that is hardcoded to output the mean $\mu_p$ without looking at any data; and this hardcoded estimator beats any other estimator $A$.
On the other hand, the statistics literature commonly uses a different natural notion called \emph{admissibility} (see \Cref{def:admissibility} in \Cref{app:optdefn}, also analogous to the economics notion of \emph{Pareto efficiency}), which states that ``no algorithm can perform at least as well as our algorithm, and strictly outperforms our algorithm on some instance.''
While instance optimality is impossible to satisfy, admissibility has the dual problem of being somewhat too weak and too easy to satisfy: a trivial estimator which outputs a hardcoded mean estimate---ignoring any samples---is admissible.

In this short appendix, we give the formal definitions of instance optimality and admissibility for mean estimation over $\Real$.

Recall the notation $\cP_1$ for the set of distributions over $\Real$ with a finite mean.

\begin{definition}[$\kappa$-Instance Optimality in Mean Estimation]
\label{def:instanceoptimality}
For a parameter $\kappa > 1$, sample complexity $n$ and failure probability $\delta$, a mean estimator $\muhat$ whose error function is $\eps_{n,\delta}$ is $\kappa$-instance optimal if, for any distribution $p \in \cP_1$, every estimator $\muhat'$ has error at least $\frac{1}{\kappa}\eps_{n,\delta}(p)$ with probability $1-\delta$ over $n$ i.i.d.~samples from $p$.
\end{definition}

As we remarked in the introduction, there is no instance optimal mean estimator for any $\kappa > 0$, since for any estimator $\muhat$, we can find a distribution $p$ on which it has some nonzero error, and thus $\muhat$ cannot be instance optimal, because it performs infinitely worse on $p$ in comparison with the trivial ``hardcoded'' estimator that always outputs $\mu_p$ without looking at any samples.

\begin{definition}[Admissibility in Mean Estimation]
\label{def:admissibility}
For sample complexity $n$ and failure probability $\delta$, a mean estimator $\muhat$ whose error as a function of the distribution $p$ is $\eps_{n,\delta}(p)$, is called ``admissible'' if there is \emph{no} estimator $\muhat'$ with error function $\eps'_{n,\delta}$ such that
\begin{itemize}
    \item For every distribution $p \in \cP_1$, $\eps'_{n,\delta}(p) \le \eps_{n,\delta}(p)$
    \item There exists a distribution $p^* \in \cP_1$ such that $\eps'_{n,\delta}(p^*) < \eps_{n,\delta}(p^*)$
\end{itemize}
\end{definition}

\section{Hardcoded estimators are not neighborhood optimal}
\label{app:sanity-check}

We perform the basic ``sanity check'' for the neighborhood structure of \Cref{def:neighborhood}, and formally show that under this neighborhood definition, no trivial hardcoded estimator is $\kappa$-neighborhood optimal for any $\kappa > 1$.

\begin{proposition}
\label{prop:nohardcode}
Consider a mean estimator $\muhat_{\beta}$ which ignores any of its inputs and always outputs the value $\beta$ for some $\beta \in \Real$.
For any parameter $\kappa > 1$, $\muhat_{\beta}$ cannot be $\kappa$-neighborhood optimal with respect to $N_{n,\delta}$ defined in \Cref{def:neighborhood}.
\end{proposition}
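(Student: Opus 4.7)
The plan is to derive a contradiction. Suppose, for some fixed $\kappa > 1$ and $\tau > 1$, that $\muhat_\beta$ is $(\kappa,\tau)$-neighborhood optimal with corresponding error function $f_{n,\delta}$. Since $\muhat_\beta$ always outputs $\beta$, its \emph{deterministic} error on any distribution $p$ is exactly $|\beta - \mu_p|$. The upper-bound half of \Cref{def:neighborhoodoptimality} then forces $f_{n,\delta}(p) \ge |\beta - \mu_p|/\kappa$, and applying the same argument at sample size $n/\tau$ (the hardcoded estimator is independent of $n$) gives $f_{n/\tau,\delta}(p) \ge |\beta - \mu_p|/\kappa$. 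Consequently $\min(f_{n/\tau,\delta}(p), f_{n,\delta}(p)) \ge |\beta - \mu_p|/\kappa$ for every $p \in \cP_1$.

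Next I select a concrete distribution $p$ at which the supposed Pareto bound fails. Take the two-point distribution $p = \frac{1}{2}(\delta_{\mu_p - 1} + \delta_{\mu_p + 1})$, with $\mu_p := \beta + M$ for a parameter $M > 0$ to be chosen. For sufficiently small $\ld/n$, no trimming occurs, so $p^*_n = p$, giving $\mu_{p^*_n} = \mu_p$, $\sigma_{p^*_n} = 1$, and hence $\eps_{n,\delta}(p) = \sqrt{4.5\,\ld/n}$ in the sense of \Cref{def:trimmed} (which is the specific error function used to define the neighborhood in \Cref{def:neighborhood}). I fix $M := 2(\kappa+1)\,\eps_{n,\delta}(p)$, so $|\beta - \mu_p| = M$ is large relative to $\eps_{n,\delta}(p)$.

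I then exhibit the rival hardcoded estimator $\muhat'_{\mu_p}$ (which always outputs $\mu_p$) and check that it simultaneously satisfies both conditions in \Cref{def:lb} that are supposed to be mutually exclusive, relative to the error function $\min(f_{n/\tau,\delta}, f_{n,\delta})$. On $p$ itself, its error is $0$, strictly less than $\min(f_{n/\tau,\delta}(p), f_{n,\delta}(p)) \ge M/\kappa > 0$. For any $q \in N_{n,\delta}(p)$, Property 3 of \Cref{def:neighborhood} gives $|\mu_p - \mu_q| \le \eps_{n,\delta}(p)$, so the error of $\muhat'_{\mu_p}$ on $q$ is at most $\eps_{n,\delta}(p)$; meanwhile the triangle inequality yields $|\beta - \mu_q| \ge M - \eps_{n,\delta}(p) = (2\kappa+1)\,\eps_{n,\delta}(p)$, so $\min(f_{n/\tau,\delta}(q), f_{n,\delta}(q)) \ge (2\kappa+1)\,\eps_{n,\delta}(p)/\kappa > \eps_{n,\delta}(p)$, comfortably above the rival estimator's error. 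This directly contradicts the assertion that $\min(f_{n/\tau,\delta}, f_{n,\delta})$ is a neighborhood Pareto bound, completing the argument.

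The main subtlety is tuning the separation $M$ so that the triangle-inequality loss still leaves enough slack for the extra $1/\kappa$ factor. The two-point construction pins $\eps_{n,\delta}(p)$ to order $\sqrt{\ld/n}$ while letting me choose $M$ freely, so a separation of order $\kappa\sqrt{\ld/n}$ between $\mu_p$ and $\beta$ suffices; this is the one quantitative check in the proof. Beyond that, the argument rests entirely on the deterministic nature of hardcoded estimators together with Property 3 of \Cref{def:neighborhood}, matching the authors' hint that the result is ``mostly a consequence of Property 3''.
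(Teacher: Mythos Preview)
Your proof is correct and follows essentially the same strategy as the paper's: assume neighborhood optimality, use the upper-bound clause of \Cref{def:neighborhoodoptimality} to force the Pareto-bound function to be at least $|\beta-\mu_p|/\kappa$, pick a distribution $p$ whose mean is order-$\kappa\,\eps_{n,\delta}(p)$ away from $\beta$, and then exhibit the rival hardcoded estimator $\muhat_{\mu_p}$ to violate the Pareto condition via Property~3 of \Cref{def:neighborhood}. The paper does the same, only it invokes translation invariance of $\eps_{n,\delta}$ to place $\mu_p$ at $\beta+(1+\kappa^2)\eps_{n,\delta}(p)$ rather than fixing a specific distribution shape, and it works with the slightly different separation constant $1+\kappa^2$ instead of your $2(\kappa+1)$.

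One small technical quibble: \Cref{def:trimmed} is phrased for continuous distributions, so your two-point atom at $\mu_p\pm 1$ does not literally admit an exact $\frac{0.45}{n}\ld$-trimming. This is cosmetic---replacing the two-point law by, say, a unit-variance Gaussian or the uniform law on $[\mu_p-1,\mu_p+1]$ (or simply using translation invariance as the paper does) fixes it without changing any of the inequalities. Your handling of the $\min(f_{n/\tau,\delta},f_{n,\delta})$ term via ``the hardcoded estimator is independent of $n$'' mirrors the paper's own informal treatment of the same point.
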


\begin{proof}
For the sake of contradiction, suppose $\muhat_{\beta}$ is $\kappa$-neighborhood optimal.
The error function $\eps_{\beta}$ for $\muhat_\beta$ is simply $\eps_\beta(p) = |\mu_p - \beta|$.
Since $\muhat_\beta$ is $\kappa$-neighborhood optimal, there must exist some error function $\eps^\mathrm{pareto} \ge \frac{1}{\kappa}\eps_\beta$ such that $\eps^\mathrm{pareto}$ is a neighborhood Pareto bound with respect to $N_{n,\delta}$.
We will reach a contradiction by showing that $\eps^\mathrm{pareto}$ cannot be a neighborhood Pareto bound.

Pick an arbitrary distribution $p$ whose mean $\mu_p$ is $\beta + (1+\kappa^2)\cdot\eps_{n,\delta}(p)$.
This is always possible since $\eps_{n,\delta}(p)$ is translation-invariant in $p$.
By Property 3 of \Cref{def:neighborhood} and the reverse triangle inequality, any $q \in N_{n,\delta}(p)$ has mean $\mu_q$ such that $\eps_\beta(q) = |\mu_q - \beta| \ge \kappa^2 \eps_{n,\delta}(p)$.
Since $\eps^\mathrm{pareto}$ is a neighborhood Pareto bound, there must be no estimator $\tilde{\mu}$ such that for every distribution $q \in N_{n,\delta}(p) \cup \{p\}$, $|\tilde{\mu} - \mu_q| < \kappa \eps_{n,\delta}(p)$ with probability $1-\delta$ over $n$ samples from $q$;
otherwise, we would have $|\tilde{\mu} - \mu_q| < \kappa \eps_{n,\delta}(p) \le \frac{1}{\kappa}|\mu_q - \beta| = \frac{1}{\kappa}\eps_\beta(q) \le \eps^\mathrm{pareto}(q)$, which contradicts $\eps^\mathrm{pareto}$ being a neighborhood Pareto bound.
However, we can simply pick $\tilde{\mu} = \muhat_{\mu_p}$, the trivial estimator that outputs $\mu_p$ always.
Again by Property 3 of \Cref{def:neighborhood}, we have for every $q \in N_{n,\delta}(p) \cup \{p\}$ that $|\mu_q - \tilde{\mu}| = |\mu_q - \mu_p| \le \eps_{n,\delta}(p) < \kappa \eps_{n,\delta}(p)$ for $\kappa > 1$.
We have thus reached the desired contradiction.
\end{proof}

\section{Remaining proofs for \Cref{lem:MoMopt}}
\label{app:proof_lem_MoMopt}

In \Cref{sec:proofsketch_main}, we have already shown that the construction of $q$ as in \Cref{def:q} satisfies properties 2 and 4 of \Cref{def:neighborhood}. To complete the proof of \Cref{lem:MoMopt}, we show that $q$ satisfies property 1 in \Cref{app:proof_ub_MoMopt}, and property 3 in \Cref{app:proof_error_MoMopt}.

\subsection{Upper bounding $|\mu_q - \mu_p|$}
\label{app:proof_ub_MoMopt}

We show in this section that, for both cases in \Cref{def:q}, the construction of $q$ is such that $|\mu_q-\mu_p|$ is appropriately upper bounded.
We will show that $|\mu_q-\mu_p| \le \eps_{n,\delta}(p)$, as part of the proof showing that $q \in N_{n,\delta}(p)$.
We furthermore show other upper bounds of $|\mu_q-\mu_p|$, for example, that $|\mu_q-\mu_p| \le \frac{r}{4}$ if $r$ is the trimming radius for the construction of $p^*_n$ from $p$, which will be useful for the later sections.

\begin{lemma}
\label{lem:q-shift-upper-bound-large-case}
Assuming $p$ has mean 0, and is trimmed to the interval $[-r,r]$ when constructing $p^*_n$ (as in \Cref{def:trimmed}), then Case 1 (the large $|\mu_{p^*_n}-\mu_p|$ case) of \Cref{def:q} outputs a distribution $q$ such that $|\mu_q - \mu_p|\leq\frac{r}{4}$ and $|\mu_q - \mu_p|\leq \frac{1}{4} \eps_{n,\delta}(p)$.
\end{lemma}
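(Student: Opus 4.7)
The plan is to simply compute $\mu_q$ from the definition of $q$ as a convex combination and then invoke two elementary observations: that $\mu_{p^*_n}$ is bounded by the trimming radius $r$ (for the first bound), and that $|\mu_{p^*_n}-\mu_p|$ appears as one of the two summands in $\eps_{n,\delta}(p)$ (for the second bound).

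First I would expand $\mu_q$. Since $q = \lambda p + (1-\lambda) p^*_n$ is a convex combination of distributions, its mean is the same convex combination of means:
\[
\mu_q = \lambda \mu_p + (1-\lambda)\mu_{p^*_n} = (1-\lambda)\mu_{p^*_n} = \tfrac{1}{4}\mu_{p^*_n},
\]
where the second equality uses the normalizing assumption $\mu_p=0$, and the third uses $\lambda = 3/4$ from Case~1 of \Cref{def:q}. Thus $|\mu_q - \mu_p| = \tfrac{1}{4}|\mu_{p^*_n}|$.

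For the first claimed bound, I would note that $p^*_n$ is by construction supported on the trimming interval $[-r,r]$, so its mean satisfies $|\mu_{p^*_n}| \le r$, giving $|\mu_q - \mu_p| \le r/4$. For the second claimed bound, I would use that (again under $\mu_p=0$) $|\mu_{p^*_n}| = |\mu_{p^*_n}-\mu_p|$, and that by the definition of the error function in \Cref{def:trimmed},
\[
\eps_{n,\delta}(p) = |\mu_p - \mu_{p^*_n}| + \sigma_{p^*_n}\sqrt{\tfrac{4.5\ld}{n}} \ge |\mu_p - \mu_{p^*_n}|,
\]
since the standard deviation term is non-negative. Combining, $|\mu_q-\mu_p| = \tfrac{1}{4}|\mu_{p^*_n}-\mu_p| \le \tfrac{1}{4}\eps_{n,\delta}(p)$, as required.

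There is essentially no obstacle here: the lemma is a direct consequence of the linearity of means under convex combinations together with the fact that the $p^*_n$ trimming is carried out within the interval $[-r,r]$. The content is entirely in the setup from \Cref{def:q,def:trimmed}, and the factor $1/4$ emerges cleanly from the choice $\lambda = 3/4$.
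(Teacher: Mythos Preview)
Your proposal is correct and follows essentially the same approach as the paper's own proof: compute $\mu_q$ from the convex combination, use that $p^*_n$ is supported on $[-r,r]$ to bound $|\mu_{p^*_n}|\le r$, and use the definition of $\eps_{n,\delta}(p)$ to bound $|\mu_{p^*_n}-\mu_p|\le \eps_{n,\delta}(p)$.
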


\begin{proof}
The distribution $q$ is constructed to be a convex combination of $p$ and $p^*_n$, with interpolation parameter $1-\lambda$ equal to $\frac{1}{4}$ by definition.
Thus $|\mu_q-\mu_p|\leq \frac{1}{4}|\mu_{p^*_n}-\mu_p|$, where this last quantity is at most $\frac{r}{4}$ since $p^*_n$ is supported on $[-r,r]$ and thus can have mean at most distance $r$ away from the origin.
Furthermore, by the definition of $\eps_{n,\delta}(p)$, we have $|\mu_q - \mu_p| \le \frac{1}{4}|\mu_{p^*_n}-\mu_p| \le \frac{1}{4}\eps_{n,\delta}(p)$.
\end{proof}

\begin{lemma}
\label{lem:q-shift-upper-bound-small-case}
Assuming $p$ has mean 0, and is trimmed to the interval $[-r,r]$ when constructing $p^*_n$ (as in \Cref{def:trimmed}), then Case 2 (the small $|\mu_{p^*_n}-\mu_p|$ case) of \Cref{def:q} outputs a distribution $q$ such that $|\mu_q - \mu_p|\leq \frac{1}{8} \sigma_{p^*_n} \sqrt{\frac{\ld}{n}}$.
As a corollary, this is further upper bounded by $\eps_{n,\delta}(p)$.

Additionally, if $\frac{\ld}{n}$ is upper bounded by some sufficiently small absolute constant, then $\frac{1}{8} \sigma_{p^*_n} \sqrt{\frac{\ld}{n}}$ can also be bounded by $\frac{r}{4}$.
\end{lemma}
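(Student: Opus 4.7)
The plan is to directly read off the bound from the defining equation for $a$ in Case 2 of \Cref{def:q}, together with the observation that the downscaling factor $b$ only shrinks the mean shift. Specifically, I will work under the WLOG assumption $\mu_p = 0$, and assume WLOG that $q = bq^+$ for some $b \in [\tfrac{1}{2},1]$ (the case $q = bq^-$ being symmetric, by replacing $p$ with its reflection).

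First, I would rewrite the mean shift $|\mu_q - \mu_p| = |\mu_q|$ explicitly as
\begin{equation*}
    \mu_q = \int x\,\d q = b\int x\,\d q^+ = b\int x(\d q^+ - \d p) + b\int x\,\d p = b\int x(\d q^+ - \d p),
\end{equation*}
since $\int x\,\d p = \mu_p = 0$. As observed at the beginning of the proof of \Cref{lem:a_asym}, the integral $\int x(\d q^+ - \d p)$ is exactly the left-hand side of the equation defining $a$, which equals $\tfrac{1}{8}\sigma_{p^*_n}\sqrt{\ld/n}$ by the choice of $a$ in \Cref{def:q}. Hence $|\mu_q - \mu_p| = b\cdot \tfrac{1}{8}\sigma_{p^*_n}\sqrt{\ld/n} \le \tfrac{1}{8}\sigma_{p^*_n}\sqrt{\ld/n}$, because $b\le 1$.

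For the corollary that this is bounded by $\eps_{n,\delta}(p)$, I would simply compare against the definition $\eps_{n,\delta}(p) = |\mu_p - \mu_{p^*_n}| + \sigma_{p^*_n}\sqrt{4.5\ld/n}$, noting that the second term alone exceeds $\tfrac{1}{8}\sigma_{p^*_n}\sqrt{\ld/n}$ since $\sqrt{4.5} > \tfrac{1}{8}$. For the final ``$\le r/4$'' claim, I would use that $p^*_n$ is supported on $[-r,r]$ (by \Cref{def:trimmed} and the assumption $\mu_p = 0$), so every $x$ in its support satisfies $x^2 \le r^2$, which gives $\sigma_{p^*_n}^2 \le \Exp_{p^*_n}[X^2] \le r^2$. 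Substituting $\sigma_{p^*_n} \le r$ into the bound $\tfrac{1}{8}\sigma_{p^*_n}\sqrt{\ld/n}$ and using $\ld/n \le 4$ (which holds whenever $\ld/n$ is bounded by a sufficiently small absolute constant) yields $\tfrac{1}{8}r\sqrt{\ld/n} \le r/4$ as required.

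There is no real obstacle here: the entire statement is essentially a bookkeeping consequence of the normalization of \Cref{def:q} once one recognizes that the quantity in the defining equation for $a$ \emph{is} the first-moment shift $\mu_{q^+} - \mu_p$. The only thing to watch out for is keeping straight the distinction between the non-unit measure $q^+$ and its renormalization $q = bq^+$, but this only helps us, since $b \le 1$ means the renormalized mean shift can only be smaller.
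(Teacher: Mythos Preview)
Your proof is correct and follows essentially the same approach as the paper: identify $\int x(\d q^+ - \d p)$ with the left-hand side of the defining equation for $a$, then use $b\le 1$ to pass from $q^+$ to $q$, and finally bound $\sigma_{p^*_n}$ via the support of $p^*_n$. The only cosmetic difference is that you obtain the slightly sharper $\sigma_{p^*_n}\le r$ (via $\sigma_{p^*_n}^2 \le \Exp_{p^*_n}[X^2]\le r^2$) where the paper uses the cruder $\sigma_{p^*_n}\le 2r$, but this is immaterial under the ``sufficiently small $\ld/n$'' hypothesis.
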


\begin{proof}
Without loss of generality, suppose $q^+$ is the non-unit measure constructed by Case 2 of \Cref{def:q} which will be downscaled to create $q$.
We know from \Cref{lem:a_asym} that $\left|\int_{-\infty}^{\infty} x \, \d q^+\right| = \left|\int_{-\infty}^{\infty} x (\d q^+ - \d p)\right| = \frac{1}{8} \sigma_{p^*_n} \sqrt{\frac{\ld}{n}}$.
Since $q$ is a downscale of $q^+$, we then arrive at $\left|\mu_q\right| \le \left|\int_{-\infty}^{\infty} x \, \d q^+\right| = \frac{1}{8} \sigma_{p^*_n} \sqrt{\frac{\ld}{n}}$.
By the definition of $\eps_{n,\delta}(p)$, 
we have $|\mu_q - \mu_p| \le \frac{1}{8} \sigma_{p^*_n} \sqrt{\frac{\ld}{n}} \le \eps_{n,\delta}(p)$.

For the other corollary, observe that since $p^*_n$'s support is in $[-r, r]$, $\sigma_{p^*_n}$ is then upper bounded by $2r$.
Therefore, under the assumption that $\frac{\ld}{n}$ is bounded by a sufficiently small constant, we have $\frac{1}{8} \sigma_{p^*_n} \sqrt{\frac{\ld}{n}} \le \frac{r}{4}$.
\end{proof}

\subsection{Showing that $\eps_{n/3,\delta}(q) \le O(\eps_{n,\delta}(p))$}
\label{app:proof_error_MoMopt}

Since $\eps_{n/3, \delta}(q) = O\left(|\mu_q - \mu_{q^*_{n/3}}| +  \sigma_{q^*_{n/3}}\sqrt{\frac{\ld}{n}}\right)$, we will separately show that $|\mu_q - \mu_{q^*_{n/3}}| = O(\eps_{n,\delta}(p))$ (\Cref{lem:q-mean-shift-bound}) and $\sigma_{q^*_{n/3}}\sqrt{\frac{\ld}{n}} = O(\eps_{n,\delta}(p))$ (\Cref{lem:sigma-q-star-bound}).

For the second item, the proof of \Cref{lem:sigma-q-star-bound} is relatively self-contained, though requiring some minute calculations.
On the other hand, the analysis of $|\mu_q - \mu_{q^*_{n/3}}|$ is non-trivial.
To upper bound this term, we consider an intermediate (non-unit) measure $\tilde{q}$ resulting from trimming $q$ to the \emph{same} interval as $p^*_n$.
We then show \Cref{lem:q-one-interval-vs-true-interval} which bounds the difference between first moments of $q^*_{n/3}$ and $\tilde{q}$, and \Cref{lem:q-mean-shift-outside-sigma-bound,lem:q-mean-shift-outside-mean-bound} which bound the difference between the first moments of $q$ and $\tilde{q}$ in the two cases of the construction of $q$.
The combination of these three lemmas gives \Cref{lem:q-mean-shift-bound}, which bounds $|\mu_q - \mu_{q^*_{n/3}}|$ as desired.

In much of the analysis in this section, we will assume without loss of generality that $p$ has mean 0, is trimmed to the unit interval $[-1,1]$ when constructing $p^*_n$.

\begin{lemma}
\label{lem:q-one-interval-vs-true-interval}
Let $n$ be the number of samples and $\delta$ be the failure probability, and suppose there is a sufficiently small absolute constant which upper bounds $\frac{\ld}{n}$.
Given a distribution $p$ with $\mu_p = 0$ and whose trimming radius for constructing $p^*_n$ is equal to 1, suppose the distribution $q$ is such that $\frac{\d q}{\d p} \le 2$ and $|\mu_q|\leq\frac{1}{4}$.
Letting $q^*_{n/3}$ be the $(\frac{1.35}{n} \ld)$-trimmed version of distribution $q$, trimming to some interval $[\mu_q-r_q,\mu_q+r_q]$, then:

\begin{itemize}
    \item When $[\mu_q-r_q,\mu_q+r_q]$ is a subset of $[-1,1]$ then \[\left|\int_{-1}^1 (x-\mu_q) \, \d q - \int_{\mu_q - r_q}^{\mu_q + r_q} (x - \mu_q) \, \d q\right|\leq |\mu_p-\mu_q|+\sqrt{\frac{3}{5}}\sigma_{p^*_n}\sqrt{\frac{4.5\ld}{n}}\]
    \item When $[\mu_q-r_q,\mu_q+r_q]$ extends outside $[-1,1]$ then \[\left|\int_{-1}^1 (x-\mu_q) \, \d q - \int_{\mu_q - r_q}^{\mu_q + r_q} (x - \mu_q) \, \d q\right|\leq \frac{2.8125\ld}{n}\leq 2 \eps_{n,\delta}(p)\]
\end{itemize}

\end{lemma}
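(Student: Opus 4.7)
The plan is to rewrite the difference as a signed integral over the symmetric difference of the two intervals, and then handle the two cases with different techniques. Let $I_1 = [-1, 1]$ and $I_2 = [\mu_q - r_q, \mu_q + r_q]$, so that the difference equals $\int_{I_1 \setminus I_2}(x - \mu_q) \, \d q - \int_{I_2 \setminus I_1}(x - \mu_q) \, \d q$. Case 1 has $I_2 \setminus I_1 = \emptyset$, so only the first integral matters and the geometry is amenable to a Cauchy--Schwarz argument; Case 2 has both pieces nonempty, but the constraints $|\mu_q| \le \tfrac{1}{4}$ and $\frac{\d q}{\d p} \le 2$ combine to give a uniform pointwise bound.

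For Case 1, the $q$-mass of $I_1 \setminus I_2 \subseteq \mathbb{R} \setminus I_2$ is at most $\tfrac{1.35 \ld}{n}$. Applying Cauchy--Schwarz and then dominating $\d q \le 2 \, \d p$ yields $|D|^2 \le \tfrac{1.35\ld}{n} \cdot 2\int_{I_1}(x - \mu_q)^2 \, \d p$. The key simplification is that $\int_{I_1}(x - \mu_{p^*_n}) \, \d p = 0$, because $\mu_{p^*_n}$ is the conditional mean of $p|_{I_1}$; so expanding $(x - \mu_q)^2$ around $\mu_{p^*_n}$ kills the cross term, giving $\int_{I_1}(x - \mu_q)^2 \, \d p = (1 - \tfrac{0.45\ld}{n})(\sigma_{p^*_n}^2 + (\mu_{p^*_n} - \mu_q)^2)$. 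Combining through $\sqrt{a^2+b^2} \le a + b$ yields $|D| \le \sigma_{p^*_n}\sqrt{\tfrac{2.7\ld}{n}} + |\mu_{p^*_n} - \mu_q|\sqrt{\tfrac{2.7\ld}{n}}$; the first summand matches $\sqrt{\tfrac{3}{5}}\sigma_{p^*_n}\sqrt{\tfrac{4.5\ld}{n}}$ in the claim, and the residual needs to be absorbed into $|\mu_p - \mu_q|$.

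For Case 2, a preliminary argument rules out $I_2 \supseteq I_1$: this would force $q(I_2) \ge q(I_1) \ge 1 - 2p(\mathbb{R} \setminus I_1) = 1 - \tfrac{0.9\ld}{n}$, contradicting $q(I_2) = 1 - \tfrac{1.35\ld}{n}$. So $I_2$ extends strictly outside $I_1$ on exactly one side; assume WLOG the right. Then $\mu_q + r_q > 1$ and $\mu_q - r_q \ge -1$ force $\mu_q > 0$ and $r_q \le 1 + \mu_q \le \tfrac{5}{4}$, which implies the uniform bound $|x - \mu_q| \le \tfrac{5}{4}$ on both $I_1$ and $I_2$, hence on the symmetric difference. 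Bounding $q(I_1 \setminus I_2) \le \tfrac{1.35\ld}{n}$ as before, and $q(I_2 \setminus I_1) \le 2 p(\mathbb{R} \setminus I_1) = \tfrac{0.9\ld}{n}$ via the Radon--Nikodym bound, the triangle inequality plus the pointwise bound yields $|D| \le \tfrac{5}{4} \cdot \tfrac{2.25\ld}{n} = \tfrac{2.8125\ld}{n}$. The auxiliary inequality $\le 2 \eps_{n,\delta}(p)$ should follow from the structural constraint $r_q > 1 - |\mu_q| \ge \tfrac{3}{4}$ of Case 2, which forces $q$ (and hence $p$ via $\frac{\d q}{\d p} \le 2$) to have nontrivial spread, giving $\eps_{n,\delta}(p) = \Omega(1)$.

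The main obstacle is the residual $\sqrt{\tfrac{2.7\ld}{n}}\,|\mu_{p^*_n} - \mu_q|$ in Case 1: naive Cauchy--Schwarz does not detect how $\mu_q$ aligns with $\mu_{p^*_n}$, whereas the claim allows only a $|\mu_p - \mu_q|$ contribution. I expect to close this gap by splitting $I_1 \setminus I_2$ into its left and right pieces and exploiting sign cancellation between them, or by combining with the identity $\mu_p = 0 \Rightarrow \int_{\mathbb{R} \setminus I_1} x \, \d p = -(1 - \tfrac{0.45\ld}{n})\mu_{p^*_n}$, which provides a quantitative link between $\mu_{p^*_n}$ and the tail mass that should reveal the extra $|\mu_{p^*_n} - \mu_q|$ contribution as being of strictly smaller order, hence absorbable into the stated bound.
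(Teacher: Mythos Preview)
Your treatment of the second bullet is essentially the paper's argument: rule out $I_2\supseteq I_1$ by a mass count, deduce $r_q\le \tfrac{5}{4}$ from $|\mu_q|\le\tfrac14$, bound the integrand pointwise by $\tfrac54$, and bound the two masses by $\tfrac{1.35}{n}\ld$ and $\tfrac{0.9}{n}\ld$ respectively. One correction: the auxiliary inequality does \emph{not} come from $\eps_{n,\delta}(p)=\Omega(1)$ but from $\eps_{n,\delta}(p)=\Omega(\tfrac{1}{n}\ld)$. The mechanism is exactly the one you name: since at least $\tfrac{0.45}{n}\ld$ of the $q$-mass is trimmed from inside $[-1,1]$, and $\tfrac{\d q}{\d p}\le 2$, the set $S=I_1\setminus I_2$ carries $\ge \tfrac{0.225}{n}\ld$ of the $p$-mass, and every point of $S$ has $|x|\ge \mu_q-r_q+ r_q-? $ actually $|x|\ge r_q-|\mu_q|\ge\tfrac12$; hence $\sigma_{p^*_n}^2+\mu_{p^*_n}^2\ge \tfrac14\cdot\tfrac{0.225}{n}\ld$, which after one more step gives $\eps_{n,\delta}(p)\ge c\,\tfrac{1}{n}\ld$ with a constant large enough for the claim.

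For the first bullet, the ``main obstacle'' you flag dissolves with a one-line change, and the elaborate fixes you propose (sign cancellation across the two halves of $S$, or the tail identity for $\mu_{p^*_n}$) are unnecessary. The paper does not apply Cauchy--Schwarz to $(x-\mu_q)$; it first splits
\[
\int_S (x-\mu_q)\,\d q \;=\; \int_S(\mu_p-\mu_q)\,\d q \;+\; \int_S(x-\mu_p)\,\d q,
\]
so the constant piece is bounded by $|\mu_p-\mu_q|$ outright (the mass of $S$ is $\le 1$), and Cauchy--Schwarz is applied only to the second piece with center $\mu_p=0$, giving $\bigl(\int_S x^2\,\d q\bigr)^{1/2}\bigl(\int_S \d q\bigr)^{1/2}\le \sqrt{2\int_{-1}^1 x^2\,\d p}\cdot\sqrt{\tfrac{1.35}{n}\ld}$. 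Equivalently, in your framework you can simply triangle $|\mu_{p^*_n}-\mu_q|\le |\mu_{p^*_n}-\mu_p|+|\mu_p-\mu_q|$ and use $\sqrt{\tfrac{2.7}{n}\ld}\le 1$ to absorb the second summand into $|\mu_p-\mu_q|$; no deeper structure is needed.

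One caveat worth noting: both routes leave a residual $\sqrt{\tfrac{2.7}{n}\ld}\,|\mu_{p^*_n}|$ term, because $\int_{-1}^1 x^2\,\d p$ equals (up to the $1-\tfrac{0.45}{n}\ld$ factor) $\sigma_{p^*_n}^2+\mu_{p^*_n}^2$, not $\sigma_{p^*_n}^2$. The paper's proof asserts the latter and so slightly overstates the bound; your expansion around $\mu_{p^*_n}$ makes this explicit. This residual is harmless for the downstream use (it is dominated by $\eps_{n,\delta}(p)$), so you should not expect to eliminate it, and in particular you should not expend effort on the sign-cancellation or tail-identity arguments to try to remove it.
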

\begin{proof}[of the first bullet]
Define $S=[-1,1] \setminus [\mu_q-r_q,\mu_q+r_q]$. Firstly, due to triangle inequality, we have \[\left|\int_S (x - \mu_q) \, \d q \right| \le \left|\int_S (\mu_p - \mu_q) \, \d q \right| + \left|\int_S (x - \mu_p) \, \d q \right| \le |\mu_p - \mu_q| + \left|\int_S (x - \mu_p) \, \d q \right|\]

The Cauchy-Schwarz inequality then says that \[\left|\int_S (x - \mu_p) \, \d q \right| \leq \sqrt{\int_S (x - \mu_p)^2 \, \d q}\sqrt{\int_S \, \d q}\]

Note that the second integral in the right hand side is bounded by the amount of trimmed probability mass, which is at most $ \frac{1.35\ld}{n}$.
The first integral, since $\frac{\d q}{\d p} \leq 2$, is bounded by twice the variance of $p$ in $[-1,1]$, namely $2{\sigma_{p^*_n}}^2$. Thus the product of the square roots of the two integrals on the right hand side is bounded by $\sigma_{p^*_n}\sqrt{2\frac{1.35\ld}{n}}$, yielding the desired bound.
\end{proof}

\begin{proof}[of the second bullet]
Without loss of generality, we assume $\mu_q\geq\mu_p=0$.
Thus by the assumption of this case, that $[\mu_q-r_q,\mu_q+r_q]$ extends outside $[-1,1]$, we have that $\mu_q+r_q>1$, because of the asymmetry that $\mu_q\geq 0$.
Since $\mu_q\leq\frac{1}{4}$ by the lemma assumption, we also have $r_q\geq\frac{3}{4}$.

Recall that $p$ has a total of $\frac{0.45\ld}{n}$ mass outside $[-1,1]$ by the lemma assumption, and that $\frac{\d q}{\d p} \le 2$.
This implies that $q$ has at most $\frac{0.9\ld}{n}$ mass outside $[-1,1]$.
As $\frac{1.35\ld}{n}$ mass is trimmed from $q$ to construct $q^*_{n/3}$, there is thus at least $\frac{0.45\ld}{n}$ mass trimmed from $q$ \emph{inside} the interval $[-1,1]$.
Furthermore, since $\frac{\d q}{\d p} \le 2$, we conclude that there is thus at least $\frac{0.225\ld}{n}$ mass trimmed from $p$ inside the interval $[-1,1]$.
Now denote the set on which $q$ is trimmed, restricted to $[-1,1]$ by $S = [-1, 1] \setminus [\mu_q - r_q, \mu_q + r_q]$.
Since $p$ has at least $\frac{0.225\ld}{n}$ mass in $S$, meaning that $S \neq \emptyset$, and $\mu_q + r_q > 1$ from earlier, it must thus be the case that $\mu_q - r_q > -1$ and therefore $r_q < \frac{5}{4}$.

Since $\min_{x_i \in S} |x_i| > \frac{1}{2}$, we conclude that $(\sigma_{p^*_n})^2 + (\mu_{p^*_n})^2 = \int_{-1}^1 x^2 \, \d p^*_n \ge \int_{-1}^1 x^2 \, \d p \geq \int_S x^2 \, \d p \geq \frac{c}{2^2\cdot 20} \frac{\ld}{n}$.
Using the standard inequality that the $\ell_1$ norm is at least the $\ell_2$ norm, we have that $\sigma_{p^*_n}+\mu_{p^*_n}\geq \sqrt{(\sigma_{p^*_n})^2 + (\mu_{p^*_n})^2}\geq \sqrt{\frac{c}{2^2\cdot 20} \frac{\ld}{n}}$.
Thus, since $\sqrt{\frac{4.5\ld}{n}}\leq \frac{1}{3}$ by the lemma assumption, we have that $\eps_{n,\delta}(p) = \sigma_{p^*_n}\sqrt{\frac{4.5\ld}{n}}+\mu_{p^*_n}\geq \sqrt{\frac{9}{80}} \frac{4.5\ld}{n}$.
This yields the second inequality in the second bullet point.

We now prove the first inequality in the second bullet, which bounds the difference between two integrals; we bound this by bounding the probability mass in each integral and multiplying this by a bound on the integrand of each integral.
Explicitly, the amount of probability mass of $q$ that is in $[-1,1]$ but outside of $[\mu_q-r_q,\mu_q+r_q]$ is at most $\frac{1.35\ld}{n}$, and furthermore, for every $x \in [-1,1] \setminus [\mu_q-r_q,\mu_q+r_q]$, we have $|x - \mu_q| \le \frac{5}{4}$ due to $|\mu_q| \le \frac{1}{4}$.
In parallel, the amount of probability mass of $p$ that is outside of $[-1,1]$ but in $[\mu_q-r_q,\mu_q+r_q]$ is at most $\frac{0.45\ld}{n}$, and since $\frac{\d q}{\d p} \le 2$, we conclude that $q$ has at most $\frac{0.9\ld}{n}$ mass outside of $[-1,1]$ but in $[\mu_q-r_q,\mu_q+r_q]$.
Also, for every $x \in [\mu_q-r_q,\mu_q+r_q] \setminus [-1,1]$, we have $|x - \mu_q| \le \frac{5}{4}$ because $r_q \le \frac{5}{4}$.
Thus, the left hand side of the second bullet is at most $(2+3)\frac{5}{4}0.45 \frac{\ld}{n}=2.8125 \frac{\ld}{n}$, as claimed.
\end{proof}

\begin{lemma}
\label{lem:q-mean-shift-outside-sigma-bound}
Given a distribution $p$ with $\mu_p = 0$ and whose trimming radius for constructing $p^*_n$ is equal to 1, let $q$ be the distribution constructed from $p$ as in Case 2 (the small $|\mu_{p^*_n}-\mu_p|$ case) of \Cref{def:q}.
Then $\left|\int_{\Real \setminus [-1, 1]} (x - \mu_q) \, \d q \right| \leq 5 \sigma_{p^*_n}\sqrt{\frac{\ld}{n}}$.
\end{lemma}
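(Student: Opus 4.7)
The plan is to decompose
\[
\int_{\Real \setminus [-1,1]} (x-\mu_q) \, \d q \;=\; \int_{\Real \setminus [-1,1]} x \, \d q \;-\; \mu_q \cdot q(\Real \setminus [-1,1])
\]
and control each term separately. The correction term is negligible for small $\ld/n$: by \Cref{lem:q-shift-upper-bound-small-case} we have $|\mu_q| \le \tfrac{1}{8}\sigma_{p^*_n}\sqrt{\ld/n}$, and since $\frac{\d q}{\d p}\le 2$ while $p$ places only $\tfrac{0.45\ld}{n}$ mass outside $[-1,1]$ by the definition of $p^*_n$, we get $q(\Real\setminus[-1,1])\le \tfrac{0.9\ld}{n}$. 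Hence the correction is $O(\sigma_{p^*_n}(\ld/n)^{3/2})$, which is negligible compared to $\sigma_{p^*_n}\sqrt{\ld/n}$.

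For the main integral, I would assume without loss of generality that $q = bq^+$ with $b\in[\tfrac12,1]$; the $q^-$ branch is identical by reflecting $p$ about $\mu_p=0$, which leaves the quantity of interest unchanged in absolute value. Writing $\frac{\d q^+}{\d p}(x) = 1+f(x)$ with $f(x)=\min(1,\max(-1,ax))$, one splits
\[
\int_{\Real \setminus [-1,1]} x \, \d q^+ \;=\; \int_{\Real \setminus [-1,1]} x \, \d p \;+\; \int_{\Real \setminus [-1,1]} x\, f(x) \, \d p.
\]

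The two summands are then bounded directly. For the first, since $\mu_p=0$ we have $\int_{\Real\setminus[-1,1]} x \, \d p = -\int_{-1}^{1} x\,\d p = -\bigl(1-\tfrac{0.45\ld}{n}\bigr)\mu_{p^*_n}$ by the definition of $p^*_n$, whose absolute value is at most $|\mu_{p^*_n}|\le \sigma_{p^*_n}\sqrt{4.5\ld/n}$ by the Case 2 hypothesis $|\mu_p-\mu_{p^*_n}|\le \sigma_{p^*_n}\sqrt{4.5\ld/n}$ from \Cref{def:q}. For the second summand, the key observation is that $x\,f(x)\ge 0$ pointwise: $f$ matches the sign of $x$ throughout (the inner $ax$ has the sign of $x$, and the saturation to $[-1,1]$ preserves sign). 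Thus restricting a non-negative integrand to $\Real\setminus[-1,1]$ only shrinks the integral, and the defining equation for $a$ in Case 2 of \Cref{def:q} is exactly $\int_{\Real} x\,f(x) \, \d p = \tfrac{1}{8}\sigma_{p^*_n}\sqrt{\ld/n}$, giving $\bigl|\int_{\Real\setminus[-1,1]} x\,f(x)\,\d p\bigr|\le \tfrac{1}{8}\sigma_{p^*_n}\sqrt{\ld/n}$.

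Combining with $b\le 1$ yields $\bigl|\int_{\Real\setminus[-1,1]} x \, \d q\bigr| \le \bigl(\sqrt{4.5}+\tfrac{1}{8}\bigr)\sigma_{p^*_n}\sqrt{\ld/n} < 2.3\,\sigma_{p^*_n}\sqrt{\ld/n}$, which together with the negligible correction term falls comfortably below the target $5\,\sigma_{p^*_n}\sqrt{\ld/n}$. The only subtlety, and the step most likely to require care, is recognizing that the defining identity of $a$ is \emph{precisely} the bound we need for the $\int x\,f(x)\,\d p$ term: without that observation one is tempted into a Cauchy--Schwarz argument that produces an extraneous $1/a$ factor and fails to control the small-$\sigma_{p^*_n}$ regime. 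Everything else amounts to bookkeeping around $\d q/\d p\le 2$ and the Case 2 hypothesis.
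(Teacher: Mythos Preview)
Your proposal is correct and follows essentially the same approach as the paper's proof: both split off the $\mu_q$ contribution, pass to $q^+$, decompose $\d q^+ = (1+f)\,\d p$, bound $\int_{\Real\setminus[-1,1]} x\,\d p$ via the Case~2 hypothesis on $|\mu_{p^*_n}|$, and bound $\int_{\Real\setminus[-1,1]} x\,f(x)\,\d p$ using non-negativity of $x\,f(x)$ together with the defining identity for $a$. Your constants are in fact slightly sharper than the paper's, since you retain the mass factor $q(\Real\setminus[-1,1])$ in the correction term and invoke the defining equation directly rather than routing through $|\mu_q|$ again.
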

 
\begin{proof}
Notice that
\[\left|\int_{\Real \setminus [-1, 1]} (x - \mu_q) \, \d q \right| \leq \left|\int_{\Real \setminus [-1, 1]} x \, \d q \right| + \left|\int_{\Real \setminus [-1, 1]} -\mu_q \, \d q \right| \leq \left|\int_{\Real \setminus [-1, 1]} x \, \d q\right| + \left|\mu_q\right|\]
and since $|\mu_q| \le \frac{1}{8}\sigma_{p^*_n} \sqrt{\frac{\ld}{n}}$ by \Cref{lem:q-shift-upper-bound-small-case}, we only have to bound the first term by some constant multiple of $\sigma_{p^*_n} \sqrt{\frac{\ld}{n}}$.

Recall the construction of $q$ in \Cref{def:q}, as a downscaled version of either $q^+$ or $q^-$.
Without loss of generality (by reflecting $p$ and $q$ as appropriate), we assume we use $q^+$.
Specifically, $q^+$ is the non-unit measure such that $\frac{\d q^+}{\d p}(x) = 1+ \min(1,\max(-1,a x))$ for some value $a$ specified in \Cref{def:q}. 
Since $q$ is a downscaled version of $q^+$, we will bound $\left|\int_{\Real \setminus [-1, 1]} x \, \d q\right|\leq \left|\int_{\Real \setminus [-1, 1]} x \, \d q^+\right|$ by a constant multiple of $\sigma_{p^*_n} \sqrt{\frac{\ld}{n}}$.

Observe that for any $a > 0$, we have that $x$ and $\min(1, \max(-1, ax))$ have the same sign everywhere, which means 

\[\left|\int_{\Real \setminus [-1, 1]} x \min(1, \max(-1, ax))) \, \d p\right| \leq \left|\int_{\Real} x \min(1, \max(-1, ax))) \, \d p\right|\]

Additionally, because $\frac{\ld}{n}$ is upper bounded by a sufficiently small absolute constant and $|\mu_{p^*_n}| \le \sigma_{p^*_n} \sqrt{\frac{4.5}{n}\ld}$ from the lemma assumption, we get $|\frac{\mu_{p^*_n}}{1 - \frac{0.45}{n} \ld}| \le 2 \sigma_{p^*_n} \sqrt{\frac{0.45}{n}\ld} \le 4.5 \sigma_{p^*_n} \sqrt{\frac{\ld}{n}}$.

We can now upper bound $\left|\int_{\Real \setminus [-1, 1]} x \, \d q^+\right|$ as follows.

\begin{align*}
  \left|\int_{\Real \setminus [-1, 1]} x \, \d q^+\right| &= \left|\int_{\Real \setminus [-1, 1]} x (1 + \min(1, \max(-1, ax))) \, \d p\right| \\
  &\leq \left|\int_{\Real \setminus [-1, 1]} x \, \d p \right| + \left|\int_{\Real \setminus [-1, 1]} x \min(1, \max(-1, ax))) \, \d p\right| \\
  &\leq \left|\int_{\Real \setminus [-1, 1]} x \, \d p \right| + \left|\int_{\Real} x \min(1, \max(-1, ax))) \, \d p\right|\\
  &= \left|\int_{\Real \setminus [-1, 1]} x \, \d p \right| + \left|\int_{\Real} x (\d q^+ - \d p) \right| \\
  &= \left| \frac{\mu_{p^*_n}}{1 - \frac{0.45}{n} \ld}\right|+ \left|\int_{\Real} x \, \d q^+ \right| \quad \text{since $\mu_p = 0$}\\
  &\le \left| \frac{\mu_{p^*_n}}{1 - \frac{0.45}{n} \ld}\right| + 2 \left|\mu_q \right| \quad \text{since $q$ is a downscale of $q^+$ by factor at most 2}\\
  &\le 5 \sigma_{p_n^*}\sqrt{\frac{\ld}{n}} \quad \text{using the prior bounds on the two terms and that $4.5 + \frac{2}{8} \le 5$}
\end{align*}
\end{proof}

\begin{lemma}
\label{lem:q-mean-shift-outside-mean-bound}
Given a distribution $p$ with $\mu_p = 0$ and whose trimming radius for constructing $p^*_n$ is equal to 1, let $q$ be the distribution constructed from $p$ as in Case 1 (the large $|\mu_{p^*_n} - \mu_p|$ case) of \Cref{def:q}.
Then $\left|\int_{\Real \setminus [-1, 1]} (x - \mu_q) \, \d q \right|\leq 5|\mu_{p^*_n}-\mu_p|$
\end{lemma}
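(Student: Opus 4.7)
The plan is to exploit the simple structure of $q$ in Case 1 outside the trimming interval. Since $p^*_n$ is supported on $[-1,1]$ (by the lemma assumption that the trimming radius equals 1), the construction $q = \lambda p + (1-\lambda)p^*_n$ with $\lambda = 3/4$ gives $\d q = \lambda \,\d p$ on $\Real \setminus [-1,1]$. Consequently,
\[
\int_{\Real \setminus [-1,1]} (x - \mu_q)\,\d q \;=\; \lambda \int_{\Real \setminus [-1,1]} (x - \mu_q)\,\d p,
\]
and by the triangle inequality this is bounded in absolute value by $\lambda \bigl|\int_{\Real \setminus [-1,1]} x\,\d p\bigr| + \lambda |\mu_q|\cdot \Pr_p\!\left[\Real \setminus [-1,1]\right]$.

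Next I would bound each of the two pieces in terms of $|\mu_{p^*_n}|$ (which equals $|\mu_{p^*_n} - \mu_p|$ since $\mu_p = 0$). For the first piece, since $p$ has mean $0$, we have $\int_{\Real \setminus [-1,1]} x\,\d p = -\int_{-1}^{1} x\,\d p$; and by the definition of $p^*_n$ as $p$ conditioned on $[-1,1]$, $\int_{-1}^{1} x\,\d p = (1 - \tfrac{0.45\ld}{n})\,\mu_{p^*_n}$. So $\bigl|\int_{\Real \setminus [-1,1]} x\,\d p\bigr| \le |\mu_{p^*_n}|$. For the second piece, \Cref{lem:q-shift-upper-bound-large-case} directly gives $|\mu_q| \le \tfrac{1}{4}|\mu_{p^*_n}|$, and $\Pr_p[\Real \setminus [-1,1]] = \tfrac{0.45\ld}{n}$ is bounded by a small absolute constant.

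Combining these bounds yields
\[
\left|\int_{\Real \setminus [-1,1]} (x - \mu_q)\,\d q\right| \;\le\; \lambda\left(1 + \tfrac{1}{4}\cdot\tfrac{0.45\ld}{n}\right)|\mu_{p^*_n}|,
\]
which is much less than $5|\mu_{p^*_n}| = 5|\mu_{p^*_n} - \mu_p|$ (indeed, it is bounded by something like $\tfrac{3}{4}(1+o(1))|\mu_{p^*_n}|$). There is no real obstacle here beyond careful bookkeeping: the constant $5$ in the statement is quite loose, and the proof is essentially a direct calculation using the explicit form of $q$ outside $[-1,1]$, the mean-zero assumption on $p$, the definition of $p^*_n$ as a conditional distribution, and the already-established upper bound on $|\mu_q|$.
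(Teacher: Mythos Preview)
Your proposal is correct and follows essentially the same approach as the paper: both arguments use that $\d q = \lambda\,\d p$ outside $[-1,1]$, split via the triangle inequality into a $\mu_q$-term and an $\int_{\Real\setminus[-1,1]} x\,\d p$ term, relate the latter to $\mu_{p^*_n}$ via $\mu_p=0$, and bound $|\mu_q|$ by a constant times $|\mu_{p^*_n}|$. Your bookkeeping is slightly tighter (you retain the factors $\lambda$ and $\tfrac{0.45\ld}{n}$ that the paper simply drops), but the structure of the argument is identical.
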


\begin{proof}
From the construction, $q$ outside $[-1,1]$ just consists of the corresponding portion of $p$ multiplied by $\lambda\in (0,1)$. Thus 

\begin{align*}
\left|\int_{\Real \setminus [-1, 1]} (x-\mu_q) \, \d q\right| &= \lambda \left|\int_{\Real \setminus [-1, 1]} (x - \mu_q) \, \d p\right| \\
    &= \lambda \left|(\mu_q-\mu_p)\cdot\frac{0.45\ld}{n}+\int_{\Real \setminus [-1, 1]} (x - \mu_p) \, \d p\right| \\
    &\leq |\mu_p-\mu_q|+\frac{|\mu_{p^*_n}-\mu_p|}{1-\frac{0.45\ld}{n}}
\end{align*}
Since $q$ is an interpolation between $p$ and $p^*_n$, the mean of $q$ is in between the mean of $p$ and the mean of $p^*_n$, and thus $|\mu_p - \mu_q| \le |\mu_{p^*_n}-\mu_p|$.
Furthermore, since $\frac{1}{n}\ld$ is upper bounded by some sufficiently small absolute constant, the entire bound above can be finally bounded by $5|\mu_{p^*_n}-\mu_p|$, as desired.
\end{proof}

\begin{lemma}
\label{lem:q-mean-shift-bound}
Given a distribution $p$ with $\mu_p = 0$ and whose trimming radius for constructing $p^*_n$ is equal to 1, consider constructing $q$ according to \Cref{def:q}.
Recall also the notation for the distribution $q^*_{n/3}$ which is the $(\frac{1.35}{n} \ld)$-trimmed version of $q$ as in \Cref{def:trimmed}, and suppose $q^*_{n/3}$ is formed by trimming $q$ to some interval $[\mu_q-r_q,\mu_q+r_q]$.
Assuming that $\frac{\ld}{n}$ is upper bounded by some sufficiently small absolute constant, then $|\mu_q - \mu_{q^*_{n/3}}| \le 50\eps_{n,\delta}(p)$.
\end{lemma}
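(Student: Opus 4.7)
The plan is to express $|\mu_q - \mu_{q^*_{n/3}}|$ as the magnitude of a tail integral of $q$, then decompose this tail integral into pieces that are already bounded by the lemmas cited in the same appendix. Write $\beta = \frac{1.35}{n}\ld$. Since $q^*_{n/3}$ is the conditioning of $q$ on $[\mu_q - r_q,\mu_q+r_q]$, a direct calculation gives
\begin{equation*}
\mu_q - \mu_{q^*_{n/3}} \;=\; \frac{1}{1-\beta}\int_{\Real\setminus[\mu_q-r_q,\mu_q+r_q]}(x-\mu_q)\,\d q \;=\; -\frac{1}{1-\beta}\int_{\mu_q-r_q}^{\mu_q+r_q}(x-\mu_q)\,\d q,
\end{equation*}
where the second equality uses $\int_{\Real}(x-\mu_q)\,\d q = 0$. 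This reduces the task to bounding $\bigl|\int_{\mu_q-r_q}^{\mu_q+r_q}(x-\mu_q)\,\d q\bigr|$ by $O(\eps_{n,\delta}(p))$.

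Next, I would use the trivial interval $[-1,1]$ (the trimming interval of $p$) as a pivot. By triangle inequality,
\begin{equation*}
\left|\int_{\mu_q-r_q}^{\mu_q+r_q}(x-\mu_q)\,\d q\right| \;\le\; \left|\int_{-1}^{1}(x-\mu_q)\,\d q\right| + \left|\int_{\mu_q-r_q}^{\mu_q+r_q}(x-\mu_q)\,\d q - \int_{-1}^{1}(x-\mu_q)\,\d q\right|.
\end{equation*}
The second term is exactly what \Cref{lem:q-one-interval-vs-true-interval} controls: in either subcase the bound is at most $|\mu_p-\mu_q| + \sqrt{3/5}\,\sigma_{p^*_n}\sqrt{4.5\ld/n}$ or $2\eps_{n,\delta}(p)$, and using $|\mu_p-\mu_q|\le \eps_{n,\delta}(p)$ (\Cref{lem:q-shift-upper-bound-large-case,lem:q-shift-upper-bound-small-case}) together with $\sigma_{p^*_n}\sqrt{4.5\ld/n}\le\eps_{n,\delta}(p)$, this is at most $2\eps_{n,\delta}(p)$ in both subcases. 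Before invoking \Cref{lem:q-one-interval-vs-true-interval}, I would first verify its hypothesis $|\mu_q|\le 1/4$, which follows directly from \Cref{lem:q-shift-upper-bound-large-case} in Case~1 and from \Cref{lem:q-shift-upper-bound-small-case} in Case~2 (since $\frac{1}{8}\sigma_{p^*_n}\sqrt{\ld/n}\le \tfrac{r}{4}=\tfrac14$ for sufficiently small $\ld/n$).

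For the first term, applying the centering identity a second time gives
\begin{equation*}
\int_{-1}^{1}(x-\mu_q)\,\d q \;=\; -\int_{\Real\setminus[-1,1]}(x-\mu_q)\,\d q,
\end{equation*}
and the right-hand side is bounded by \Cref{lem:q-mean-shift-outside-mean-bound} in Case~1 of \Cref{def:q} (large $|\mu_{p^*_n}-\mu_p|$), yielding $\le 5|\mu_{p^*_n}-\mu_p|\le 5\eps_{n,\delta}(p)$; and by \Cref{lem:q-mean-shift-outside-sigma-bound} in Case~2 (small $|\mu_{p^*_n}-\mu_p|$), yielding $\le 5\sigma_{p^*_n}\sqrt{\ld/n}$, which after absorbing the $\sqrt{4.5}$ factor is also $O(\eps_{n,\delta}(p))$.

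Assembling the pieces, in both cases
\begin{equation*}
|\mu_q - \mu_{q^*_{n/3}}| \;\le\; \frac{1}{1-\beta}\Bigl(5\eps_{n,\delta}(p) + 2\eps_{n,\delta}(p)\Bigr) \;\le\; 50\,\eps_{n,\delta}(p),
\end{equation*}
with the final inequality using that $\beta=\frac{1.35}{n}\ld$ is bounded above by a sufficiently small absolute constant, so $1/(1-\beta)$ is close to $1$. The main (minor) obstacles will be (i) correctly verifying $|\mu_q|\le 1/4$ so the hypothesis of \Cref{lem:q-one-interval-vs-true-interval} holds, and (ii) noting that the two outside-of-$[-1,1]$ lemmas apply to different construction cases of \Cref{def:q}, so that a case split is needed even though both cases yield the same $O(\eps_{n,\delta}(p))$ bound. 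The constant $50$ is highly slack, which gives plenty of margin for any minor slippage in the bookkeeping.
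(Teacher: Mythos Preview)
Your proposal is correct and essentially identical to the paper's proof: both start from the identity $(1-\beta)(\mu_q-\mu_{q^*_{n/3}})=\int_{\Real\setminus[\mu_q-r_q,\mu_q+r_q]}(x-\mu_q)\,\d q$, pivot on the interval $[-1,1]$ via the triangle inequality, bound the ``outside $[-1,1]$'' piece using \Cref{lem:q-mean-shift-outside-mean-bound,lem:q-mean-shift-outside-sigma-bound} in the two cases of \Cref{def:q}, bound the ``difference of intervals'' piece via \Cref{lem:q-one-interval-vs-true-interval} after verifying $|\mu_q|\le 1/4$ from \Cref{lem:q-shift-upper-bound-large-case,lem:q-shift-upper-bound-small-case}, and absorb the $1/(1-\beta)$ factor using the smallness of $\ld/n$. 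The only cosmetic difference is that you work with the complementary integral $\int_{\mu_q-r_q}^{\mu_q+r_q}(x-\mu_q)\,\d q$ rather than the tail integral directly, which changes nothing since they are negatives of each other.
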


\begin{proof}
By the definition of $q^*_{n/3}$, 
\[\left(1 - \frac{1.35\ld}{n}\right) \mu_q + \frac{1.35\ld}{n} \mu_q = \mu_q = \left(1 - \frac{1.35\ld}{n}\right) \mu_{q^*_{n/3}} + \int_{\Real \setminus [\mu_q - r_q, \mu_q + r_q]} x \, \d q\]

Further noting that $\int_{\Real \setminus [\mu_q - r_q, \mu_q + r_q]}  \mu_q \, \d q = \frac{1.35\ld}{n} \mu_q$, since we trim exactly $\frac{1.35\ld}{n}$ probability mass outside of $[\mu_q - r_q, \mu_q + r_q]$, we can rearrange the above to get
\[\left(1 - \frac{1.35\ld}{n}\right) (\mu_q - \mu_{q^*_{n/3}}) = \int_{\Real \setminus [\mu_q - r_q, \mu_q + r_q]} (x - \mu_q) \, \d q\]
As a result, to bound $|\mu_q - \mu_{q^*_{n/3}}|$, it suffices to bound $\left|\int_{\Real \setminus [\mu_q - r_q, \mu_q + r_q]} (x - \mu_q) \, \d q\right|$.

By the triangle inequality, we have that
\begin{align*}
    \left|\int_{\Real \setminus [\mu_q - r_q, \mu_q + r_q]} (x - \mu_q) \, \d q\right|
    &= \left|\int_{\Real \setminus [-1,1]} (x - \mu_q) \, \d q + \int_{-1}^1 (x-\mu_q) \, \d q - \int_{\mu_q-r_q}^{\mu_q+r_q} (x-\mu_q) \, \d q \right|\\
    &\leq \left|\int_{\Real \setminus [-1, 1]} (x - \mu_q) \, \d q\right| + \left|\int_{-1}^1 (x-\mu_q) \, \d q - \int_{\mu_q-r_q}^{\mu_q+r_q} (x-\mu_q) \, \d q\right|
\end{align*} 
and we bound the two terms separately.

By~\Cref{lem:q-mean-shift-outside-sigma-bound,lem:q-mean-shift-outside-mean-bound}, we have $\left|\int_{\Real \setminus [-1, 1]} (x - \mu_q) \, \d q\right| \leq 5|\mu_p - \mu_{p^*_n}| + 5 \sigma_{p^*_n}\sqrt{\frac{\ld}{n}}$ in either case of the construction of $q$.

To apply \Cref{lem:q-one-interval-vs-true-interval}, we need to verify that the construction of $q$ is such that $|\mu_q-\mu_p| \le 1/4$ when the trimming radius for constructing $p^*_n$ from $p$ is 1.
This was shown in \Cref{lem:q-shift-upper-bound-large-case,lem:q-shift-upper-bound-small-case}.
Therefore, by the two cases of~\Cref{lem:q-one-interval-vs-true-interval}, we have $\left|\int_{-1}^1 (x-\mu_q) \, \d q - \int_{\mu_q-r_q}^{\mu_q+r_q} (x-\mu_q) \, \d q\right| \leq \max\left(|\mu_p-\mu_q| + \sqrt{\frac{3}{5}}\sigma_{p^*_n}\sqrt{\frac{4.5\ld}{n}}\,,\, 2\eps_{n,\delta}(p)\right)$.
From \Cref{lem:q-shift-upper-bound-large-case,lem:q-shift-upper-bound-small-case} again, we have that $|\mu_p - \mu_q| \le \eps_{n,\delta}(p)$.

Combining all these bounds, we have 
\begin{align*}
|\mu_q - \mu_{q^*_{n/3}}| &\le \frac{1}{1 - \frac{1.35\ld}{n}}\left(\left|\int_{\Real \setminus [-1, 1]} (x - \mu_q) \, \d q\right| + \left|\int_{-1}^1 (x-\mu_q) \, \d q - \int_{\mu_q-r_q}^{\mu_q+r_q} (x-\mu_q) \, \d q\right|\right)\\
&\le 2 \left(\left|\int_{\Real \setminus [-1, 1]} (x - \mu_q) \, \d q\right| + \left|\int_{-1}^1 (x-\mu_q) \, \d q - \int_{\mu_q-r_q}^{\mu_q+r_q} (x-\mu_q) \, \d q\right|\right)\\
&\le 10|\mu_p - \mu_{p^*_n}| + 10 \sigma_{p^*_n}\sqrt{\frac{\ld}{n}} + 2\max\left(\eps_{n,\delta}(p) + \sqrt{\frac{3}{5}}\sigma_{p^*_n}\sqrt{\frac{4.5\ld}{n}}\,,\, 2\eps_{n,\delta}(p)\right)\\
&\le 50 \eps_{n,\delta}(p)
\end{align*}
where the last inequality uses the definition of $\eps_{n,\delta}(p) = |\mu_p - \mu_{p^*_n}| + \sigma_{p^*_n}\sqrt{\frac{4.5\ld}{n}}$.
\end{proof}

\begin{lemma}
\label{lem:sigma-q-star-bound}
Given a distribution $p$ with $\mu_p = 0$ and whose trimming radius for constructing $p^*_n$ is equal to 1, consider any distribution $q$ such that $\frac{\d q}{\d p} \le 2$ and $|\mu_q|\leq\frac{1}{4}$.
Recall the notation for the distribution $q^*_{n/3}$ which is the $(\frac{1.35}{n} \ld)$-trimmed version of $q$ as in \Cref{def:trimmed}, and suppose $q^*_{n/3}$ is formed by trimming $q$ to some interval $[\mu_q-r_q,\mu_q+r_q]$.
Assuming that $\frac{\ld}{n}$ is upper bounded by some sufficiently small absolute constant, then $\sigma_{q^*_{n/3}} \le 50(\sigma_{p^*_n} + |\mu_p - \mu_{p^*_n}|)$.
\end{lemma}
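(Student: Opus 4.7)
\textbf{Proof plan for the bound on $\sigma_{q^*_{n/3}}$.}
The plan is to reduce the variance of $q^*_{n/3}$ around its own mean to the second moment of $q^*_{n/3}$ around $\mu_p$, then unfold the trimming operation and push everything back to an integral against $p$ via the density ratio bound $\frac{\d q}{\d p}\le 2$. Concretely, first I would use the fact that the mean-squared deviation is minimized at the mean to write
\[
\sigma_{q^*_{n/3}}^2 \;\le\; \int (x-\mu_p)^2\,\d q^*_{n/3}
\;=\; \frac{1}{1-\frac{1.35\ld}{n}}\int_{\mu_q-r_q}^{\mu_q+r_q}(x-\mu_p)^2\,\d q
\;\le\; 2\int_{\mu_q-r_q}^{\mu_q+r_q}(x-\mu_p)^2\,\d q,
\]
using that $\frac{\ld}{n}$ is a sufficiently small constant. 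Since $\mu_p=0$, this is $2\int_{\mu_q-r_q}^{\mu_q+r_q}x^2\,\d q$.

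Now I would case-split exactly as in \Cref{lem:q-one-interval-vs-true-interval}. In the easy case $[\mu_q-r_q,\mu_q+r_q]\subseteq[-1,1]$, I would extend the domain of integration to $[-1,1]$ and then use $\d q\le 2\,\d p$ to get
\[
\int_{\mu_q-r_q}^{\mu_q+r_q}x^2\,\d q \;\le\; 2\int_{-1}^{1}x^2\,\d p
\;=\; 2\bigl(1-\tfrac{0.45\ld}{n}\bigr)\bigl(\sigma_{p^*_n}^2+\mu_{p^*_n}^2\bigr)
\;\le\; 4(\sigma_{p^*_n}^2+|\mu_p-\mu_{p^*_n}|^2),
\]
where I used the standard identity $E_{p^*_n}[X^2]=\sigma_{p^*_n}^2+\mu_{p^*_n}^2$ together with $\mu_p=0$. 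Combining gives $\sigma_{q^*_{n/3}}\le \sqrt{16}(\sigma_{p^*_n}+|\mu_p-\mu_{p^*_n}|)$, well within the claimed constant.

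In the harder case, $[\mu_q-r_q,\mu_q+r_q]$ extends outside $[-1,1]$, I would split the integral into its parts inside and outside $[-1,1]$. The inside part is bounded exactly as above. For the outside part I would reuse the geometric bookkeeping from the second bullet of \Cref{lem:q-one-interval-vs-true-interval}: from $|\mu_q|\le \tfrac14$ and the fact that one endpoint of $[\mu_q-r_q,\mu_q+r_q]$ exits $[-1,1]$, one gets $r_q<\tfrac54$, so $|x|\le \tfrac{3}{2}$ on the outside portion, and $q$ has at most $\tfrac{0.9\ld}{n}$ mass outside $[-1,1]$ (since $p$ has $\tfrac{0.45\ld}{n}$ there and $\d q\le 2\,\d p$). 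This contributes $O(\ld/n)$ to the second moment.

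The key step is then to absorb this $O(\ld/n)$ term into $\sigma_{p^*_n}^2+|\mu_p-\mu_{p^*_n}|^2$. Exactly as in the second-bullet argument of \Cref{lem:q-one-interval-vs-true-interval}, the existence of a nonempty trimmed set $S\subseteq[-1,1]$ containing points with $|x|>\tfrac12$ and carrying at least $\tfrac{0.225\ld}{n}$ mass of $p$ forces
\[
\sigma_{p^*_n}^2+\mu_{p^*_n}^2 \;=\; \int_{-1}^1 x^2\,\d p^*_n \;\ge\; \int_S x^2\,\d p \;\ge\; \Omega\!\left(\tfrac{\ld}{n}\right),
\]
so the outside contribution is already $O(\sigma_{p^*_n}^2+|\mu_p-\mu_{p^*_n}|^2)$. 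Adding the two pieces, taking a square root, and using $\sqrt{a^2+b^2}\le a+b$ for nonnegative $a,b$ yields $\sigma_{q^*_{n/3}}\le 50(\sigma_{p^*_n}+|\mu_p-\mu_{p^*_n}|)$ with room to spare. The main (and only real) obstacle is the outside-$[-1,1]$ bookkeeping in the second case, which is handled by importing the same asymmetry argument already developed for \Cref{lem:q-one-interval-vs-true-interval}; once that lower bound on $\sigma_{p^*_n}^2+\mu_{p^*_n}^2$ is in hand, every other step is routine.
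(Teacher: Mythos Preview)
The proposal is correct and follows essentially the same approach as the paper's proof: bound $\sigma_{q^*_{n/3}}^2$ by the second moment of $q^*_{n/3}$ around $\mu_p=0$, unfold the trimming and use $\d q\le 2\,\d p$ to push everything to $\int_{-1}^{1}x^2\,\d p = (1-\tfrac{0.45\ld}{n})(\sigma_{p^*_n}^2+\mu_{p^*_n}^2)$, and handle any overhang outside $[-1,1]$ by the same mass-counting argument (at least $\tfrac{0.45\ld}{n}$ of $q$-mass must be trimmed inside $[-1,1]$, forcing $r_q<\tfrac54$ and yielding the lower bound $\sigma_{p^*_n}^2+\mu_{p^*_n}^2\ge\Omega(\ld/n)$). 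The only cosmetic difference is that the paper first fixes WLOG $\mu_q\ge 0$ and shows $\mu_q-r_q\ge -1$ unconditionally before writing a single split $\int_{-1}^{1}+\int_{1}^{\max(1,\mu_q+r_q)}$, whereas you case-split up front on whether the $q$-trimming interval stays inside $[-1,1]$; the content is identical.
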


\begin{proof}
Without loss of generality, assume that $\mu_q \ge 0$.
In the following proof, we will aim to upper bound $\Exp_{X \from q^*}[X^2]$ by $57 ((\sigma_{p^*_n})^2 + (\mu_{p^*_n})^2)$, which implies $\sigma_{q^*_{n/3}} \leq \sqrt{\Exp_{X \from q^*}[X^2]} \le \sqrt{57}\sqrt{(\sigma_{p^*_n})^2 + (\mu_{p^*_n})^2} \le 8(\sigma_{p^*_n} + |\mu_{p^*_n}|)$, yielding the lemma statement.

Before we bound $\Exp_{X \from q^*}[X^2]$, we first show that by the assumption of $\mu_q \ge 0$, it must be the case that $\mu_q - r_q \ge -1$.

Recall that $p$ has a total of $\frac{0.45\ld}{n}$ mass outside $[-1,1]$ by the lemma assumption, and that $\frac{\d q}{\d p} \le 2$.
This implies that $q$ has at most $\frac{0.9\ld}{n}$ outside $[-1,1]$.
Since $\frac{1.35\ld}{n}$ mass is trimmed from $q$ to construct $q^*_{n/3}$, there is thus at least $\frac{0.45\ld}{n}$ mass trimmed from $q$ \emph{inside} the interval $[-1,1]$.

Since $\mu_q \ge \mu_p = 0$, it must be the case that $\mu_q - r_q \ge -1$; otherwise $r_q \ge 1$ and hence $\mu_q + r_q \ge 1$, which would mean that $[\mu_q - r_q,\mu_q + r_q]$ completely contains $[-1,1]$, contradicting the fact that $q$ trims non-zero mass from the interval $[-1,1]$.

We will now start bounding $\Exp_{X \from q^*}[X^2]$.

\begin{align*}
    \Exp_{X \from q^*_{n/3}}[X^2] &= \int_{\mu_q - r_q}^{\mu_q+r_q} x^2 \, \d q^*_{n/3}\\
    &= \frac{1}{1-\frac{1.35\ld}{n}}\int_{\mu_q - r_q}^{\mu_q+r_q} x^2 \, \d q\\
    &\le \frac{2}{1-\frac{1.35\ld}{n}}\int_{\mu_q - r_q}^{\mu_q+r_q} x^2 \, \d p \quad \text{since $\frac{\d q}{\d p} \le 2$}\\
    &\le \frac{2}{1-\frac{1.35\ld}{n}}\int_{-1}^{1} x^2 \, \d p + \frac{2}{1-\frac{1.35\ld}{n}}\int_{1}^{\max(1,\mu_q+r_q)} x^2 \, \d p\\
\end{align*}

We can bound the two integrals separately.
First:
\begin{align*}
    \int_{-1}^{1} x^2 \, \d p &\leq \int_{-1}^{1} x^2 \, \d p^*_n \quad \text{since $p^*_n$ is scaled up after trimming $p$ to renormalize}\\
    &= (\sigma_{p^*_n})^2 + (\mu_{p^*_n})^2
\end{align*}

Second, we will bound $\int_{1}^{\max(1,\mu_q+r_q)} x^2 \, \d p$.
If $\mu_q + r_q \le 1$ then the integral is 0.
Otherwise, we have $\mu_q + r_q > 1$, and we have to bound the above integral, using the following bounds on $r_q$, $\mu_q$ and the second moment of $p^*_n$.

Consider the mass of $q$ that was trimmed from within $[-1,1]$.
Since $\mu_q+r_q > 1$, the (at least $\frac{0.45\ld}{n}$) mass that was trimmed from $q$ must be within $[-1, \mu_q-r_q]$.
Further recall that $\frac{\d q}{\d p }\le 2$, which implies that $p$ has at least $\frac{0.225\ld}{n}$ mass within $[-1, \mu_q-r_q]$.
Since the trimming interval for constructing $p^*_n$ from $p$ is $[-1, 1]$, and $p^*_n$ is constructed from scaling up the trimmed version of $p$, we conclude that $p^*_n$ also has at least $\frac{0.225\ld}{n}$ mass within $[-1, \mu_q-r_q]$.

As we assumed that $\mu_q\leq\frac{1}{4}$ in the lemma statement, we have from $\mu_q+r_q > 1$ that $r_q\geq\frac{3}{4}$.
This furthermore implies that $|\mu_q-r_q| \ge \frac{1}{2}$.
Thus the $\geq\frac{0.225\ld}{n}$ probability mass of $p^*_n$ in $[-1, \mu_q-r_q]$ contributes at least $\frac{1}{2^2}\frac{0.225\ld}{n}$ to the second moment of $p^*_n$; namely $\Exp_{X \from p^*_n}[X^2] \geq \frac{0.225}{4}\frac{\ld}{n}$.

We can also bound $r_q \le \frac{5}{4}$, since $\mu_q-r_q > -1$ and $\mu_q \le \frac{1}{4}$; thus $\mu_q+r_q \le \frac{3}{2}$.

Therefore,
\begin{align*}
    \int_1^{\mu_q+r_q} x^2 \, \d p &\le (\mu_q+r_q)^2 \int_1^{\mu_q+r_q} \d p \\
    &\le \frac{9}{4} \int_1^{\mu_q+r_q} \d p \\
    &\le \frac{9}{4} \cdot \frac{0.45\ld}{n} \quad \text{since $\int_{-1}^1 \d p = 1-\frac{0.45\ld}{n}$}\\
    &\le 18 \Exp_{X \from p^*_n}[X^2] \quad \text{using the prior bound on the second moment}\\
    &= 18 ((\sigma_{p^*_n})^2 + (\mu_{p^*_n})^2)
\end{align*}

Summarizing, when $\frac{\ld}{n}$ is bounded by some sufficiently small absolute constant, we have
\begin{align*}
    \Exp_{X \from q^*_{n/3}}[X^2] &\le \frac{2}{1-\frac{1.35\ld}{n}}\int_{-1}^{1} x^2 \, \d p + \frac{2}{1-\frac{1.35\ld}{n}}\int_{1}^{\max(1,\mu_q+r_q)} x^2 \, \d p\\
    &\le 3 \cdot (1+18) \cdot ((\sigma_{p^*_n})^2 + (\mu_{p^*_n})^2)\\
    &= 57 ((\sigma_{p^*_n})^2 + (\mu_{p^*_n})^2)
\end{align*}
which yields the lemma, by the argument at the very beginning of the proof.

\end{proof}

\begin{lemma}
Consider constructing distribution $q$ from $p$ according to \Cref{def:q}.
Assuming that $\frac{\ld}{n}$ is upper bounded by some sufficiently small absolute constant, then $\eps_{n/3,\delta}(q) \leq 100\eps_{n,\delta}(p)$.

\end{lemma}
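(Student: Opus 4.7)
The plan is to combine \Cref{lem:q-mean-shift-bound} and \Cref{lem:sigma-q-star-bound} almost mechanically; no new construction is required. First I would reduce to the normalized setting $\mu_p = 0$ with the trimming radius for producing $p^*_n$ equal to $1$. This is legitimate because both the error function $\eps_{n,\delta}$ and the construction of $q$ from $p$ in \Cref{def:q} are explicitly shift-and-scale equivariant, so the target ratio $\eps_{n/3,\delta}(q)/\eps_{n,\delta}(p)$ is invariant under affine transformations of $p$. Before invoking the feeder lemmas, I would discharge their hypotheses: $\frac{\d q}{\d p}\le 2$ is established in \Cref{sec:dq/dp}, and $|\mu_q|\le\tfrac14$ is established in \Cref{lem:q-shift-upper-bound-large-case,lem:q-shift-upper-bound-small-case} (one for each case of the construction).

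Next I would expand the target by definition,
\begin{equation*}
\eps_{n/3,\delta}(q) \;=\; |\mu_q - \mu_{q^*_{n/3}}| \;+\; \sigma_{q^*_{n/3}}\sqrt{\tfrac{4.5\ld}{n/3}} \;=\; |\mu_q - \mu_{q^*_{n/3}}| \;+\; \sqrt{3}\,\sigma_{q^*_{n/3}}\sqrt{\tfrac{4.5\ld}{n}},
\end{equation*}
and bound the two summands separately. For the first summand, \Cref{lem:q-mean-shift-bound} gives $|\mu_q - \mu_{q^*_{n/3}}| \le 50\,\eps_{n,\delta}(p)$ directly. For the second summand, \Cref{lem:sigma-q-star-bound} gives $\sigma_{q^*_{n/3}} \le C(\sigma_{p^*_n} + |\mu_{p^*_n} - \mu_p|)$ for a small absolute constant $C$ (its proof actually yields $C \le 8$). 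I would then split the resulting product into two pieces: $\sigma_{p^*_n}\sqrt{4.5\ld/n}$ is already bounded by $\eps_{n,\delta}(p)$, while $|\mu_{p^*_n} - \mu_p|\sqrt{4.5\ld/n}$ is bounded by $|\mu_{p^*_n} - \mu_p| \le \eps_{n,\delta}(p)$, using the standing smallness hypothesis that $\ld/n$ is at most a small absolute constant (so that $\sqrt{4.5\ld/n}\le 1$).

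Adding the three contributions yields $\eps_{n/3,\delta}(q) \le (50 + 2\sqrt{3}\,C)\,\eps_{n,\delta}(p)$, which with $C\le 8$ is well below the target $100\,\eps_{n,\delta}(p)$. The only ``obstacle'' is arithmetic bookkeeping: tracking the constants delivered by the two feeder lemmas and confirming that both cases of \Cref{def:q} satisfy their hypotheses. If the looser constant $C=50$ from the statement of \Cref{lem:sigma-q-star-bound} is used verbatim, one would instead strengthen the smallness assumption on $\ld/n$ to shrink the $\sqrt{3}$-enlargement factor; either way, no new technical ingredient is needed.
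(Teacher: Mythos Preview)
Your proposal is correct and follows the same approach as the paper: expand $\eps_{n/3,\delta}(q)$ by definition, bound the mean-shift term via \Cref{lem:q-mean-shift-bound} and the standard-deviation term via \Cref{lem:sigma-q-star-bound}, then sum. You are in fact slightly more careful than the paper's displayed equality, which writes $\sigma_{q^*_{n/3}}\sqrt{4.5\ld/n}$ and silently drops the $\sqrt{3}$ coming from replacing $n$ by $n/3$; your use of the sharper constant $C\le 8$ from the proof of \Cref{lem:sigma-q-star-bound} is exactly what is needed to keep the final constant below $100$ once that $\sqrt{3}$ is accounted for. (One minor caveat: your parenthetical fallback ``strengthen the smallness assumption on $\ld/n$ to shrink the $\sqrt{3}$-enlargement'' does not work as stated, since $\sqrt{3}$ is absolute and the term $\sqrt{3}\,C\,\sigma_{p^*_n}\sqrt{4.5\ld/n}$ cannot be made smaller relative to $\eps_{n,\delta}(p)$ that way; but this is moot given that the $C\le 8$ route already succeeds.)
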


\begin{proof}
Since the construction of $q$ in \Cref{def:q} satisfies the assumptions of \Cref{lem:sigma-q-star-bound}, this lemma follows directly from summing up the bounds of \Cref{lem:q-mean-shift-bound,lem:sigma-q-star-bound}, that
\[ \eps_{n/3,\delta}(q) = |\mu_q - \mu_{q^*_{n/3}}| + \sigma_{q^*_{n/3}}\sqrt{\frac{4.5 \ld}{n}} \le 50\eps_{n,\delta}(p) + 50(\sigma_{p^*_n} + |\mu_p - \mu_{p^*_n}|)\sqrt{\frac{4.5 \ld}{n}} \le 100 \eps_{n,\delta}(p)\]
where the last inequality uses the definition of $\eps_{n,\delta}(p) = |\mu_p - \mu_{p^*_n}| + \sigma_{p^*_n}\sqrt{\frac{4.5\ld}{n}}$ and that $\frac{\ld}{n}$ is bounded by a sufficiently small constant.
\end{proof}

\end{document}